\tikzset{node distance=3cm, auto}
\tikzset{node distance=3cm, auto}
\newtheorem{theorem}{Theorem}[section]         
\newtheorem{corollary}[theorem]{Corollary}
\newtheorem{proposition}[theorem]{Proposition}
\newtheorem{definition}[theorem]{Definition}
\newtheorem{remark}{Remark}
\def\N{\mathbb{N}}
\def\R{\mathbb{R}}
\def\C{\mathbb{C}}
\def\K{\mathcal{K}}
\def\G{\mathcal{G}}
\def\H{\mathcal{H}}
\def\I{\mathcal{I}}
\def\L{\mathcal{L}}
\def\P{\mathcal{P}}
\def\W{\mathcal{W}}
\def\lin{\mathrm{lin}}
\def\Hv{\H_v^{\infty}}
\def\Gv{\G_{v}}
\begin{document}

\title[On $p$-summability in weighted holomorphic function spaces]{On $p$-summability in weighted Banach spaces \\ of holomorphic functions}

\author[M. G. Cabrera-Padilla]{M. G. Cabrera-Padilla}
\address{Departamento de Matem\'{a}ticas\\ Universidad de Almer\'{i}a\\ 04120 Almer\'{i}a\\ Spain}
\email{m\_gador@hotmail.com}
\thanks{Corresponding author: A. Jim{\'e}nez-Vargas.} 

\author[A. Jim{\'e}nez-Vargas]{A. Jim\'enez-Vargas}
\address{Departamento de Matem\'aticas\\ Universidad de Almer\'ia\\ 04120 Almer\'ia\\ Spain}
\email{ajimenez@ual.es}

\author[A. Keten \c Copur]{A. Keten \c Copur}
\address{Department of Mathematics and Computer Science\\ Faculty of Sciences\\ Necmettin Erbakan University\\ 42090 Konya\\ T\"{u}rkiye}
\email{aketen@erbakan.edu.tr}

\subjclass{40H05, 46A11, 46T25}

\keywords{Weighted holomorphic mappings; Chevet--Saphar norms; $p$-summing operators.}

\date{December 10, 2025}

\begin{abstract}
Given an open subset $U$ of a complex Banach space $E$, a weight $v$ on $U$, and a complex Banach space $F$, let $\Hv(U,F)$ denote the Banach space of all weighted holomorphic mappings $f\colon U\to F$, under the weighted supremum norm $\left\|f\right\|_v:=\sup\left\{v(x)\left\|f(x)\right\|\colon x\in U\right\}$. In this paper, we introduce and study the class $\Pi_p^{\Hv}(U,F)$ of $p$-summing weighted holomorphic mappings. We prove that it is an injective Banach ideal of weighted holomorphic mappings. Variants for weighted holomorphic mappings of Pietsch Domination Theorem, Pietsch Factorization Theorem and Maurey Extrapolation Theorem are presented. We also identify the spaces of $p$-summing weighted holomorphic mappings from $U$ into $F^*$ under the norm $\pi^{\Hv}_p$ with the duals of $F$-valued $\Hv$-molecules on $U$ under a suitable version $d^{\Hv}_{p^*}$ of the Chevet--Saphar tensor norms.
\end{abstract}

\maketitle


\section*{Introduction}

The notion of $p$-summing operators between Banach spaces, introduced by Grothendieck \cite{Gro-55} for $p=1$ and by Pietsch \cite{Pie-67} for any $p>0$, has played an important role in the theory of Banach spaces (see the monographs \cite{Defant-Floret,DisJarTon-95,Pie-80,Rya-02}). 

Its influence has motivated the study of the theory of $p$-summability for different classes of functions in addition to continuous linear operators. This is the case of the research on multilinear operators and polynomials initiated by Pietsch \cite{Pie-84} and continued by Achour and Mezrag \cite{AchMez-07}, Angulo-L\'opez and Fern\'andez-Unzueta \cite{AngFer-20} and Dimant \cite{Dim-03} (see also the survey \cite{PelRuSan-16} by Pellegrino, Rueda and S\'anchez-P\'erez about the $p$-summability in this context); Lipschitz mappings addressed by Farmer and Johnson \cite{FarJoh-09} and Saadi \cite{Saa-15}; and holomorphic mappings studied by Matos \cite{Mat-96} and Pellegrino \cite{Pel-03}.

Let $E$ and $F$ be complex Banach spaces and let $U$ be an open subset of $E$. Let $\H(U,F)$ be the space of all holomorphic mappings from $U$ into $F$. A weight $v$ on $U$ is a (strictly) positive continuous function. The space of weighted holomorphic mappings, denoted by $\Hv(U,F)$, is the Banach space of all mappings $f\in\H(U,F)$ such that   
$$
\left\|f\right\|_v:=\sup\left\{v(x)\left\|f(x)\right\|\colon x\in U\right\}<\infty ,
$$ 
under the weighted supremum norm $\|\cdot\|_v$. In particular, $\Hv(U):=\Hv(U,\C)$. 

Weighted Banach spaces $\Hv(U,F)$ appear in the study of growth conditions of analytic functions and have been addressed in many papers on Complex Analysis, Spectral Theory, Fourier Analysis, Partial Differential Equations and Convolution Equations. We refer to the survey by Bonet \cite{Bon-22} and the references therein for a complete information on weighted analytic functions defined on domains of finite dimension. The study of weighted spaces of
holomorphic functions on infinite-dimensional domains was addressed in \cite{GarMaeRue-00,GarMaeSev-04,GupBaw-16,Jord-13}. A wide range of weights $v$ on $U=\mathbb{D},\mathbb{C}$ that give rise to concrete and outstanding $\Hv(U)$-spaces can be found in \cite{Bon-22}. Moreover, certain conditions on some of these weights are equivalent to the fact that $\Hv(U)$ contains the polynomials. At this point, it is also important to note that there exist various definitions of $p$-summability in the polynomial setting (see \cite{Pel-03,PelRuSan-16}), and some of those constructions are closely related to ours.

We will develop in this paper a theory of $p$-summability for weighted holomorphic mappings. Thus, we introduce the notion of $p$-summing weighted holomorphic mappings and show that it is appropriate to establish variants for the weighted holomorphic setting of some known results of the theory of $p$-summing operators. 

Thus, Section \ref{2} shows that $p$-summing weighted holomorphic mappings enjoy an injective Banach ideal property and satisfy Pietsch Domination Theorem, Factorization Theorem through spaces of continuous functions, Maurey Extrapolation Theorem, and Inclusion and Coincidence Theorem. 

Section \ref{3} contains a tensorial approach to this new class of weighted holomorphic mappings. In this way, we introduce a suitable tensor product space equipped with a Chevet--Saphar type tensor norm in such a way that its dual is isometrically isomorphic to the space of $p$-summing weighted holomorphic mappings.

We will need to fix some notation and recall some facts on weighted holomorphic maps. \\

\textbf{Notation.} As usual, $\mathcal{L}(E,F)$ denotes the Banach space of all bounded linear operators from $E$ into $F$, equipped with the operator canonical norm. In particular, $\mathcal{L}(E,\mathbb{C})$ is denoted by $E^*$. For $x\in E$ and $x^*\in E^*$, we will sometimes write $\langle x^*,x\rangle$ instead of $x^*(x)$. $B_E$ stands for the closed unit ball of $E$. We will write $(E,\|\cdot\|_E)\leq(F,\|\cdot\|_F)$ to indicate that $E\subseteq F$ and $\left\|x\right\|_F\leq\left\|x\right\|_E$ for all $x\in E$. For any $1<p<\infty$, let $p^*=p/(p-1)$ denote the conjugate index of $p$. \\ 


\section{$p$-Summing weighted holomorphic mappings}\label{2}

For any $p\in [1,\infty)$ and Banach spaces $E,F$, an operator $T\in\L(E,F)$ is said to be $p$-summing if there exists a constant $C>0$ such that
$$
\left(\sum_{i=1}^{n}\left\Vert T(x_{i})\right\Vert^{p}\right)^{\frac{1}{p}}
\leq C\sup_{x^*\in B_{E^*}}\left(\sum_{i=1}^{n}\left\vert x^*\left(x_{i}\right)\right\vert^{p}\right)^{\frac{1}{p}},
$$
for all $n\in\mathbb{N}$ and $(x_i)_{i=1}^{n}\subseteq E$. The smallest constant $C$ satisfying the inequality above is denoted by $\pi_p(T)$, and the linear space of all $p$-summing operators from $E$ into $F$ by $\Pi_p(E,F)$. Moreover, $[\Pi_p,\pi_p]$ is a Banach operator ideal. 

We begin by introducing the following variant for weighted holomorphic mappings of the concept of $p$-summing linear operators between Banach spaces. 

From now on, unless otherwise stated, we will suppose that $p\in [1,\infty]$ and $E$, $F$ and $G$ will denote complex Banach spaces, $U$ will be an open subset of $E$, and $v$ will be a weight on $U$. To simplify the notation, when we talk about a weighted holomorphic function, we will understand that it refers with respect to the weight $v$.

\begin{definition}\label{def-p-summing weighted holomorphic}
Given $1\leq p\leq\infty$, a mapping $f\in\H(U,F)$ is said to be $p$-summing weighted holomorphic if there is a constant $C\geq 0$ such that 
$$
\left(\sum_{i=1}^n\left|\lambda_i\right|^pv(x_i)^p\left\|f(x_i)\right\|^p\right)^{\frac{1}{p}}\leq C \sup_{g\in B_{\Hv(U)}}\left(\sum_{i=1}^n\left|\lambda_i\right|^p v(x_i)^p\left|g(x_i)\right|^p\right)^{\frac{1}{p}}
$$
if $1\leq p<\infty$, and 
$$
\max_{1\leq i\leq n}\left|\lambda_i\right|v(x_i)\left\|f(x_i)\right\|\leq C \sup_{g\in B_{\Hv(U)}}\left(\max_{1\leq i\leq n}\left|\lambda_i\right|v(x_i)\left|g(x_i)\right|\right)
$$
if $p=\infty$, for any $n\in\mathbb{N}$, $\lambda_1,\ldots,\lambda_n\in\mathbb{C}$ and $x_1,\ldots,x_n\in U$. We denote by $\pi^{\Hv}_p(f)$ the infimum of all constants $C$ satisfying the inequality above, and by $\Pi^{\Hv}_p(U,F)$ the set of all $p$-summing weighted holomorphic mappings from $U$ into $F$. 
\end{definition}

We now show that every $p$-summing weighted holomorphic mapping is indeed a weighted holomorphic mapping and this justifies the terminology used in Definition \ref{def-p-summing weighted holomorphic}. In addition, we make a comment on the weights $v$ to avoid trivial cases.

\begin{remark}\label{rem-p-summing weighted holomorphic}
For $p\in [1,\infty)$, $(\Pi^{\Hv}_p(U,F),\pi^{\Hv}_p)\leq(\Hv(U,F),\|\cdot\|_v)$. Indeed, given $f\in\Pi^{\Hv}_p(U,F)$, for all $x\in U$, note that    
$$
v(x)\|f(x)\|=(v(x)^p\|f(x)\|^p)^{\frac{1}{p}}\leq\pi_p^{\Hv}(f)\sup_{g\in B_{\Hv(U)}}(v(x)^p\left|g(x)\right|^p)^{\frac{1}{p}}\leq\pi_p^{\Hv}(f).
$$
It is worth recalling that $(\Pi^{\Hv}_\infty(U,F),\pi^{\Hv}_\infty)=(\Hv(U,F),\|\cdot\|_v)$ (see Proposition \ref{prop-1}). In order to avoid the elementary case where the space $\Pi^{\Hv}_p(U,F)$ coincides with $\Hv(U,F)$, we could impose on the weight $v$ some of the conditions: $\sup_{x\in U}v(x)=\infty$ or $\inf_{x\in U}v(x)=0$. Indeed, otherwise, note that there are $N > 0$ and $M > 0$ such that $N<v (x)< M$ for all $x\in U$, hence $N\|\cdot\|_\infty\leq\|\cdot\|_v\leq M\|\cdot\|_\infty$, and thus $\Hv(U,F)=\H^\infty(U,F)$ (the space of bounded holomorphic mappings from $U$ into $F$) with equivalent norms. Moreover, the constant function $g(x)=1/M$ for all $x\in U$,  belongs to $B_{\Hv(U)}$, and then every $f\in\Hv(U,F)$ belongs to $\Pi^{\Hv}_p(U,F)$ since 
\begin{align*}
\sum_{i=1}^n|\lambda_i|^pv(x_i)^p\|f(x_i)\|^p
&\leq M^p\|f\|^p_\infty\sum_{i=1}^n|\lambda_i|^pv(x_i)^p\left(\frac{1}{M}\right)^p\\
&\leq M^p\|f\|^p_\infty\sup_{g\in B_{\Hv(U)}}\sum_{i=1}^n|\lambda_i|^pv(x_i)^p|g(x_i)|^p,
\end{align*}
for any $n\in\mathbb{N}$, $\lambda_1,\ldots,\lambda_n\in\mathbb{C}$ and $x_1,\ldots,x_n\in U$. 
\end{remark}

Our main purpose in this paper is to study the spaces of Banach-valued $p$-summing weighted holomorphic mappings. We will go presenting their main properties in different subsections. 


\subsection{Linearization}

Following \cite{BieSum-93,BonDomLin-01,GupBaw-16}, $\Gv(U)$ is the space of all linear functionals on $\Hv(U)$ whose restriction to $B_{\Hv(U)}$ is continuous for the compact-open topology. 

The following result gathers some facts concerning the linearization of weighted holomorphic mappings that will applied through the paper.

\begin{theorem}\label{t0}\cite{BieSum-93,BonDomLin-01,GupBaw-16} 
Let $U$ be an open set of a complex Banach space $E$ and let $v$ be a weight on $U$.
\begin{enumerate}
\item[(i)]\label{ap(i)} $\Gv(U)$ is a closed subspace of $\Hv(U)^*$, and the mapping $J_v\colon\Hv(U)\to\Gv(U)^*$, given by $J_v(g)(\phi)=\phi(g)$ for $\phi\in\Gv(U)$ and $g\in\Hv(U)$, is an isometric isomorphism.
\item[(ii)]\label{ap(ii)} For each $x\in U$, the functional $\delta_x\colon\Hv(U)\to\mathbb{C}$, defined by $\delta_x(g)=g(x)$ for $g\in\Hv(U)$, is in $\Gv(U)$, and there exists $g_x\in B_{\Hv(U)}$ such that $g_x(x)=\left\|\delta_x\right\|:=\sup_{g\in B_{\Hv(U)}}\left|g(x)\right|$.
\item[(iii)]\label{ap(iii)} The mapping $\Delta_v\colon U\to\Gv(U)$ given by $\Delta_v(x)=\delta_x$ is in $\Hv(U,\Gv(U))$ with $\left\|\Delta_v\right\|_v\leq 1$.
\item[(iv)]\label{ap(vi)} For every complex Banach space $F$ and every mapping $f\in\Hv(U,F)$, there exists a unique operator $T_f\in\L(\Gv(U),F)$ such that $T_f\circ\Delta_v=f$. Furthermore, $\left\|T_f\right\|=\left\|f\right\|_v$. 
$\hfill\qed$
\end{enumerate}
\end{theorem}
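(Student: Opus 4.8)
The plan is to reduce all four assertions to one structural input: the closed unit ball $B_{\Hv(U)}$ is compact for the compact-open topology $\tau_{co}$. This is a normal-families fact, and I would recall the argument: since $v$ is continuous and strictly positive, every point of $U$ has a neighbourhood on which $v$ is bounded below by a positive constant, so $B_{\Hv(U)}$ is a locally bounded family of holomorphic functions; it is therefore $\tau_{co}$-relatively compact (Montel) and $\tau_{co}$-closed, since $\|g\|_v\le1$ is preserved under $\tau_{co}$-limits (which are again holomorphic). Any linear functional that is $\tau_{co}$-continuous on this compact set is automatically bounded on it, so $\Gv(U)\subseteq\Hv(U)^*$, and $\Gv(U)$ is norm-closed because a norm-limit of such functionals is a uniform-on-$B_{\Hv(U)}$ limit of $\tau_{co}$-continuous functions, hence $\tau_{co}$-continuous on $B_{\Hv(U)}$. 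Assertion (i) is then precisely Ng's version of the Dixmier theorem: if the unit ball of a normed space $X$ is compact for a Hausdorff locally convex topology $\tau$ and $X_*$ is the space of functionals that are $\tau$-continuous on $B_X$, then the canonical evaluation $X\to(X_*)^*$ is an isometric isomorphism; I apply it with $X=\Hv(U)$, $\tau=\tau_{co}$ (which is Hausdorff and locally convex, being uniform convergence on compacta), and $X_*=\Gv(U)$, so that the resulting map is exactly $J_v$.

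For (ii), $\delta_x$ is linear, and $\tau_{co}$-convergence forces pointwise convergence, so $\delta_x$ is $\tau_{co}$-continuous on $B_{\Hv(U)}$ and hence $\delta_x\in\Gv(U)$. As $g\mapsto|g(x)|$ is $\tau_{co}$-continuous and $B_{\Hv(U)}$ is $\tau_{co}$-compact, the supremum defining $\|\delta_x\|$ is attained at some $g_0\in B_{\Hv(U)}$; multiplying $g_0$ by a suitable unimodular scalar yields $g_x\in B_{\Hv(U)}$ with $g_x(x)=\|\delta_x\|$. For (iii), the estimate $v(x)\|\delta_x\|=\sup_{g\in B_{\Hv(U)}}v(x)|g(x)|\le1$ gives $\|\Delta_v\|_v\le1$, and in particular $\Delta_v$ is locally bounded because $1/v$ is. Holomorphy of $\Delta_v$ then follows from the weak-holomorphy principle: given $\psi\in\Gv(U)^*$, write $\psi=J_v(h)$ with $h\in\Hv(U)$ using (i), so that $(\psi\circ\Delta_v)(x)=\delta_x(h)=h(x)$ is holomorphic on $U$; a locally bounded, weakly holomorphic map with values in a Banach space is holomorphic, so $\Delta_v\in\Hv(U,\Gv(U))$.

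For (iv), I would define $T_f$ on $\lin\{\delta_x:x\in U\}$ by $T_f(\sum_i\lambda_i\delta_{x_i})=\sum_i\lambda_i f(x_i)$. Both well-definedness and the bound $\|T_f\|\le\|f\|_v$ come from testing against $y^*\in B_{F^*}$: since $y^*\circ f\in\Hv(U)$ with $\|y^*\circ f\|_v\le\|f\|_v$, one has $|y^*(\sum_i\lambda_i f(x_i))|=|(\sum_i\lambda_i\delta_{x_i})(y^*\circ f)|\le\|\sum_i\lambda_i\delta_{x_i}\|\,\|f\|_v$, and taking the supremum over such $y^*$ (Hahn--Banach) gives $\|\sum_i\lambda_i f(x_i)\|\le\|\sum_i\lambda_i\delta_{x_i}\|\,\|f\|_v$; in particular $\sum_i\lambda_i f(x_i)=0$ whenever $\sum_i\lambda_i\delta_{x_i}=0$. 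The span of the $\delta_x$ is dense in $\Gv(U)$, because an $h\in\Hv(U)$, identified with an element of $\Gv(U)^*$ via (i), that annihilates every $\delta_x$ satisfies $h(x)=0$ for all $x$; hence $T_f$ extends uniquely to $T_f\in\L(\Gv(U),F)$ with $\|T_f\|\le\|f\|_v$ and $T_f\circ\Delta_v=f$, and the uniqueness of any operator realizing this factorization follows from the same density. Finally $v(x)\|f(x)\|=v(x)\|T_f(\delta_x)\|\le\|T_f\|\,v(x)\|\delta_x\|\le\|T_f\|$ for all $x\in U$ yields $\|f\|_v\le\|T_f\|$, so $\|T_f\|=\|f\|_v$.

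The only genuinely non-elementary ingredient here is the $\tau_{co}$-compactness of $B_{\Hv(U)}$ together with the precise hypotheses of Ng's theorem; once (i) holds, parts (ii)--(iv) are routine manipulations with Hahn--Banach, the weak-holomorphy principle, and the density of $\lin\{\delta_x\}$ in $\Gv(U)$. Minor points to verify carefully are that $\tau_{co}$ is Hausdorff and locally convex on $\Hv(U)$, that the rotation argument in (ii) is legitimate, and that continuity and strict positivity of $v$ really give the local boundedness used both for Montel's theorem and for the holomorphy of $\Delta_v$.
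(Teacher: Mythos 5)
Your proof is correct and is essentially the argument behind the sources the paper cites for this theorem (the paper itself gives no proof, only the references \cite{BieSum-93,BonDomLin-01,GupBaw-16}): $\tau_{co}$-compactness of $B_{\Hv(U)}$ together with the Dixmier--Ng theorem gives (i), and (ii)--(iv) then follow exactly as you describe via evaluation functionals, weak holomorphy of $\Delta_v$, and density of $\lin\{\delta_x\colon x\in U\}$ in $\Gv(U)$. The one step to phrase carefully is the compactness claim: for infinite-dimensional $E$ you cannot invoke the classical Montel theorem verbatim, but local boundedness plus Cauchy estimates gives local equi-Lipschitz continuity, so Ascoli yields $\tau_{co}$-relative compactness, and $\tau_{co}$-limits remain in $B_{\Hv(U)}$ because they are locally bounded and Gateaux-holomorphic, hence holomorphic, with the weighted bound preserved under pointwise limits.
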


We establish the following relation of the $p$-summability of a weighted holomorphic mapping and its linearization.

\begin{theorem}\label{t1.2}
Let $f\in\Hv(U,F)$ and $p\in [1,\infty)$. If the linear operator $T_f\colon\Gv(U)\to F$ is $p$-summing, then $f$ is $p$-summing weighted holomorphic and $\pi_p^{\Hv}(f)\leq\pi_p(T_f)$.
\end{theorem}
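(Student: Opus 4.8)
The plan is to unwind both definitions and exploit the factorization $f = T_f \circ \Delta_v$ provided by Theorem \ref{t0}(iv). Fix $n\in\mathbb{N}$, scalars $\lambda_1,\dots,\lambda_n\in\mathbb{C}$ and points $x_1,\dots,x_n\in U$. Since $\Delta_v(x_i) = \delta_{x_i}$, we have $f(x_i) = T_f(\delta_{x_i})$, so the left-hand side of the $p$-summing inequality in Definition \ref{def-p-summing weighted holomorphic} becomes
$$
\left(\sum_{i=1}^n |\lambda_i|^p v(x_i)^p \|f(x_i)\|^p\right)^{1/p} = \left(\sum_{i=1}^n \|T_f(\lambda_i v(x_i)\delta_{x_i})\|^p\right)^{1/p},
$$
where I have absorbed the scalar $\lambda_i v(x_i)$ into the argument of the linear operator $T_f$. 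Now apply the hypothesis that $T_f$ is $p$-summing to the vectors $y_i := \lambda_i v(x_i)\delta_{x_i}\in\Gv(U)$: this bounds the above by $\pi_p(T_f)$ times the weak-$\ell_p$ norm $\sup_{\phi\in B_{\Gv(U)^*}}\left(\sum_{i=1}^n |\langle\phi, y_i\rangle|^p\right)^{1/p}$.

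The remaining step is to identify this weak-$\ell_p$ norm with the right-hand side of Definition \ref{def-p-summing weighted holomorphic}, namely $\sup_{g\in B_{\Hv(U)}}\left(\sum_{i=1}^n |\lambda_i|^p v(x_i)^p |g(x_i)|^p\right)^{1/p}$. This is where Theorem \ref{t0}(i) does the work: the isometric isomorphism $J_v\colon\Hv(U)\to\Gv(U)^*$ sends $B_{\Hv(U)}$ onto $B_{\Gv(U)^*}$, and for $g\in\Hv(U)$ one has $\langle J_v(g), \delta_{x_i}\rangle = \delta_{x_i}(g) = g(x_i)$. Hence
$$
\sup_{\phi\in B_{\Gv(U)^*}}\left(\sum_{i=1}^n |\langle\phi, y_i\rangle|^p\right)^{1/p} = \sup_{g\in B_{\Hv(U)}}\left(\sum_{i=1}^n |\lambda_i|^p v(x_i)^p |g(x_i)|^p\right)^{1/p},
$$
using $|\langle J_v(g), \lambda_i v(x_i)\delta_{x_i}\rangle| = |\lambda_i| v(x_i) |g(x_i)|$. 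Combining the two displays gives exactly the defining inequality for $f$ being $p$-summing weighted holomorphic with constant $\pi_p(T_f)$, and taking the infimum over admissible constants yields $\pi_p^{\Hv}(f)\leq\pi_p(T_f)$.

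I do not expect a serious obstacle here; the argument is essentially bookkeeping once the linearization theorem is in hand. The only point requiring a little care is making sure that $J_v$ actually maps $B_{\Hv(U)}$ \emph{onto} $B_{\Gv(U)^*}$ (not merely into it), but this is immediate from $J_v$ being a surjective isometry as stated in Theorem \ref{t0}(i). One could alternatively phrase the whole proof without naming $J_v$ explicitly, simply noting that every $\phi\in B_{\Gv(U)^*}$ is of the form $\phi = J_v(g)$ for a unique $g\in B_{\Hv(U)}$ and that $\phi(\delta_x) = g(x)$; either way the computation is the same. The case $p=\infty$ is not needed since the statement restricts to $p\in[1,\infty)$, though the identical argument with maxima replacing sums would handle it as well.
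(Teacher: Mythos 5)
Your argument is correct and coincides with the paper's own proof: both factor $f=T_f\circ\Delta_v$, apply the $p$-summing inequality of $T_f$ to the vectors $\lambda_i v(x_i)\delta_{x_i}$, and then use the surjective isometry $J_v\colon\Hv(U)\to\Gv(U)^*$ from Theorem \ref{t0}(i) to rewrite the supremum over $B_{\Gv(U)^*}$ as the supremum over $B_{\Hv(U)}$. No gaps; the point you flag about $J_v$ mapping $B_{\Hv(U)}$ onto $B_{\Gv(U)^*}$ is exactly the identification the paper uses implicitly.
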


\begin{proof}
If $T_f\in\Pi_p(\Gv(U),F)$, then 
$$
\left(\sum_{i=1}^n\left\|T_f(\phi_i)\right\|^p\right)^{\frac{1}{p}}
\leq \pi_p(T_f)\sup_{S\in B_{\Gv(U)^*}}\left(\sum_{i=1}^n\left|S(\phi_i)\right|^p\right)^{\frac{1}{p}}
$$
for any $n$ in $\N$ and $\phi_1,\ldots,\phi_n$ in $\Gv(U)$. In particular, we have
\begin{align*}
\left(\sum_{i=1}^n\left|\lambda_i\right|^pv(x_i)^p\left\|f(x_i)\right\|^p\right)^{\frac{1}{p}}
&=\left(\sum_{i=1}^n\left\|T_f(\lambda_iv(x_i)\delta_{x_i})\right\|^p\right)^{\frac{1}{p}}\\
&\leq\pi_p(T_f)\sup_{S\in B_{\Gv(U)^*}}\left(\sum_{i=1}^n\left|S(\lambda_iv(x_i)\delta_{x_i})\right|^p\right)^{\frac{1}{p}}\\
&=\pi_p(T_f)\sup_{g\in B_{\Hv(U)}}\left(\sum_{i=1}^n\left|\lambda_i\right|^pv(x_i)^p\left|J_v(g)(\delta_{x_i})\right|^p\right)^{\frac{1}{p}}\\
&=\pi_p(T_f)\sup_{g\in B_{\Hv(U)}}\left(\sum_{i=1}^n\left|\lambda_i\right|^pv(x_i)^p\left|g(x_i)\right|^p\right)^{\frac{1}{p}}
\end{align*}
for any $n\in\mathbb{N}$, $\lambda_1,\ldots,\lambda_n\in\mathbb{C}$ and $x_1,\ldots,x_n\in U$. Consequently, $f\in\Pi^{\Hv}_p(U,F)$ and $\pi_p^{\Hv}(f)\leq\pi_p(T_f)$.
\end{proof}

From Theorem \ref{t1.2} we deduce a result that will be applied later.

\begin{corollary}\label{new corollary}
Let $p\in [1,\infty)$. If $f\colon U\to F$ is a weighted holomorphic mapping and $S\colon F\to G$ is a $p$-summing linear operator, then $S\circ f\colon U\to G$ is $p$-summing weighted holomorphic and $\pi^{\Hv}_p(S\circ f)\leq\pi_p(S)\|f\|_v$.
\end{corollary}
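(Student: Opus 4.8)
The plan is to reduce the statement to the ideal property of $p$-summing operators by passing through the linearization furnished by Theorem \ref{t0}. First I would invoke Theorem \ref{t0}(iv) to write $f=T_f\circ\Delta_v$ with $T_f\in\L(\Gv(U),F)$ and $\|T_f\|=\|f\|_v$. Composing on the left with $S$ gives $S\circ f=(S\circ T_f)\circ\Delta_v$; in particular $S\circ f$ is holomorphic (a bounded linear map composed with a holomorphic map), and for every $x\in U$ one has $v(x)\|(S\circ f)(x)\|\le\|S\|\,v(x)\|f(x)\|\le\|S\|\,\|f\|_v$, so that $S\circ f\in\Hv(U,G)$ and it makes sense to speak of its linearization.

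Next, since $[\Pi_p,\pi_p]$ is a Banach operator ideal, the operator $S\circ T_f\colon\Gv(U)\to G$ is $p$-summing with $\pi_p(S\circ T_f)\le\pi_p(S)\,\|T_f\|=\pi_p(S)\,\|f\|_v$. On the other hand, by the uniqueness clause in Theorem \ref{t0}(iv) applied to $S\circ f\in\Hv(U,G)$, the operator $S\circ T_f$ must coincide with the linearization $T_{S\circ f}$, because it is a bounded linear operator on $\Gv(U)$ satisfying $(S\circ T_f)\circ\Delta_v=S\circ f$.

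Finally I would apply Theorem \ref{t1.2} to $S\circ f$: its linearization $T_{S\circ f}=S\circ T_f$ is $p$-summing, hence $S\circ f$ is $p$-summing weighted holomorphic and
$$
\pi^{\Hv}_p(S\circ f)\le\pi_p\big(T_{S\circ f}\big)=\pi_p(S\circ T_f)\le\pi_p(S)\,\|f\|_v,
$$
which is exactly the asserted inequality. I do not expect any real obstacle here: the argument is a bookkeeping composition of three already-established facts (the ideal property of $\Pi_p$, the linearization theorem, and Theorem \ref{t1.2}). The only points requiring a line of care are verifying that $S\circ f$ genuinely lies in $\Hv(U,G)$ before invoking its linearization, and correctly identifying $T_{S\circ f}$ with $S\circ T_f$ via the uniqueness part of Theorem \ref{t0}(iv).
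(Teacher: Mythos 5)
Your proposal is correct and follows essentially the same route as the paper's proof: identify $T_{S\circ f}=S\circ T_f$, apply the ideal property of $[\Pi_p,\pi_p]$ together with $\|T_f\|=\|f\|_v$, and conclude via Theorem \ref{t1.2}. Your extra remark verifying $S\circ f\in\Hv(U,G)$ before invoking the linearization is a harmless (and welcome) addition that the paper leaves implicit.
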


\begin{proof}
Assume that $f\in\Hv(U,F)$ and $S\in\Pi_p(F,G)$. Since $T_{S\circ f}=S\circ T_f$ with $\left\|T_f\right\|=\|f\|_v$ by Theorem \ref{t0}, then $T_{S\circ f}\in\Pi_p(\Gv(U),G)$ with $\pi_p(T_{S\circ f})\leq\pi_p(S)\left\|T_f\right\|$ by the ideal property of $[\Pi_p,\pi_p]$ (see \cite[Theorem 2.4]{DisJarTon-95}). Hence $S\circ f\in\Pi_p^{\Hv}(U,G)$ and $\pi^{\Hv}_p(S\circ f)\leq\pi_p(T_{S\circ f})\leq\pi_p(S)\|f\|_v$ by Theorem \ref{t1.2}.
\end{proof}

\subsection{Injective Banach ideal property}

We first prove that the set of $p$-summing weighted holomorphic mappings enjoys an injective Banach ideal structure. 

Let us recall from \cite{CabJimAys-24} that a Banach ideal of weighted holomorphic mappings (in short, a Banach weighted holomorphic ideal) is an assignment $\left[\I^{\Hv},\|\cdot\|_{\I^{\Hv}}\right]$ which associates with every pair $(U,F)$, where $U$ is an open subset of a complex Banach space $E$ and $F$ is a complex Banach space, a set $\I^{\Hv}(U,F)\subseteq\Hv(U,F)$ and a function $\|\cdot\|_{\I^{\Hv}}\colon \I^{\Hv}(U,F)\to\mathbb{R}^+_0$ satisfying the properties:
\begin{itemize}
\item[(P1)] $\left(\I^{\Hv}(U,F),\|\cdot\|_{\I^{\Hv}}\right)$ is a Banach space with $\|f\|_{\I^{\Hv}}\geq \|f\|_v$ for all $f\in\I^{\Hv}(U,F)$.
\item[(P2)] For any $h\in\Hv(U)$ and $y\in F$, the map $h\cdot y\colon x\mapsto h(x)y$ from $U$ to $F$ is in $\I^{\Hv}(U,F)$ with $\|h\cdot y\|_{\I^{\Hv}}=\|h\|_v\|y\|$.
\item[(P3)] The ideal property: if $V$ is an open subset of $E$ such that $V\subseteq U$, $h\in\mathcal{H}(V,U)$ with $c_v(h):=\sup_{x\in V}(v(x)/v(h(x)))<\infty$
, $f\in \I^{\Hv}(U,F)$ and $T\in\L(F,G)$, where $G$ is a complex Banach space, then $T\circ f\circ h\in\I^{\Hv}(V,G)$ with $\|T\circ f\circ h\|_{\I^{\Hv}}\leq \|T\|\|f\|_{\I^{\Hv}}c_v(h)$. 
\end{itemize}

A Banach weighted holomorphic ideal $[\I^{\Hv},\|\cdot\|_{\I^{\Hv}}]$ is called:
\begin{itemize}
\item[(I)] Injective if for any mapping $f\in\Hv(U,F)$, any complex Banach space $G$ and any isometric linear embedding $\iota\colon F\to G$, we have that $f\in\I^{\Hv}(U,F)$ with $\left\|f\right\|_{\I^{\Hv}}=\left\|\iota\circ f\right\|_{\I^{\Hv}}$ whenever $\iota\circ f\in\I^{\Hv}(U,G)$.
\end{itemize}

\begin{proposition}\label{ideal summing} 
$[\Pi^{\Hv}_p,\pi^{\Hv}_p]$ is an injective Banach weighted holomorphic ideal.
\end{proposition}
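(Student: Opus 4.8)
The plan is to verify properties (P1)–(P3) and injectivity (I) in turn, leaning on the linearization machinery of Theorem \ref{t0} and the known fact that $[\Pi_p,\pi_p]$ is a Banach operator ideal. The conceptual shortcut is this: by Theorem \ref{t1.2} and its converse direction (which should be checked but is essentially the same computation run backwards, using that $J_v$ is an isometric isomorphism so that every $F\in B_{\Gv(U)^*}$ corresponds to some $g\in B_{\Hv(U)}$ and, more importantly, that the relevant functionals on $\Gv(U)$ can be tested on the total set $\{\lambda_i v(x_i)\delta_{x_i}\}$ via Theorem \ref{t0}(ii)), one expects $f\in\Pi_p^{\Hv}(U,F)$ if and only if $T_f\in\Pi_p(\Gv(U),F)$, with $\pi_p^{\Hv}(f)=\pi_p(T_f)$. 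Once that identification is in hand, each axiom is inherited from the corresponding property of the operator ideal $[\Pi_p,\pi_p]$.

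First I would establish the isometric identification $\pi_p^{\Hv}(f)=\pi_p(T_f)$ for $p<\infty$ (the case $p=\infty$ being handled by Proposition \ref{prop-1} together with Theorem \ref{t0}(iv)); the key point for the nontrivial inequality $\pi_p(T_f)\le\pi_p^{\Hv}(f)$ is that, by Theorem \ref{t0}(i), computing $\pi_p(T_f)$ one may replace the supremum over $B_{\Gv(U)^*}$ by the supremum over $\{J_v(g):g\in B_{\Hv(U)}\}$, and that the elements $\lambda_i v(x_i)\delta_{x_i}$ are dense enough in $\Gv(U)$ — indeed by Theorem \ref{t0}(ii) the functionals $\delta_x$ lie in $\Gv(U)$ and a standard approximation/weak$^*$-density argument (the closed absolutely convex hull of $\{v(x)\delta_x:x\in U\}$ is the unit ball of $\Gv(U)$, by the isometry $J_v$ and bipolar considerations) reduces the defining inequality for $p$-summing to the special vectors appearing in Definition \ref{def-p-summing weighted holomorphic}. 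With the identification settled: (P1) follows because $\Pi_p(\Gv(U),F)$ is a Banach space and $\pi_p(T_f)\ge\|T_f\|=\|f\|_v$ by Theorem \ref{t0}(iv), and completeness transfers since $f\mapsto T_f$ is a linear isometry onto $\Pi_p(\Gv(U),F)$ which is norm-closed. For (P2), note $T_{h\cdot y}$ is the rank-one operator $\phi\mapsto J_v(h)(\phi)\,y$, which is $p$-summing with $\pi_p=\|J_v(h)\|\|y\|=\|h\|_v\|y\|$. For (P3), one checks $T_{T\circ f\circ h}=T\circ T_f\circ (\text{composition operator } C_h)$ where $C_h\colon\Gv(V)\to\Gv(U)$ is the linearization of $\Delta_v\circ h\in\Hv(V,\Gv(U))$ with $\|C_h\|\le c_v(h)$; then the operator-ideal inequality $\pi_p(T\circ T_f\circ C_h)\le\|T\|\,\pi_p(T_f)\,\|C_h\|$ gives exactly the required bound. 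Finally, injectivity (I): if $\iota\colon F\to G$ is an isometric embedding and $\iota\circ f\in\Pi_p^{\Hv}(U,G)$, then $T_{\iota\circ f}=\iota\circ T_f\in\Pi_p(\Gv(U),G)$, and injectivity of the operator ideal $[\Pi_p,\pi_p]$ yields $T_f\in\Pi_p(\Gv(U),F)$ with $\pi_p(T_f)=\pi_p(\iota\circ T_f)$; translating back, $f\in\Pi_p^{\Hv}(U,F)$ with $\pi_p^{\Hv}(f)=\pi_p^{\Hv}(\iota\circ f)$.

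The main obstacle I anticipate is the reverse inequality in the identification $\pi_p^{\Hv}(f)=\pi_p(T_f)$ — i.e., showing that testing only on vectors of the form $\sum\lambda_i v(x_i)\delta_{x_i}$ suffices to bound $\pi_p(T_f)$. One cannot merely quote Theorem \ref{t1.2}; one needs that the set $\{v(x)\delta_x:x\in U\}$ has closed absolutely convex hull equal to $B_{\Gv(U)}$. This is where the structure of $\Gv(U)$ as the predual (via $J_v$) and the weak$^*$-to-compact-open continuity built into the definition of $\Gv(U)$ enter; I would argue by the Hahn–Banach separation theorem that any $\phi\in\Gv(U)$ outside that hull would be separated by some $g\in\Hv(U)$ with $\|g\|_v\le 1$ yet $|\phi(g)|>1$, contradicting $\|\phi\|\le1$ under $J_v$. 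If a fully clean proof of this density is awkward, an acceptable fallback is to prove only the inequalities actually needed for (P1)–(P3) and (I) directly from Definition \ref{def-p-summing weighted holomorphic} together with Corollary \ref{new corollary} and Theorem \ref{t1.2}, at the cost of a few extra lines of bookkeeping but avoiding the density lemma entirely.
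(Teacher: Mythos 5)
Your plan hinges on the isometric identification $\pi^{\Hv}_p(f)=\pi_p(T_f)$, and this is where the proposal breaks down. The paper's Theorem \ref{t1.2} gives only one direction ($T_f\in\Pi_p(\Gv(U),F)\Rightarrow f\in\Pi^{\Hv}_p(U,F)$ with $\pi^{\Hv}_p(f)\leq\pi_p(T_f)$), and the converse is not established by your bipolar argument. What the bipolar theorem gives is correct: the closed absolutely convex hull of $\{v(x)\delta_x\colon x\in U\}$ is $B_{\Gv(U)}$, since this set norms $\Hv(U)=\Gv(U)^*$ via $J_v$. But $p$-summability of $T_f$ requires the inequality for \emph{arbitrary} finite families $(\phi_i)\subseteq\Gv(U)$ (equivalently, for arbitrary finite linear combinations of point evaluations, which span a dense subspace), whereas Definition \ref{def-p-summing weighted holomorphic} tests it only on scalar multiples of single evaluations $\lambda_i v(x_i)\delta_{x_i}$. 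Passing from the second to the first is exactly the hard step, and density of convex combinations does not do it: writing each $\phi_i$ as (a limit of) absolutely convex combinations of the $v(x)\delta_x$, the left-hand side $\bigl(\sum_i\|T_f\phi_i\|^p\bigr)^{1/p}$ can be expanded by the triangle inequality, but the resulting family of scaled deltas has a weak-$\ell_p$ quantity $\sup_{g\in B_{\Hv(U)}}\bigl(\sum|\cdot|^p\bigr)^{1/p}$ that is not controlled by the one for $(\phi_i)$; the estimate runs in the wrong direction and the lengths of the combinations are unbounded. The same obstruction is well known in the Lipschitz setting (a Lipschitz $p$-summing map need not have $p$-summing linearization), and the paper itself signals it: in Theorem \ref{Pietsch2} the authors explicitly obtain only a $C(K)$-factorization, not the full Pietsch factorization, which would be automatic if $f\in\Pi^{\Hv}_p$ forced $T_f\in\Pi_p$. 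Since (P1) (completeness), (P3) and (I) in your scheme all use the unproven implication $f\in\Pi^{\Hv}_p\Rightarrow T_f\in\Pi_p$, the main route fails.

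Your fallback remark — verify (P1)--(P3) and (I) directly from Definition \ref{def-p-summing weighted holomorphic}, with Theorem \ref{t1.2} and Corollary \ref{new corollary} used only where they legitimately apply — is indeed how the paper proceeds (triangle/H\"older estimates for the norm axioms and completeness, a direct computation for $h\cdot y$, the change-of-variable estimate $\|g\circ h\|_v\leq c_v(h)\|g\|_v$ for (P3), and the isometry $\|(\iota\circ f)(x)\|=\|f(x)\|$ for (I)); but that argument is only gestured at, not carried out, so as written the proof is incomplete.
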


\begin{proof}
We will prove the case $1\leq p<\infty$, and for $p=\infty$ a similar proof works.

(P1) Let $n\in\mathbb{N}$, $\lambda_1,\ldots,\lambda_n\in\mathbb{C}$ and $x_1,\ldots,x_n\in U$. If $f\in\Pi^{\Hv}_p(U,F)$, then $\|f\|_v\leq\pi^{\Hv}_p(f)$ by Remark \ref{rem-p-summing weighted holomorphic}. If $\pi^{\Hv}_p(f)=0$, then $\|f\|_v=0$ and so $f=0$. Given $f_1,f_2\in\Pi^{\Hv}_p(U,F)$, one has 
\begin{align*}
\left(\sum_{i=1}^n\left|\lambda_i\right|^pv(x_i)^p\left\|(f_1+f_2)(x_i)\right\|^p\right)^{\frac{1}{p}}
&\leq\left(\sum_{i=1}^n\left(\left|\lambda_i\right|v(x_i)\left\|f_1(x_i)\right\|+\left|\lambda_i\right|v(x_i)\left\|f_2(x_i)\right\|\right)^p\right)^{\frac{1}{p}}\\
&\leq  \left(\sum_{i=1}^n \left|\lambda_i\right|^p v(x_i)^p \|f_1(x_i)\|^p\right)^{\frac{1}{p}}+ \left(\sum_{i=1}^n \left|\lambda_i\right|^p v(x_i)^p \|f_2(x_i)\|^p\right)^{\frac{1}{p}}\\
&\leq \left(\pi_p^{\Hv}(f_1)+\pi_p^{\Hv}(f_2)\right)\sup_{g\in B_{\Hv(U)}}\left(\sum_{i=1}^n \left|\lambda_i\right|^p v(x_i)^p \left|g(x_i)\right|^p\right)^{\frac{1}{p}},
\end{align*}
and thus $f_1+f_2\in\Pi^{\Hv}_p(U,F)$ with $\pi^{\Hv}_p(f_1+f_2)\leq\pi^{\Hv}_p(f_1)+\pi^{\Hv}_p(f_2)$.

Let $\lambda\in\mathbb{C}$ and $f\in\Pi^{\Hv}_p(U,F)$. An easy calculation yields 
\begin{align*}
\left(\sum_{i=1}^n\left|\lambda_i\right|^pv(x_i)^p\left\|(\lambda f)(x_i)\right\|^p\right)^{\frac{1}{p}}
&=\left|\lambda\right|\left(\sum_{i=1}^n\left|\lambda_i\right|^p v(x_i)^p\left\|f(x_i)\right\|^p\right)^{\frac{1}{p}}\\
&\leq\left|\lambda\right|\pi^{\Hv}_p(f)\sup_{g\in B_{\Hv(U)}}\left(\sum_{i=1}^n\left|\lambda_i\right|^pv(x_i)^p\left|g(x_i)\right|^p\right)^{\frac{1}{p}},
\end{align*}
and so $\lambda f\in\Pi^{\Hv}_p(U,F)$ with $\pi^{\Hv}_p(\lambda f)\leq|\lambda|\pi^{\Hv}_p(f)$. Hence $\pi^{\Hv}_p(\lambda f)=0=\left|\lambda\right|\pi^{\Hv}_p(f)$ if $\lambda=0$. For $\lambda\neq 0$, it is clear that $\pi^{\Hv}_p(f)=\pi^{\Hv}_p(\lambda^{-1}(\lambda f))\leq\left|\lambda\right|^{-1}\pi^{\Hv}_p(\lambda f)$, hence $\left|\lambda\right|\pi^{\Hv}_p(f)\leq\pi^{\Hv}_p(\lambda f)$, and, consequently, $\pi^{\Hv}_p(\lambda f)=\left|\lambda\right|\pi^{\Hv}_p(f)$. In this way, $\left(\Pi^{\Hv}_p(U,F),\pi^{\Hv}_p\right)$ is a normed space.

To show that it is a Banach space, let $(f_n)_{n\geq 1}$ be a sequence in $\Pi^{\Hv}_p(U,F)$ such that $\sum_{n\geq 1}\pi^{\Hv}_p(f_n)$ converges. Since $\|f_n\|_v\leq\pi^{\Hv}_p(f_n)$ for all $n\in\mathbb{N}$ and $\left(\Hv(U,F),\|\cdot\|_v\right)$ is a Banach space, then the series $\sum_{n\geq 1} f_n$ converges in $\left(\Hv(U,F),\|\cdot\|_v\right)$ to a function $f\in\Hv(U,F)$. This implies that $\sum_{n\geq 1} f_n$ converges pointwise on $U$ to $f$ because 
$$
v(x)\left\|\sum_{i=1}^nf_i(x)-f(x)\right\|\leq \left\|\sum_{i=1}^nf_i-f\right\|_v
$$
for each $x\in U$ and for all $n\in\mathbb{N}$. Given $m\in\mathbb{N}$, $x_1,\ldots,x_m\in U$ and $\lambda_1,\ldots,\lambda_m\in\mathbb{C}$, we have 
\begin{align*}
\left(\sum_{k=1}^m\left|\lambda_k\right|^pv(x_k)^p\left\|\sum_{i=1}^nf_i(x_k)\right\|^p\right)^{\frac{1}{p}}
&\leq \pi^{\Hv}_p\left(\sum_{i=1}^nf_i\right)\sup_{g\in B_{\Hv(U)}}\left(\sum_{k=1}^m\left|\lambda_k\right|^pv(x_k)^p\left|g(x_k)\right|^p\right)^{\frac{1}{p}}\\
&\leq \sum_{i=1}^n\pi^{\Hv}_p(f_i)\sup_{g\in B_{\Hv(U)}}\left(\sum_{k=1}^m\left|\lambda_k\right|^pv(x_k)^p\left|g(x_k)\right|^p\right)^{\frac{1}{p}}\\
\end{align*}
for all $n\in\mathbb{N}$. Taking limits with $n\to +\infty$, we deduce   
$$
\left(\sum_{k=1}^m\left|\lambda_k\right|^pv(x_k)^p\left\|f(x_k)\right\|^p\right)^{\frac{1}{p}}
\leq \sum_{i=1}^\infty\pi^{\Hv}_p(f_i)\sup_{g\in B_{\Hv(U)}}\left(\sum_{k=1}^m\left|\lambda_k\right|^pv(x_k)^p\left|g(x_k)\right|^p\right)^{\frac{1}{p}}.
$$
Hence $f\in\Pi^{\Hv}_p(U,F)$ with $\pi^{\Hv}_p(f)\leq\sum_{n=1}^\infty \pi^{\Hv}_p(f_n)$. Furthermore,  
$$
\pi^{\Hv}_p\left(f-\sum_{i=1}^nf_i\right)=\pi^{\Hv}_p\left(\sum_{i=n+1}^\infty f_i\right)\leq\sum_{i=n+1}^\infty \pi^{\Hv}_p(f_i) 
$$
for all $n\in\mathbb{N}$, and so $f$ is the $\pi^{\Hv}_p$-limit of the series $\sum_{n\geq 1} f_n$. This proves the completeness of $\pi^{\Hv}_p$.

(P2) Let $h\in\Hv(U)$ and $y\in F$. Note that $h\cdot y\in\Hv(U,F)$ with $\|h\cdot y\|_v=\|h\|_v\left\|y\right\|$. We can suppose $h\neq 0$ and obtain 
\begin{align*}
\left(\sum_{i=1}^n\left|\lambda_i\right|^pv(x_i)^p\left\|(h\cdot y)(x_i)\right\|^p\right)^{\frac{1}{p}}
&=\left\|y\right\|\|h\|_v\left(\sum_{i=1}^n\left|\lambda_i\right|^pv(x_i)^p\left|\left(\frac{h}{\|h\|_v}\right)(x_i)\right|^p\right)^{\frac{1}{p}}\\
&\leq\left\|y\right\|\|h\|_v\sup_{g\in B_{\Hv(U)}}\left(\sum_{i=1}^n\left|\lambda_i\right|^pv(x_i)^p\left|g(x_i)\right|^p\right)^{\frac{1}{p}}.
\end{align*}
Thus $h\cdot y\in\Pi^{\Hv}_p(U,F)$ with $\pi^{\Hv}_p(h\cdot y)\leq \|h\|_v\left\|y\right\|$. Conversely, note that $\|h\|_v\|y\|\leq\pi^{\Hv}_p(h\cdot y)$ by Remark \ref{rem-p-summing weighted holomorphic}.

(P3) Let $V$ be an open subset of $E$ such that $V\subseteq U$, $h\in \mathcal{H}(V,U)$ with $c_v(h)<\infty$, $f\in\Pi^{\Hv}_p(U,F)$ and $T\in\L(F,G)$. Observe that $T\circ f\circ h\in\Hv(V,G)$. For any $n\in\mathbb{N}$, $\lambda_1,\ldots,\lambda_n\in\mathbb{C}$ and $x_1,\ldots,x_n\in V$, we obtain 
\begin{align*}
&\left(\sum_{i=1}^n\left|\lambda_i\right|^pv(x_i)^p\left\|(T\circ f\circ h)(x_i)\right\|^p\right)^{\frac{1}{p}}\\
&\leq\|T\|\left(\sum_{i=1}^n |\lambda_i|^p \frac{v(x_i)^p}{v(h(x_i))^p}v(h(x_i))^p \|f(h(x_i))\|^p \right)^{\frac{1}{p}}\\
&\leq \|T\|\pi_p^{\Hv}(f)\sup_{g\in B_{\Hv(U)}}\left(\sum_{i=1}^n |\lambda_i|^p \frac{v(x_i)^p}{v(h(x_i))^p}v(h(x_i))^p |g(h(x_i))|^p \right)^{\frac{1}{p}} \\ 
&=\|T\|\pi_p^{\Hv}(f)\sup_{g\in B_{\Hv(U)}}\left(\sum_{i=1}^n |\lambda_i|^p v(x_i)^p|(g\circ h)(x_i)|^p \right)^{\frac{1}{p}}\\ 
&\leq\|T\|\pi_p^{\Hv}(f)c_v(h)\sup_{0\neq g\in B_{\Hv(U)}}\left(\sum_{i=1}^n |\lambda_i|^p v(x_i)^p||g||^p_v\left|\left(\frac{g\circ h}{c_v(h)||g||_v}\right)(x_i)\right|^p \right)^{\frac{1}{p}}\\ 
&\leq \|T\|\pi_p^{\Hv}(f)c_v(h)\sup_{g_0\in B_{\Hv(V)}}\left(\sum_{i=1}^n |\lambda_i|^p v(x_i)^p|g_0(x_i)|^p \right)^{\frac{1}{p}},
\end{align*}
since $g\circ h\in\Hv(V)$ with $\|g\circ h\|_v\leq c_v(h)\|g\|_v$. So, $T\circ f\circ h\in\Pi^{\Hv}_p(V,G)$ and $\pi^{\Hv}_p(T\circ f\circ h)\leq\left\|T\right\|\pi^{\Hv}_p(f)c_v(h)$.

(I) Let $f\in\Hv(U,F)$ and $\iota\colon F\to G$ be an into linear isometry such that $\iota\circ f\in\Pi^{\Hv}_p(U,G)$. Given $n\in\mathbb{N}$, $\lambda_1,\ldots,\lambda_n\in\mathbb{C}$ and $x_1,\ldots,x_n\in U$, we have 
\begin{align*}
\left(\sum_{i=1}^n\left|\lambda_i\right|^pv(x_i)^p\left\|f(x_i)\right\|^p\right)^{\frac{1}{p}}
&=\left(\sum_{i=1}^n\left|\lambda_i\right|^pv(x_i)^p\left\|(\iota\circ f)(x_i))\right\|^p\right)^{\frac{1}{p}}\\
&\leq \pi^{\Hv}_p(\iota\circ f)\sup_{g\in B_{\Hv(U)}}\left(\sum_{i=1}^n\left|\lambda_i\right|^pv(x_i)^p\left|g(x_i)\right|^p\right)^{\frac{1}{p}},
\end{align*}
and so $f\in\Pi^{\Hv}_p(U,F)$ with $\pi^{\Hv}_p(f)\leq\pi^{\Hv}_p(\iota\circ f)$. The opposite inequality follows from (P3).
\end{proof}


\subsection{Pietsch domination}

We can characterize $p$-summing weighted holomorphic mappings in terms of the known property of Pietsch domination (see \cite[Theorem 2]{Pie-67}). 

In light of Theorem \ref{t0}, $\Hv(U)$ is a dual Banach space and, consequently, this space can be endowed with the weak* topology. Let $\P(B_{\Hv(U)})$ be the set of all Borel regular probability measures $\mu$ on the Hausdorff compact set $(B_{\Hv(U)},w^*)$.

\begin{theorem}\label{Pietsch} 
Let $1\leq p<\infty$ and $f\in\Hv(U,F)$. The following statements are equivalent:
\begin{enumerate}
\item[(i)] $f$ is $p$-summing weighted holomorphic.
\item[(ii)] There is a constant $C\geq 0$ and a measure $\mu\in\P(B_{\Hv(U)})$ such that 
$$
\left\|f(x)\right\|\leq C\left(\int_{B_{\Hv(U)}}\left|g(x)\right|^{p}d\mu(g)\right)^{\frac{1}{p}}\qquad (x\in U).
$$
\end{enumerate}
In this case, $\pi^{\Hv}_p(f)$ is the infimum of all constants $C\geq0$ satisfying the inequality in $(ii)$ and, in fact, this infimum is attained.
\end{theorem}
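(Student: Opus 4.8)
The plan is to run the classical Hahn--Banach/Ky Fan argument behind Pietsch's domination theorem, with the relevant compact set being $K:=(B_{\Hv(U)},w^*)$. The implication (ii)$\Rightarrow$(i) is routine: given $C$ and $\mu$ as in (ii), raising the inequality to the $p$-th power, multiplying by $|\lambda_i|^pv(x_i)^p$, summing over $i$ and interchanging the finite sum with the integral gives
\begin{align*}
\sum_{i=1}^n|\lambda_i|^pv(x_i)^p\|f(x_i)\|^p
&\le C^p\int_{B_{\Hv(U)}}\sum_{i=1}^n|\lambda_i|^pv(x_i)^p|g(x_i)|^p\,d\mu(g)\\
&\le C^p\sup_{g\in B_{\Hv(U)}}\sum_{i=1}^n|\lambda_i|^pv(x_i)^p|g(x_i)|^p ,
\end{align*}
so $f\in\Pi^{\Hv}_p(U,F)$ with $\pi^{\Hv}_p(f)\le C$; in particular every constant admissible in (ii) dominates $\pi^{\Hv}_p(f)$.

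For (i)$\Rightarrow$(ii) I would put $C:=\pi^{\Hv}_p(f)$ and first observe that $C$ itself satisfies the inequality of Definition \ref{def-p-summing weighted holomorphic}, being an infimum of admissible constants (the inequality passes to the limit for each fixed finite family). Working in the real Banach space $C_{\R}(K)$ of real-valued continuous functions on the compact Hausdorff space $K$, for each finite family $(\lambda_i,x_i)_{i=1}^n$ set
\[
\varphi_{(\lambda_i,x_i)}(g):=\sum_{i=1}^n|\lambda_i|^pv(x_i)^p\|f(x_i)\|^p-C^p\sum_{i=1}^n|\lambda_i|^pv(x_i)^p|g(x_i)|^p .
\]
These functions genuinely lie in $C_{\R}(K)$: since $\delta_x\in\Gv(U)$ by Theorem \ref{t0}(ii) and $J_v\colon\Hv(U)\to\Gv(U)^*$ is an isometric isomorphism, the map $g\mapsto g(x)=\delta_x(g)$ is weak*-continuous on $\Hv(U)$, hence so is $g\mapsto|g(x)|^p$ and every finite sum of such maps. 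The family $Q$ of all these $\varphi_{(\lambda_i,x_i)}$ is a convex cone containing $0$, because concatenating two families adds the corresponding functions and $t\,\varphi_{(\lambda_i,x_i)}=\varphi_{(t^{1/p}\lambda_i,x_i)}$ for $t>0$.

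Next I would verify that each $\varphi\in Q$ attains a nonpositive value on $K$: by weak*-compactness of $B_{\Hv(U)}$ together with the continuity just noted, the supremum $\sup_{g\in B_{\Hv(U)}}\sum_i|\lambda_i|^pv(x_i)^p|g(x_i)|^p$ is a maximum, and feeding this into the defining inequality for $C=\pi^{\Hv}_p(f)$ yields some $g_0\in K$ with $\varphi(g_0)\le 0$. Thus $Q$ is disjoint from the nonempty open convex cone $\{h\in C_{\R}(K):h>0\text{ on }K\}$ (which contains the constant $\mathbf 1$), so Hahn--Banach separation produces a nonzero $L\in C_{\R}(K)^*$ with $L\le 0$ on $Q$ and $L\ge 0$ on that open cone; the latter forces $L$ to be positive with $L(\mathbf 1)>0$, and by the Riesz representation theorem, after normalization, $L=\int_K(\cdot)\,d\mu$ for some $\mu\in\P(B_{\Hv(U)})$ satisfying $\int_K\varphi\,d\mu\le 0$ for all $\varphi\in Q$. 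Applying this to the one-point family $n=1$, $\lambda_1=1$, $x_1=x$ gives $v(x)^p\|f(x)\|^p\le C^pv(x)^p\int_K|g(x)|^p\,d\mu(g)$; dividing by $v(x)^p>0$ and taking $p$-th roots yields (ii) with $C=\pi^{\Hv}_p(f)$. Combined with the previous paragraph, this also shows that the infimum of admissible constants equals $\pi^{\Hv}_p(f)$ and is attained.

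The part I expect to need the most care is structural rather than conceptual: confirming that each $\varphi_{(\lambda_i,x_i)}$ is weak*-continuous on $B_{\Hv(U)}$ — which is exactly the point where $\delta_x\in\Gv(U)$ is indispensable — and that the separating functional can be turned into an honest probability measure (positivity of $L$, $L(\mathbf 1)\ne 0$, rescaling). The remainder is the standard Pietsch machinery transcribed to the weighted holomorphic setting.
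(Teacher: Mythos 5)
Your proof is correct, but the implication (i)$\Rightarrow$(ii) follows a genuinely different route from the paper. The paper does not run the separation argument itself: it verifies that $f$ is $RS$-abstract $p$-summing for the choices $S(f,x,\lambda)=|\lambda|v(x)\|f(x)\|$ and $R(g,x,\lambda)=|\lambda|v(x)|g(x)|$ (checking, as you do, that $g\mapsto|g(x)|$ is continuous on $(B_{\Hv(U)},w^*)$, which is where $\delta_x\in\Gv(U)$ enters), and then invokes the unified abstract Pietsch Domination Theorem of Pellegrino--Santos \cite[Theorem 3.1]{PelSan-11} to produce $\mu\in\P(B_{\Hv(U)})$ with constant exactly $\pi^{\Hv}_p(f)$. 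You instead transcribe the classical Pietsch machinery by hand: the convex cone $Q$ of functions $\varphi_{(\lambda_i,x_i)}$ in $C_{\R}(B_{\Hv(U)})$, the observation that each such function attains a nonpositive value (using weak*-compactness of $B_{\Hv(U)}$, i.e.\ Banach--Alaoglu through $\Hv(U)\cong\Gv(U)^*$, and the fact that the infimum $\pi^{\Hv}_p(f)$ itself satisfies the defining inequalities), Hahn--Banach separation from the open cone of strictly positive functions, positivity and normalization of the separating functional, and Riesz representation. Both arguments rest on the same two structural facts (weak*-continuity of evaluations and weak*-compactness of the ball), and both yield the domination with $C=\pi^{\Hv}_p(f)$, hence attainment of the infimum; your version is self-contained and makes the mechanism explicit, while the paper's is shorter and delegates the separation argument to a general black box. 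The direction (ii)$\Rightarrow$(i) and the identification of $\pi^{\Hv}_p(f)$ with the infimum of admissible constants coincide with the paper's computation.
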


\begin{proof}
$(i)\Rightarrow (ii)$: In order to apply an unified abstract version of Piestch Domination Theorem stated in \cite{PelSan-11}, consider the functions $S\colon\Hv(U,F)\times U\times\C\to [0,\infty[$ and $R\colon B_{\Hv(U)}\times U\times\C\to [0,\infty[$  defined, respectively, by $S(f,x,\lambda)=\left|\lambda\right|v(x)\left\|f(x)\right\|$ and $R(g,x,\lambda)=\left|\lambda\right|v(x)\left|g(x)\right|$. Note first that for any $x\in U$ and $\lambda\in\C$, the function $R_{x,\lambda}\colon B_{\Hv(U)}\to [0,\infty[$, given by $R_{x,\lambda}(g)=R(g,x,\lambda)$, is continuous. For every $n\in\mathbb{N}$, $\lambda_1,\ldots,\lambda_n\in\mathbb{C}$ and $x_1,\ldots,x_n\in U$, we have  
\begin{align*}
\left(\sum_{i=1}^n S(f,x_i,\lambda_i)^p\right)^{\frac{1}{p}}
&=\left(\sum_{i=1}^n \left|\lambda_i\right|^p v(x_i)^p\left\|f(x_i)\right\|^p\right)^{\frac{1}{p}}\\
&\leq\pi^{\Hv}_p(f)\sup_{g\in B_{\Hv(U)}}\left(\sum_{i=1}^n\left|\lambda_i\right|^p v(x_i)^p\left|g(x_i)\right|^p\right)^{\frac{1}{p}}\\
&=\pi^{\Hv}_p(f)\sup_{g\in B_{\Hv(U)}}\left(\sum_{i=1}^n R(g,x_i,\lambda_i)^p\right)^{\frac{1}{p}},
\end{align*}
and therefore $f$ is $RS$-abstract $p$-summing. Hence, by applying \cite[Theorem 3.1]{PelSan-11}, there is a measure $\mu\in\P(B_{\Hv(U)})$ such that 
$$
S(f,x,\lambda)\leq \pi^{\Hv}_p(f)\left(\int_{B_{\Hv(U)}}R(g,x,\lambda)^p\ d\mu(g)\right)^{\frac{1}{p}}
$$
for all $x\in U$ and $\lambda\in\C$, and therefore 
$$
\left\|f(x)\right\|\leq \pi^{\Hv}_p(f)\left(\int_{B_{\Hv(U)}}\left\vert g(x)\right\vert^{p}d\mu(g)\right)^{\frac{1}{p}}
$$
for all $x\in U$. 

$(ii)\Rightarrow(i)$: Given $n\in\mathbb{N}$, $\lambda_1,\ldots,\lambda_n\in\mathbb{C}$ and $x_1,\ldots,x_n\in U$, we have
\begin{align*}
\sum_{i=1}^n\left|\lambda_i\right|^pv(x_i)^p\left\|f(x_i)\right\|^p
&\leq C^p \sum_{i=1}^n\int_{B_{\Hv(U)}}\left|\lambda_i\right|^p v(x_i)^p\left|g(x_i)\right|^{p}d\mu(g)\\
&= C^p \int_{B_{\Hv(U)}}\left(\sum_{i=1}^n\left|\lambda_i\right|^pv(x_i)^p\left|g(x_i)\right|^{p}\right)d\mu(g)\\
&\leq C^p \sup_{g\in B_{\Hv(U)}}\left(\sum_{i=1}^n\left|\lambda_i\right|^pv(x_i)^p\left|g(x_i)\right|^{p}\right).
\end{align*}
Hence $f\in\Pi^{\Hv}_p(U,F)$ with $\pi^{\Hv}_p(f)\leq C$, and so $\pi^{\Hv}_p(f)\leq\inf\{C\geq 0\colon C \text{ satisfies } (ii)\}$.
\end{proof}


\subsection{Factorization through spaces of continuous functions}

Our initial objetive in this section was to characterize the class of all $p$-summing weighted holomorphic mappings in terms of a factorization through a $L_p(\mu)$-space and a $C(K)$-space (that is, the so-called Pietsch factorization, see \cite[Theorem 3]{Pie-67}, also \cite[Theorem 2.13]{DisJarTon-95}). However we may only get a factorization of such mappings through a space of continuous functions $C(K)$.

Towards this end, consider the map $\iota_U\colon U\to C(B_{\Hv(U)})$ defined by  
$$
\iota_U(x)(g)=g(x)\qquad (x\in U,\; g\in B_{\Hv(U)}),
$$
the inclusion operator $j_p\colon C(B_{\Hv(U)})\to L_p(\mu)$ for some $\mu\in\P(B_{\Hv(U)})$ and $1\leq p\leq\infty$, and the isometric linear embedding $\kappa_F\colon F\to \ell_\infty(B_{F^*})$ given by 
$$
\left\langle \kappa_F(y),y^*\right\rangle=y^*(y)\qquad (y^*\in B_{F^*},\; y\in F).
$$
Note that $\iota_U\in\Hv(U,C(B_{\Hv(U)}))$ with $\|\iota_U\|_v\leq 1$ (see Theorem \ref{t0} (iii)). 


\begin{theorem}\label{Pietsch2} 
Let $1\leq p<\infty$ and $f\in\Hv(U,F)$. Consider the following statements:
\begin{enumerate}
\item[(i)] There are a mapping $h_0\in\Hv(U,C(B_{\Hv(U)}))$, a measure $\mu_0\in\P(B_{\Hv(U)})$ and an operator $S_0\in\L(L_p(\mu_0),\ell_\infty(B_{F^*}))$ such that the following diagram commutes:
$$			
\xymatrix{C(B_{\Hv(U)}) \ar[rr]^{j_p} && L_p(\mu_0) \ar[d]^{S_0} \\
				U\ar[u]^{h_0} \ar[r]^{f} & F \ar[r]^{\kappa_F} & \ell_\infty(B_{F^*})}
$$
\item[(ii)] $f$ is $p$-summing weighted holomorphic.
\item[(iii)] There are an operator $S\in\L(C(B_{\Hv(U)}),\ell_\infty(B_{F^*}))$ and a mapping $h\in\Hv(U,C(B_{\Hv(U)}))$ such that the following diagram is commutative:
$$			
\xymatrix{C(B_{\Hv(U)}) \ar[r]^{S} & \ell_\infty(B_{F^*}) \\
				U\ar[u]^{h} \ar[r]^{f} & F\ar[u]^{\kappa_F}}
$$
\end{enumerate}
Then $(i)\Rightarrow(ii)\Rightarrow(iii)$. In such a case, 
$$
\inf\left\{\left\|S\right\|\|h\|_v\right\}\leq\pi^{\Hv}_p(f)\leq\inf\left\{\left\|S_0\right\|\|h_0\|_v\right\},
$$
where the infimum is taken over all factorizations of $\kappa_F\circ f$ as in $(iii)$ and $(i)$, respectively.
\end{theorem}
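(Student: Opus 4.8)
The plan is to prove the two implications separately, leaning on the machinery already developed: Corollary~\ref{new corollary}, the injective Banach weighted holomorphic ideal property of $[\Pi^{\Hv}_p,\pi^{\Hv}_p]$ (Proposition~\ref{ideal summing}), and the Pietsch Domination Theorem~\ref{Pietsch}. The one external ingredient I will use is the classical fact that the formal inclusion $j_p\colon C(B_{\Hv(U)})\to L_p(\mu_0)$ is $p$-summing with $\pi_p(j_p)\leq\mu_0(B_{\Hv(U)})^{1/p}=1$, since $\mu_0$ is a probability measure (see \cite[Chapter~2]{DisJarTon-95}).

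For $(i)\Rightarrow(ii)$, I would argue as follows. By the ideal property of $[\Pi_p,\pi_p]$, the operator $S_0\circ j_p\colon C(B_{\Hv(U)})\to\ell_\infty(B_{F^*})$ is $p$-summing with $\pi_p(S_0\circ j_p)\leq\|S_0\|\pi_p(j_p)\leq\|S_0\|$. Since $h_0\in\Hv(U,C(B_{\Hv(U)}))$, Corollary~\ref{new corollary} applied to $S_0\circ j_p$ and $h_0$ gives $(S_0\circ j_p)\circ h_0\in\Pi^{\Hv}_p(U,\ell_\infty(B_{F^*}))$ with $\pi^{\Hv}_p((S_0\circ j_p)\circ h_0)\leq\pi_p(S_0\circ j_p)\|h_0\|_v\leq\|S_0\|\|h_0\|_v$. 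The commuting diagram in $(i)$ says exactly $\kappa_F\circ f=S_0\circ j_p\circ h_0$, and $\kappa_F$ is an isometric linear embedding, so the injectivity of $[\Pi^{\Hv}_p,\pi^{\Hv}_p]$ yields $f\in\Pi^{\Hv}_p(U,F)$ with $\pi^{\Hv}_p(f)=\pi^{\Hv}_p(\kappa_F\circ f)\leq\|S_0\|\|h_0\|_v$. Taking the infimum over all factorizations as in $(i)$ gives the right-hand inequality $\pi^{\Hv}_p(f)\leq\inf\{\|S_0\|\|h_0\|_v\}$.

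For $(ii)\Rightarrow(iii)$, I would set $h:=\iota_U\in\Hv(U,C(B_{\Hv(U)}))$, which satisfies $\|h\|_v\leq 1$, and then build a bounded operator $S\colon C(B_{\Hv(U)})\to\ell_\infty(B_{F^*})$ with $S\circ\iota_U=\kappa_F\circ f$. First define $S_0$ on the linear span $M$ of $\{\iota_U(x)\colon x\in U\}$ by $S_0(\iota_U(x)):=\kappa_F(f(x))$ and linearity. It is well defined: if $\sum_k a_k\iota_U(x_k)=0$ in $C(B_{\Hv(U)})$ then $\sum_k a_k g(x_k)=0$ for every $g\in\Hv(U)$, in particular for $g=y^*\circ f$ with $y^*\in F^*$ (which lies in $\Hv(U)$), so $\sum_k a_k f(x_k)=0$ since $F^*$ separates points of $F$, hence $\sum_k a_k\kappa_F(f(x_k))=0$. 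Assuming $f\neq 0$ (the case $f=0$ being trivial), for each $y^*\in B_{F^*}$ one has $(y^*\circ f)/\|f\|_v\in B_{\Hv(U)}$, so
\begin{align*}
\left\|\sum_k a_k\kappa_F(f(x_k))\right\|
&=\sup_{y^*\in B_{F^*}}\left|\sum_k a_k(y^*\circ f)(x_k)\right|
\leq\|f\|_v\sup_{g\in B_{\Hv(U)}}\left|\sum_k a_k g(x_k)\right|\\
&=\|f\|_v\left\|\sum_k a_k\iota_U(x_k)\right\|_{C(B_{\Hv(U)})},
\end{align*}
whence $\|S_0\|\leq\|f\|_v$. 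Extending $S_0$ by continuity to $\overline{M}$ and then, by the $1$-injectivity of $\ell_\infty(B_{F^*})$ (Hahn--Banach applied coordinatewise), to an operator $S\in\L(C(B_{\Hv(U)}),\ell_\infty(B_{F^*}))$ with $\|S\|=\|S_0\|\leq\|f\|_v$, we obtain $S\circ\iota_U=\kappa_F\circ f$ and $\|S\|\|h\|_v\leq\|f\|_v\leq\pi^{\Hv}_p(f)$ by Remark~\ref{rem-p-summing weighted holomorphic}; taking the infimum gives the left-hand inequality and completes the chain in the statement. I expect $(i)\Rightarrow(ii)$ to be essentially bookkeeping once $\pi_p(j_p)\leq 1$ is recorded; the only real point in $(ii)\Rightarrow(iii)$ is that one must factor directly through $C(B_{\Hv(U)})$ — a factorization through an $L_p$-space being unavailable because $L_p(\mu)$-classes have no pointwise values at the $g\in B_{\Hv(U)}$ — and then invoke the injectivity of $\ell_\infty(B_{F^*})$ to extend $S_0$ off the span of the point evaluations. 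It is worth noting that this construction in fact works for every $f\in\Hv(U,F)$ and only produces $\|S\|\|h\|_v\leq\|f\|_v$, which is why $(iii)$ is not claimed to characterize $p$-summability.
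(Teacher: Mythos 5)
Your proof is correct. The implication $(i)\Rightarrow(ii)$ follows the paper verbatim: $\pi_p(j_p)\le 1$, the operator ideal property for $S_0\circ j_p$, Corollary \ref{new corollary} for the composition with $h_0$, and the injectivity of $[\Pi^{\Hv}_p,\pi^{\Hv}_p]$ to pass from $\kappa_F\circ f$ back to $f$ with equal norm. For $(ii)\Rightarrow(iii)$ you take a genuinely different and more elementary route. The paper also sets $h=\iota_U$ and defines the operator on $M=\lin\left((v\iota_U)(U)\right)$ by sending $v(x)\iota_U(x)$ to $v(x)\kappa_F(f(x))$, but it estimates its norm via the Pietsch domination measure from Theorem \ref{Pietsch} together with an auxiliary norm $\pi(y)=\inf\sum_i|\alpha_i|$ on $M$, proved equal to $\|\cdot\|_\infty$ through a Hahn--Banach separation argument that manufactures a function $k_\eta\in B_{\Hv(U)}$; this yields $\|S\|\le\pi^{\Hv}_p(f)$ before extending by the $1$-injectivity of $\ell_\infty(B_{F^*})$. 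You instead bound the operator on the span directly: writing $\left\|\sum_k a_k\kappa_F(f(x_k))\right\|_\infty$ as a supremum over $y^*\in B_{F^*}$ and observing that $(y^*\circ f)/\|f\|_v\in B_{\Hv(U)}$ gives $\|S\|\le\|f\|_v$ without any measure or separation argument, and then $\|S\|\|h\|_v\le\|f\|_v\le\pi^{\Hv}_p(f)$ by Remark \ref{rem-p-summing weighted holomorphic}. Your approach buys a sharper estimate ($\inf\{\|S\|\|h\|_v\}\le\|f\|_v$) and makes explicit that the factorization in $(iii)$ exists for every $f\in\Hv(U,F)$, which is consistent with the theorem (only the stated implications and inequalities are claimed) and neatly explains why $(iii)$ cannot characterize $p$-summability; the paper's heavier argument does not extract more in this direction, though its $\pi=\|\cdot\|_\infty$ identification on $M$ is of some independent interest.
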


\begin{proof}
$(i)\Rightarrow(ii)$: We can write $\kappa_F\circ f=S_0\circ j_p\circ h_0$ as in $(i)$. Since $S_0\circ j_p\in\Pi_p(C(B_{\Hv(U)}),\ell_\infty(B_{F^*}))$ with $\pi_p(S_0\circ j_p)\leq \left\|S_0\right\|\pi_p(j_p)=\left\|S_0\right\|$ by \cite[Theorem 2.4 and Examples 2.9]{DisJarTon-95}, then $\kappa_F\circ f\in\Pi^{\Hv}_p(U,\ell_\infty(B_{F^*}))$ with $\pi^{\Hv}_p(\kappa_F\circ f)\leq\pi_p(S_0\circ j_p)\|h_0\|_v$ by Corollary \ref{new corollary}. By Proposition \ref{ideal summing}, it follows that $f\in\Pi^{\Hv}_p(U,F)$ with $\pi^{\Hv}_p(f)=\pi^{\Hv}_p(\kappa_F\circ f)$, and thus $\pi^{\Hv}_p(f)\leq\left\|S_0\right\|\|h_0\|_v$. Taking the infimum over all factorizations of $\kappa_F\circ f$, one has $\pi^{\Hv}_p(f)\leq\inf\left\{\left\|S_0\right\|\|h_0\|_v\right\}$. 

$(ii) \Rightarrow (iii)$: If $f\in\Pi^{\Hv}_p(U,F)$, then Theorem \ref{Pietsch} yields a measure $\mu\in\P(B_{\Hv(U)})$ such that 
$$
\left\|f(x)\right\|\leq \pi^{\Hv}_p(f)\left(\int_{B_{\Hv(U)}}\left\vert g(x)\right\vert^{p}d\mu(g)\right)^{\frac{1}{p}}\qquad (x\in U).
$$
Take the linear subspace $M:=\lin((v\iota_U)(U))\subseteq C(B_{\Hv(U)})$ and let $y\in M$. Hence $y=\sum_{i=1}^n\alpha_iv(x_i)\iota_U(x_i)$ for some $n\in\mathbb{N}$, $(\alpha_i)_{i=1}^n\in\mathbb{C}^n$ and $(x_i)_{i=1}^n\in U^n$. Define the map $S_1\colon M\to\ell_\infty(B_{F^*})$ by 
$$
S_1(y)=\sum_{i=1}^n\alpha_iv(x_i)\kappa_F(f(x_i)).
$$
We claim that $S_1$ is well defined. Indeed, suppose $\sum_{i=1}^n\alpha_iv(x_i)\iota_U(x_i)=\sum_{j=1}^m\beta_jv(z_j)\iota_U(z_j)$ for some $m\in\mathbb{N}$, $(\beta_j)_{j=1}^m\in\mathbb{C}^m$ and $(z_j)_{j=1}^m\in U^m$. It easily follows that   
$$
\sum_{i=1}^n\alpha_iv(x_i)\delta_{x_i}=\sum_{j=1}^m\beta_jv(z_j)\delta_{z_j},
$$ 
and we conclude that 
\begin{align*}
S_1\left(\sum_{i=1}^n\alpha_iv(x_i)\iota_U(x_i)\right)&=\sum_{i=1}^n\alpha_iv(x_i)\kappa_F(f(x_i))=(\kappa_F\circ T_f)\left(\sum_{i=1}^n\alpha_iv(x_i)\delta_{x_i}\right)\\
&=(\kappa_F\circ T_f)\left(\sum_{j=1}^m\beta_jv(z_j)\delta_{z_j}\right)=\sum_{j=1}^m\beta_jv(z_j)\kappa_F(f(z_j))=S_1\left(\sum_{j=1}^m\beta_jv(z_j)\iota_U(z_j)\right),
\end{align*}
as required. Note that 
$$
\left\|\iota_U(x_i)\right\|_{\infty}=\sup_{g\in B_{\Hv(U)}}\left|g(x_i)\right|\leq \frac{1}{v(x_i)}
$$
for all $i\in\{1,\ldots,n\}$, hence     
$$
\left\|y\right\|_{\infty}
\leq \sum_{i=1}^n\left|\alpha_i\right|v(x_i)\left\|\iota_U(x_i)\right\|_{\infty}
\leq \sum_{i=1}^n\left|\alpha_i\right|,
$$
and therefore 
$$
\left\|y\right\|_{\infty}\leq\pi(y):=\inf\left\{\sum_{i=1}^n\left|\alpha_i\right|\right\},
$$
where the infimum is taken over all $n\in\N$, $(\alpha_i)_{i=1}^n\in\mathbb{C}^n$ and $(x_i)_{i=1}^n\in U^n$ such that $y=\sum_{i=1}^n\alpha_iv(x_i)\iota_U(x_i)$. It is easy to show that $\pi$ defines a norm on $M$. We now prove that $\|y\|_{\infty}=\pi(y)$. Otherwise, suppose that $\left\|y\right\|_{\infty}<\pi(y)$. Take the set 
$$
B=\{z\in M\colon \pi(z)\leq \left\|y\right\|_{\infty}\}.
$$
Clearly, $B$ is a closed convex subset of $(M,\pi)$. An application of Hahn--Banach Separation Theorem to $B$ and $\{y\}$ provides a functional $\eta\in(M,\pi)^*$ with $\left\|\eta\right\|=1$ such that 
$$
\left\|y\right\|_{\infty}=\sup\{\mathrm{Re}(\eta(z))\colon z\in B\}<\mathrm{Re}(\eta(y)).  
$$
Define the function $k_\eta\colon U\to\C$ by 
$$
k_\eta(x)=\eta(\iota_U(x))\qquad (x\in U).
$$
Clearly, $k_\eta\in\Hv(U)$ with $\left\|k_\eta\right\|_v\leq 1$ since $k_\eta\in\H(U)$ and   
$$
v(x)|k_\eta(x)|=|\eta(v(x)\iota_U(x))|\leq\left\|\eta\right\|\pi(v(x)\iota_U(x))\leq 1
$$
for all $x\in U$. Note that 
\begin{align*}
\eta(y)&=\eta\left(\sum_{i=1}^n\alpha_iv(x_i)\iota_U(x_i)\right)=\sum_{i=1}^n\alpha_iv(x_i)\eta(\iota_U(x_i))\\
&=\sum_{i=1}^n\alpha_iv(x_i)k_\eta(x_i)=\sum_{i=1}^n\alpha_iv(x_i)\iota_U(x_i)(k_\eta)=y(k_\eta).
\end{align*}
Then we arrive at the contradiction:
$$
\left\|y\right\|_{\infty}
\geq \left|y(k_\eta)\right|
=\left|\eta(y)\right|
\geq\mathrm{Re}(\eta(y)),
$$
and this proves that $\pi(y)\leq\|y\|_{\infty}$.
	
Clearly, $S_1$ is linear, and a simple proof shows that   
\begin{align*}
\left\|S_1(y)\right\|_{\infty}
&=\left\|\sum_{i=1}^n\alpha_iv(x_i)\kappa_F(f(x_i))\right\|_{\infty}
\leq \sum_{i=1}^n \left|\alpha_i\right|v(x_i)\left\|f(x_i)\right\|\\
&\leq\pi^{\Hv}_p(f)\sum_{i=1}^n \left|\alpha_i\right|\left(\int_{B_{\Hv(U)}}v(x_i)^p\left|g(x_i)\right|^pd\mu(g)\right)^{\frac{1}{p}}
\leq \pi^{\Hv}_p(f)\sum_{i=1}^n\left|\alpha_i\right|.
\end{align*}
Taking the infimum over all representations of $y$, we deduce that $\left\|S_1(y)\right\|_{\infty}\leq\pi^{\Hv}_p(f)\pi(y)$ and thus $\left\|S_1(y)\right\|_{\infty}\leq\pi^{\Hv}_p(f)\left\|y\right\|_{\infty}$. Therefore we have that $S_1\in\L(M,\ell_\infty(B_{F^*}))$ with $\left\|S_1\right\|\leq\pi^{\Hv}_p(f)$. Since the Banach space $\ell_\infty(B_{F^*})$ is injective (see \cite[p. 45]{DisJarTon-95}), there is an operator  $S\in\L(C(B_{\Hv(U)}),\ell_\infty(B_{F^*}))$ such that $\left.S\right|_{M}=S_1$ with $\left\|S\right\|=\left\|S_1\right\|$. Hence we have 
$$
\kappa_F\circ f=S_1\circ\iota_U=S\circ\iota_U,
$$
and thus $\left\|S\right\|\|\iota_U\|_v\leq\pi^{\Hv}_p(f)$. The proof is completed by taking $h:=\iota_U$.
\end{proof}

\subsection{Inclusion and coincidence}

The next result establishes some inclusion and coincidence relations between classes of $p$-summing weighted holomorphic mappings in terms of $p$.

\begin{proposition}\label{prop-1}
If $1\leq p<q\leq\infty$, then 
$$
(\Pi^{\Hv}_p(U,F),\pi^{\Hv}_p)\leq (\Pi^{\Hv}_q(U,F),\pi^{\Hv}_q)\leq (\Pi^{\Hv}_\infty(U,F),\pi^{\Hv}_\infty)=(\Hv(U,F),\|\cdot\|_v).
$$
\end{proposition}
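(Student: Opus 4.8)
The plan is to reduce everything to elementary inequalities between $\ell_p$- and $\ell_q$-type quantities applied to the scalar sequences $(|\lambda_i| v(x_i) \|f(x_i)\|)_i$ and $(|\lambda_i| v(x_i) |g(x_i)|)_i$, exactly as in the classical proof that $\Pi_p \subseteq \Pi_q$ for linear operators. The only genuinely new input is the identification $\Pi^{\Hv}_\infty(U,F) = (\Hv(U,F), \|\cdot\|_v)$ at the endpoint, which I would handle separately since the $p=\infty$ condition in Definition \ref{def-p-summing weighted holomorphic} uses $\max$ rather than a sum.

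First I would establish the endpoint equality $(\Pi^{\Hv}_\infty(U,F), \pi^{\Hv}_\infty) = (\Hv(U,F), \|\cdot\|_v)$. The inclusion $\Pi^{\Hv}_\infty(U,F) \subseteq \Hv(U,F)$ with $\|f\|_v \leq \pi^{\Hv}_\infty(f)$ is already noted in Remark \ref{rem-p-summing weighted holomorphic}. For the converse, let $f \in \Hv(U,F)$ and fix $n\in\N$, $\lambda_1,\dots,\lambda_n\in\C$, $x_1,\dots,x_n\in U$. By Theorem \ref{t0}(ii), for each $i$ there is $g_{x_i}\in B_{\Hv(U)}$ with $v(x_i)|g_{x_i}(x_i)| = \|\delta_{x_i}\| = \sup_{g\in B_{\Hv(U)}} v(x_i)|g(x_i)|$, hence $\|f\|_v \geq v(x_i)\|f(x_i)\|$ shows
$$
\max_{1\leq i\leq n} |\lambda_i| v(x_i)\|f(x_i)\| \leq \|f\|_v \max_{1\leq i\leq n} |\lambda_i| \leq \|f\|_v \sup_{g\in B_{\Hv(U)}} \max_{1\leq i\leq n} |\lambda_i| v(x_i)|g(x_i)|,
$$
where the last step uses that for the index $i_0$ attaining the left-hand max we may take $g = g_{x_{i_0}}$ to get $v(x_{i_0})|g(x_{i_0})| = \|\delta_{x_{i_0}}\| \geq v(x_{i_0})|\tilde g(x_{i_0})|$... actually the cleanest route is: pick the constant-bound weight trick is unavailable in general, so instead observe $v(x_{i_0})|g_{x_{i_0}}(x_{i_0})| \geq 1$ is false in general; rather, one uses that $f/\|f\|_v \in B_{\Hv(U,F)}$ and composes with a norming functional — but since the target sup is over scalar $g$, simply note $\sup_{g\in B_{\Hv(U)}} \max_i |\lambda_i|v(x_i)|g(x_i)| \geq \max_i |\lambda_i| \sup_{g} v(x_i)|g(x_i)| = \max_i |\lambda_i|\|\delta_{x_i}\| \geq \max_i |\lambda_i| v(x_i)\|f(x_i)\|/\|f\|_v$ using $v(x_i)\|f(x_i)\| \leq \|f\|_v$ and $\|\delta_{x_i}\| \geq v(x_i)|h(x_i)|$ for any $h\in B_{\Hv(U)}$; applying this with a Hahn--Banach functional $y^*$ on $F$ norming $f(x_i)$ gives $v(x_i)\|f(x_i)\| = v(x_i)|\langle y^*, f(x_i)\rangle| \leq \|\delta_{x_i}\|\|f\|_v$ since $x\mapsto \langle y^*,f(x)\rangle/\|f\|_v \in B_{\Hv(U)}$. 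This yields $\pi^{\Hv}_\infty(f) \leq \|f\|_v$, completing the endpoint equality.

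Next I would prove the middle inclusion $(\Pi^{\Hv}_p(U,F),\pi^{\Hv}_p) \leq (\Pi^{\Hv}_q(U,F),\pi^{\Hv}_q)$ for $1\leq p < q < \infty$ (the case $q=\infty$ being handled by the endpoint identity together with Remark \ref{rem-p-summing weighted holomorphic}). Let $f\in\Pi^{\Hv}_p(U,F)$ with constant $C = \pi^{\Hv}_p(f)$; I invoke Theorem \ref{Pietsch}, which gives $\mu\in\P(B_{\Hv(U)})$ with $\|f(x)\| \leq C\big(\int_{B_{\Hv(U)}} |g(x)|^p\, d\mu(g)\big)^{1/p}$ for all $x\in U$. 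Since $\mu$ is a probability measure and $q/p > 1$, Jensen's inequality (equivalently, monotonicity of $L^r(\mu)$-norms) gives $\big(\int |g(x)|^p d\mu\big)^{1/p} \leq \big(\int |g(x)|^q d\mu\big)^{1/q}$, hence $\|f(x)\| \leq C\big(\int_{B_{\Hv(U)}} |g(x)|^q\, d\mu(g)\big)^{1/q}$; running the implication $(ii)\Rightarrow(i)$ of Theorem \ref{Pietsch} with $q$ in place of $p$ yields $f\in\Pi^{\Hv}_q(U,F)$ with $\pi^{\Hv}_q(f)\leq C = \pi^{\Hv}_p(f)$. Alternatively, one can argue directly from Definition \ref{def-p-summing weighted holomorphic} without invoking the domination theorem: normalize so that $\sup_{g\in B_{\Hv(U)}} \big(\sum_i |\lambda_i|^q v(x_i)^q |g(x_i)|^q\big)^{1/q} \leq 1$, which forces $|\lambda_i| v(x_i)|g(x_i)| \leq 1$ for every $i$ and every $g$, hence $\big(\sum_i |\lambda_i|^p v(x_i)^p |g(x_i)|^p\big)^{1/p} \leq \big(\sum_i |\lambda_i|^q v(x_i)^q |g(x_i)|^q\big)^{1/q}$ pointwise in $g$ since each term has the larger exponent suppressed on a quantity $\leq 1$; this gives the $p$-side sup $\leq 1$, and one concludes $\pi^{\Hv}_q(f) \leq \pi^{\Hv}_p(f)$. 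Either route works; I would favor the Pietsch-domination route as shorter. The set inclusion and the norm inequality together are precisely the assertion $\leq$ in the paper's notation. The main (and only) obstacle is the careful bookkeeping at the $p=\infty$ endpoint, specifically producing, for an arbitrary $f\in\Hv(U,F)$, a scalar $g\in B_{\Hv(U)}$ that simultaneously controls $v(x_i)\|f(x_i)\|$ — resolved by composing $f$ with a Hahn--Banach norming functional on $F$ and using $\|\cdot\|_v$-boundedness of the resulting scalar function, as above.
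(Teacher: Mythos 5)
Your preferred route is correct but genuinely different from the paper's. For $1\le p<q<\infty$ the paper argues directly from Definition \ref{def-p-summing weighted holomorphic}: it applies the $p$-summing inequality to the auxiliary scalars $\beta_i=|\lambda_i|^{q/p}v(x_i)^{q/p-1}\|f(x_i)\|^{q/p-1}$ and then uses H\"older's inequality with exponents $q/(q-p)$ and $q/p$, exactly as in the classical linear inclusion theorem. You instead pass through Theorem \ref{Pietsch} and the monotonicity of $L^r(\mu)$-norms for a probability measure, which is shorter and legitimate (Theorem \ref{Pietsch} precedes Proposition \ref{prop-1} and its proof does not depend on it, so there is no circularity), at the price of resting on the domination theorem rather than being self-contained. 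Your endpoint argument for $\Pi^{\Hv}_\infty(U,F)=\Hv(U,F)$ --- composing $f$ with a Hahn--Banach functional norming $f(x_i)$ so that $\langle y^*,f(\cdot)\rangle/\|f\|_v\in B_{\Hv(U)}$, then interchanging $\max_i$ and $\sup_g$ --- is sound and essentially matches the paper, which instead uses $\|T_f\|=\|f\|_v$ and $\|\delta_{x_i}\|=\sup_{g\in B_{\Hv(U)}}|g(x_i)|$ from Theorem \ref{t0}; your first displayed attempt at that step is invalid, but you retract it yourself. The case $q=\infty$ via the endpoint identity plus Remark \ref{rem-p-summing weighted holomorphic} also agrees with the paper.

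One correction: the ``alternative'' definition-based argument you sketch does not work, so the claim that ``either route works'' is false as written. If $\sup_{g\in B_{\Hv(U)}}\bigl(\sum_i|\lambda_i|^qv(x_i)^q|g(x_i)|^q\bigr)^{1/q}\le 1$, then indeed $a_i:=|\lambda_i|v(x_i)|g(x_i)|\le 1$ for all $i$ and $g$, but for numbers in $[0,1]$ and $p<q$ one has $a_i^p\ge a_i^q$, hence $\bigl(\sum_i a_i^p\bigr)^{1/p}\ge\bigl(\sum_i a_i^q\bigr)^{1/q}$ --- the opposite of the pointwise inequality you assert (take $a_1=\dots=a_n=1$: $n^{1/p}>n^{1/q}$). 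Since finite $\ell_p$-norms only decrease as $p$ increases, no normalization yields $\|\cdot\|_p\le\|\cdot\|_q$ pointwise in $g$; a direct proof genuinely needs the $\beta_i$ substitution and H\"older as in the paper. Because you choose the Pietsch-domination route as your actual proof, the proposal stands, but that alternative remark should be deleted or repaired.
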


\begin{proof}
Let $n\in\mathbb{N}$, $\lambda_1,\ldots,\lambda_n\in\mathbb{C}$ and $x_1,\ldots,x_n\in U$. We first prove the coincidence. Let $f\in\Pi^{\Hv}_\infty(U,F)$. For all $x\in U$, we have
$$
v(x)\|f(x)\|\leq\pi_\infty^{\Hv}(f)\sup_{g\in B_{\Hv(U)}}v(x)\left|g(x)\right|\leq\pi_\infty^{\Hv}(f).
$$
Hence $f\in\Hv(U,F)$ with $\left\|f\right\|_v\leq\pi^{\Hv}_\infty(f)$. Conversely, given $f\in\Hv(U,F)$, Theorem \ref{t0} provides 
\begin{align*}
\left|\lambda_i\right|v(x_i)\left\|f(x_i)\right\|
&=|\lambda_i|v(x_i)\|T_f(\delta_{x_i})\|\leq \|T_f\|\left|\lambda_i\right|v(x_i)\|\delta_{x_i}\|\\ 
&=\|f\|_v|\lambda_i|v(x_i)\sup_{g\in B_{\Hv(U)}}\left|g(x_i)\right|\\
&=\|f\|_v\sup_{g\in B_{\Hv(U)}}|\lambda_i|v(x_i)\left|g(x_i)\right|
\end{align*}
for all $i\in\{1,\ldots,n\}$, hence    
$$
\max_{1\leq i\leq n}\left|\lambda_i\right|v(x_i)\left\|f(x_i)\right\|\leq \|f\|_v\sup_{g\in B_{\Hv(U)}}\left(\max_{1\leq i\leq n}\left|\lambda_i\right|v(x_i)\left|g(x_i)\right|\right),
$$
and so $f\in\Pi^{\Hv}_\infty(U,F)$ with $\pi^{\Hv}_\infty(f)\leq \|f\|_v$.

We now prove the first inequality. Let $f\in\Pi^{\Hv}_p(U,F)$. Assume first $q<\infty$. Taking 
$$
\beta_i=\left|\lambda_i\right|^{\frac{q}{p}}v(x_i)^{\frac{q}{p}-1}\left\|f(x_i)\right\|^{\frac{q}{p}-1}\qquad (i=1,\ldots,n),
$$
we have 
\begin{align*}
\left(\sum_{i=1}^n\left|\lambda_i\right|^q v(x_i)^q\left\|f(x_i)\right\|^q\right)^{\frac{1}{p}}
&=\left(\sum_{i=1}^n\left|\beta_i\right|^p v(x_i)^p\left\|f(x_i)\right\|^p\right)^{\frac{1}{p}}\\
&\leq \pi^{\Hv}_p(f) \sup_{g\in B_{\Hv(U)}}\left(\sum_{i=1}^n\left|\beta_i\right|^p v(x_i)^p\left|g(x_i)\right|^p\right)^{\frac{1}{p}}.
\end{align*}
Since $q/p>1$ and $(q/p)^*=q/(q-p)$, H\"older inequality yields
\begin{align*}
\sup_{g\in B_{\Hv(U)}}\left(\sum_{i=1}^n\left|\beta_i\right|^pv(x_i)^p\left|g(x_i)\right|^p\right)^{\frac{1}{p}}
&=\sup_{g\in B_{\Hv(U)}}\left(\sum_{i=1}^n\left(\left|\lambda_i\right|v(x_i)\left\|f(x_i)\right\|\right)^{q-p}\left(\left|\lambda_i\right|v(x_i)\left|g(x_i)\right|\right)^p\right)^{\frac{1}{p}}\\
&\leq\left(\sum_{i=1}^n\left|\lambda_i\right|^qv(x_i)^q\left\|f(x_i)\right\|^q\right)^{\frac{1}{p}-\frac{1}{q}}\sup_{g\in B_{\Hv(U)}}\left(\sum_{i=1}^n\left|\lambda_i\right|^qv(x_i)^q\left|g(x_i)\right|^q\right)^{\frac{1}{q}},
\end{align*}
and thus we obtain 
$$
\left(\sum_{i=1}^n\left|\lambda_i\right|^qv(x_i)^q\left\|f(x_i)\right\|^q\right)^{\frac{1}{q}}
\leq \pi^{\Hv}_p(f) \sup_{g\in B_{\Hv(U)}}\left(\sum_{i=1}^n\left|\lambda_i\right|^qv(x_i)^q\left|g(x_i)\right|^q\right)^{\frac{1}{q}}.
$$
This shows that $f\in\Pi^{\Hv}_q(U,F)$ with $\pi^{\Hv}_q(f)\leq\pi^{\Hv}_p(f)$ if $q<\infty$. For $q=\infty$, the same follows from the coincidence and Remark \ref{rem-p-summing weighted holomorphic}.
\end{proof}

\subsection{Maurey extrapolation}

We apply Pietsch domination of $p$-summing weighted holomorphic mappings to give a weighted holomorphic version of Maurey Extrapolation Theorem \cite{Mau-74}. The proof of the next result is based on that of \cite[Theorem 3.17]{DisJarTon-95}. 

\begin{theorem}\label{Pietsch3} 
Let $1<p<q<\infty$ and assume $\Pi^{\Hv}_q(U,\ell_q)=\Pi^{\Hv}_p(U,\ell_q)$. Then $\Pi^{\Hv}_q(U,F)=\Pi^{\Hv}_1(U,F)$ for every complex Banach space $F$. 
\end{theorem}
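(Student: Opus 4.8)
The plan is to mimic the classical Maurey extrapolation argument from \cite[Theorem 3.17]{DisJarTon-95}, transplanting each step from the operator setting to the weighted holomorphic setting via the linearization of Theorem \ref{t0} and the Pietsch domination of Theorem \ref{Pietsch}. First I would reduce to the case $F=\ell_q$: by the injective ideal property (Proposition \ref{ideal summing}) and Proposition \ref{prop-1}, if $f\in\Pi^{\Hv}_q(U,F)$ then, composing with the canonical isometric embedding $\kappa_F\colon F\to\ell_\infty(B_{F^*})$, and then (after a standard separability reduction, restricting attention to the closed separable subspace spanned by the relevant countable family of values) realizing the image inside a copy of $\ell_q$, it suffices to prove $\Pi^{\Hv}_q(U,\ell_q)\subseteq\Pi^{\Hv}_1(U,\ell_q)$ under the hypothesis; actually one should be slightly careful here, so the cleaner route is to prove directly that for \emph{every} $F$ the hypothesis forces $\Pi^{\Hv}_q(U,F)=\Pi^{\Hv}_1(U,F)$ by running the iteration intrinsically, using $\ell_q$ only as the auxiliary space in which the domination measures are combined.

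The core of the argument is the following iteration. Given $f\in\Pi^{\Hv}_q(U,F)$, I want to show $f\in\Pi^{\Hv}_r(U,F)$ for some $r<q$, with a quantitative estimate, and then iterate. Fix a finite family $x_1,\dots,x_n\in U$ and scalars $\lambda_1,\dots,\lambda_n$. The trick, exactly as in the classical proof, is to form, for each $k\ge 1$, the mapping $f_k\colon U\to \ell_q(\{1,\dots,n\})$ (or into $\ell_q$ suitably) built from $q/p$-scaled copies of $f$ evaluated at the $x_i$'s; the hypothesis $\Pi^{\Hv}_q(U,\ell_q)=\Pi^{\Hv}_p(U,\ell_q)$ supplies, via the closed graph theorem applied to the identity map between these two Banach ideal norms (both complete by Proposition \ref{ideal summing}), a universal constant $\rho$ with $\pi^{\Hv}_p(g)\le\rho\,\pi^{\Hv}_q(g)$ for all such $g$ into $\ell_q$. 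Feeding this into the Pietsch domination inequality of Theorem \ref{Pietsch} for $f_k$ and carefully tracking the $v(x_i)$-weights (which behave exactly like the scalars $\lambda_i$ do, so the bookkeeping is identical to the linear case with $\lambda_i v(x_i)\|f(x_i)\|$ playing the role of $\|Tx_i\|$), one obtains the key recursion: if $f\in\Pi^{\Hv}_{p_k}$ with constant $C_k$ then $f\in\Pi^{\Hv}_{p_{k+1}}$ with $C_{k+1}\le \rho^{\theta}C_k^{1-\theta}\pi^{\Hv}_q(f)^{\theta}$ for an explicit sequence $p_k\downarrow$ and $\theta\in(0,1)$ depending on $p,q$. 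Because $p<q$ strictly, the exponents $p_k$ decrease geometrically past $1$ in finitely many steps, and the constants stay bounded; this yields $f\in\Pi^{\Hv}_1(U,F)$ with $\pi^{\Hv}_1(f)\le K(p,q)\,\pi^{\Hv}_q(f)$. The reverse inclusion $\Pi^{\Hv}_1\subseteq\Pi^{\Hv}_q$ is immediate from Proposition \ref{prop-1}.

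For the self-improvement step itself I would argue as follows at the level of Pietsch measures. Assume $\mu\in\P(B_{\Hv(U)})$ dominates $f$ in the sense of Theorem \ref{Pietsch}(ii) with exponent $p_k$ and constant $C_k$. Apply the hypothesis-plus-closed-graph constant $\rho$ to the auxiliary mappings to produce a second measure $\nu\in\P(B_{\Hv(U)})$ controlling the relevant $\ell_q$-valued map with the smaller exponent $p$; then interpolate the two integral inequalities via H\"older (with the conjugate pair determined by $1/p_{k+1}=\theta/p+(1-\theta)/p_k$) against the product measure handled through Ky Fan's lemma or, more elementarily, through the convexity argument in \cite[Lemma 3.16, Theorem 3.17]{DisJarTon-95}, which carries over verbatim once one replaces $B_{E^*}$ by $B_{\Hv(U)}$ with its weak* topology (a compact Hausdorff space by Theorem \ref{t0}) and replaces $|x^*(x_i)|$ by $|g(x_i)|$. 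The main obstacle, and the only place real care is needed, is precisely this transcription of the Ky Fan / convexity lemma: in the linear setting it uses that the seminorms $x\mapsto(\int|x^*(x)|^p d\mu)^{1/p}$ sit inside $C(B_{E^*})$ and that one can separate convex sets there, and I must check that the analogous seminorms $x\mapsto(\int|g(x)|^p d\mu(g))^{1/p}$, together with the evaluation structure $x\mapsto(g\mapsto g(x))\in C(B_{\Hv(U)})$ already used in the proof of Theorem \ref{Pietsch2}, make the same Hahn--Banach separation go through — which they do, since $\iota_U(x)(g)=g(x)$ embeds $U$ continuously into $C(B_{\Hv(U)})$ with $\|\iota_U\|_v\le 1$. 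Everything else is routine once the weights are treated as absorbed into the $\lambda_i$'s.
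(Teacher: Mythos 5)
Your use of the Closed Graph Theorem to extract a uniform constant with $\pi^{\Hv}_p\le C\,\pi^{\Hv}_q$ on $\Pi^{\Hv}_q(U,\ell_q)$, and the appeal to Proposition \ref{prop-1} for the easy inclusion $\Pi^{\Hv}_1\subseteq\Pi^{\Hv}_q$, both match the paper. But the core of your argument has a genuine gap: the recursion you propose, with exponents governed by $1/p_{k+1}=\theta/p+(1-\theta)/p_k$, has fixed point $p$, so starting from $p_0=q$ the sequence $(p_k)$ decreases to $p>1$ and can never ``pass $1$ in finitely many steps''. Interpolating between data at exponents $p$ and $p_k$ cannot produce any exponent below $p$; the whole difficulty of extrapolation is to reach $1<p$, and this is achieved not by iterating exponents but by iterating \emph{measures}. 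In the paper (following \cite[Theorem 3.17]{DisJarTon-95}) the hypothesis is applied to the canonical maps $\iota_q\circ h_\mu\colon U\to L_q(\mu)$ with $h_\mu=j_\infty\circ\iota_U$; the $\mathcal{L}_{q,\lambda}$-structure of $L_q(\mu)$ is what lets the $\ell_q$-hypothesis act, since the span of finitely many values $\iota_q(h_\mu(x_i))$ embeds $\lambda$-isomorphically into $\ell_q$, giving $\pi^{\Hv}_p(\iota_q\circ h_\mu)\le C$ and hence, by Theorem \ref{Pietsch}, a new measure $\widehat{\mu}$ with $\|(\iota_q\circ h_\mu)(x)\|_{L_q(\mu)}\le C\|(\iota_q\circ h_{\widehat{\mu}})(x)\|_{L_p(\widehat{\mu})}$. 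One then sets $\mu_{n+1}=\widehat{\mu_n}$, mixes $\lambda_0=\sum_{n}2^{-(n+1)}\mu_n$, and uses Littlewood's inequality $\|\cdot\|_{L_p}\le\|\cdot\|_{L_1}^{\theta}\|\cdot\|_{L_q}^{1-\theta}$ (interpolation between the endpoints $1$ and $q$, not between $p$ and $p_k$), H\"older, and absorption of the convergent series $\sum_n 2^{-(n+1)}\|(\iota_q\circ h_{\mu_n})(x)\|_{L_q(\mu_n)}$ to obtain the single comparison $\|(\iota_q\circ h_{\mu_0})(x)\|_{L_q(\mu_0)}\le c\,\|(\iota_q\circ h_{\lambda_0})(x)\|_{L_1(\lambda_0)}$; combined with Pietsch domination of an arbitrary $f\in\Pi^{\Hv}_q(U,F)$ this gives $f\in\Pi^{\Hv}_1(U,F)$ in one shot, for every $F$. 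Your sketch contains neither the measure iteration nor the absorption step, and your auxiliary maps $f_k\colon U\to\ell_q(\{1,\dots,n\})$ ``built from $q/p$-scaled copies of $f$'' are not well defined: the values of $f$ lie in $F$, so they do not yield an $\ell_q$-valued holomorphic map; the $\ell_q$-valued (really $L_q(\mu)$-valued) maps to which the hypothesis is applied must come from the scalar evaluation structure $\iota_U$, as above.

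Two further points. The reduction floated in your first paragraph (embedding a separable piece of an arbitrary $F$ into $\ell_q$) is not available, since general separable Banach spaces do not embed into $\ell_q$; you rightly retracted it, but the ``intrinsic iteration'' you substituted is precisely the flawed exponent scheme. And Ky Fan's lemma is what underlies the Pietsch domination theorem itself (handled in the paper through the abstract result of \cite{PelSan-11}); it is not invoked again in the extrapolation step, where only Littlewood's and H\"older's inequalities together with the geometric mixing of the iterated Pietsch measures are needed.
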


\begin{proof}
For each $\mu\in\P(B_{\Hv(U)})$, it is clear that $h_\mu:=j_{\infty}\circ\iota_U\in\Hv(U,L_\infty(\mu))$ with $\|h_\mu\|_v\leq 1$. Let $\iota_q\colon L_\infty(\mu)\to L_q(\mu)$ be the formal inclusion operator. Note that $\iota_q\in\Pi_q(L_\infty(\mu),L_q(\mu))$ with $\pi_p(\iota_q)=1$ (see \cite[Examples 2.9]{DisJarTon-95}). Then $\iota_q\circ h_\mu\in\Pi^{\Hv}_q(U,L_q(\mu))$ with $\pi_q^{\Hv}(\iota_q\circ h_\mu)\leq 1$ by Corollary \ref{new corollary}.  

Since $\Pi^{\Hv}_q(U,\ell_q)=\Pi^{\Hv}_p(U,\ell_q)$ and $\pi^{\Hv}_q\leq\pi^{\Hv}_p$ on $\Pi^{\Hv}_p(U,\ell_q)$ by Proposition \ref{prop-1}, the Closed Graph Theorem yields a constant $C\geq0$ such that $\pi_{p}^{\Hv}(f)\leq C\pi_{q}^{\Hv}(f)$ for all $f\in\Pi^{\Hv}_q(U,\ell_q)$. Since $L_q(\mu)$ is an $\mathcal{L}_{q,\lambda}$-space for each $\lambda>1$, we can assure that given $n\in\N$ and $x_1,\ldots,x_n\in U$, the subspace 
$$
G_0=\lin\left(\left\{\iota_q(h_\mu(x_1)),\ldots,\iota_q(h_\mu(x_n))\right\}\right)\subseteq L_q(\mu)
$$ 
embeds $\lambda$-isomorphically into $\ell_q$, that is, $G_0$ is contained in a subspace $G\subseteq L_q(\mu)$ for which there exists an isomorphism $T\colon G\to\ell_{q}$ with $\left\|T\right\|||T^{-1}||<\lambda$. 

Since $T\circ \iota_q\circ h_\mu\in\Pi^{\Hv}_q(U,\ell_q)=\Pi^{\Hv}_p(U,\ell_q)$, we have 
\begin{align*}
&\left(\sum_{i=1}^n\left|\lambda_i\right|^pv(x_i)^p\left\|(\iota_q\circ h_\mu)(x_i)\right\|_{L_q(\mu)}^p\right)^{\frac{1}{p}}\\
&\leq \left\|T^{-1}\right\|\left(\sum_{i=1}^n\left|\lambda_i\right|^p v(x_i)^p\left\|T(\iota_q(h_\mu(x_i)))\right\|_{\ell_{q}}^p\right)^{\frac{1}{p}}\\
&\leq \left\|T^{-1}\right\|C\pi_q^{\Hv}(T\circ \iota_q\circ h_\mu)\sup_{g\in B_{\Hv(U)}}\left(\sum_{i=1}^n\left|\lambda_i\right|^pv(x_i)^p\left|g(x_i)\right|^p\right)^{\frac{1}{p}}\\
&\leq C\left\|T^{-1}\right\|\left\|T\right\|\pi_q^{\Hv}(\iota_q\circ h_\mu)\sup_{g\in B_{\Hv(U)}}\left(\sum_{i=1}^n\left|\lambda_i\right|^pv(x_i)^p\left|g(x_i)\right|^p\right)^{\frac{1}{p}},
\end{align*}
therefore $\pi_p^{\Hv}(\iota_q\circ h_\mu)\leq C\lambda$ for all $\lambda>1$, and thus $\pi_p^{\Hv}(\iota_q\circ h_\mu)\leq C$. Now, by Theorem \ref{Pietsch}, there exists a measure $\widehat{\mu}\in\P(B_{\Hv(U)})$ such that
$$
\left\|(\iota_q\circ h_\mu)(x)\right\|_{L_q(\mu)}
\leq C\left(\int_{B_{\Hv(U)}}\left|g(x)\right|^p\ d\widehat{\mu}(g)\right)^{\frac{1}{p}}
=C\left\|(\iota_q\circ h_{\widehat{\mu}})(x)\right\|_{L_p(\widehat{\mu})}
$$
for all $x\in U$. In the last equality, we have used that 
\begin{align*}
(\iota_q\circ h_{\widehat{\mu}})(x)(g)&=\iota_q(h_{\widehat{\mu}}(x))(g)=h_{\widehat{\mu}}(x)(g)\\
&=j_{\infty}(\iota_U(x))(g)=\iota_U(x)(g)=g(x)
\end{align*}
for all $x\in U$ and $g\in B_{\Hv(U)}$. 

Take a complex Banach space $F$ and let $f\in\Pi^{\Hv}_q(U,F)$. In view of Proposition \ref{prop-1}, we only must show that $f\in\Pi^{\Hv}_1(U,F)$. Theorem \ref{Pietsch} provides again a measure $\mu_0\in\P(B_{\Hv(U)})$ such that
$$
\left\|f(x)\right\|\leq\pi_q^{\Hv}(f)\left\|(\iota_q\circ h_{\mu_0})(x)\right\|_{L_q(\mu_0)}
$$
for all $x\in U$. We claim that there is a constant $c\geq0$ and a measure $\lambda_0\in\P(B_{\Hv(U)})$ such that
$$
\left\|(\iota_q\circ h_{\mu_0})(x)\right\|_{L_q(\mu_0)}\leq c\left\|(\iota_q\circ h_{\lambda_0})(x)\right\|_{L_1(\lambda_0)}
$$
for all $x\in U$. Indeed, define $\lambda_0=\sum_{n=0}^\infty(1/2^{n+1})\mu_n\in\P(B_{\Hv(U)})$, where $(\mu_n)_{n\geq 1}$ is the sequence in $\P(B_{\Hv(U)})$ given by $\mu_{n+1}=\widehat{\mu_n}$ for all $n\in\N_0$, where the measure $\widehat{\mu_n}$ is defined using Theorem \ref{Pietsch}. As $1<p<q$, then $\theta:=(q-p)/p(q-1)\in (0,1)$ and $1/p=\theta+(1-\theta)/q$, and applying Littlewood's Inequality we have 
\begin{align*}
&\left\|(\iota_q\circ h_{\mu_n})(x)\right\|_{L_p(\mu_n)}
=\left(\int_{B_{\Hv(U)}}\left|(\iota_q\circ h_{\mu_n})(x)(g)\right|^{p}\ d\mu_n(g)\right)^{\frac{1}{p}}\\
&\leq\left(\int_{B_{\Hv(U)}}\left|(\iota_q\circ h_{\mu_n})(x)(g)\right|\ d\mu_n(g)\right)^{\theta}
\left(\int_{B_{\Hv(U)}}\left|(\iota_q\circ h_{\mu_n})(x)(g)\right|^q\ d\mu_n(g)\right)^{\frac{1-\theta}{q}}\\
&=\left\|(\iota_q\circ h_{\mu_n})(x)\right\|^{\theta}_{L_1(\mu_n)}\left\|(\iota_q\circ h_{\mu_n})(x)\right\|^{1-\theta}_{L_q(\mu_n)}
\end{align*}
for each $n\in\N_0$ and all $x\in U$. Using H\"older's Inequality and the inequality 
\begin{align*}
\sum_{n=0}^\infty\frac{1}{2^{n+1}}\left\|(\iota_q\circ h_{\mu_{n+1}})(x)\right\|_{L_q(\mu_{n+1})}
&\leq\sum_{n=0}^\infty\frac{1}{2^{n+1}}\left\|(\iota_q\circ h_{\mu_{n+1}})(x)\right\|_{L_q(\mu_{n+1})}+\left\|(\iota_q\circ h_{\mu_0})(x)\right\|_{L_q(\mu_0)}\\ 
&=\sum_{n=0}^\infty\frac{1}{2^{n}}\left\|(\iota_q\circ h_{\mu_{n}})(x)\right\|_{L_q(\mu_{n})}
=2\sum_{n=0}^\infty\frac{1}{2^{n+1}}\left\|(\iota_q\circ h_{\mu_{n}})(x)\right\|_{L_q(\mu_{n})},
\end{align*}
we now obtain 
\begin{align*}
&\sum_{n=0}^\infty\frac{1}{2^{n+1}}\left\|(\iota_q\circ h_{\mu_n})(x)\right\|_{L_q(\mu_n)}
\leq C\sum_{n=0}^\infty\frac{1}{2^{n+1}}\left\|(\iota_q\circ h_{\mu_{n+1}})(x)\right\|_{L_p(\mu_{n+1})}\\
&\leq C\sum_{n=0}^\infty\frac{1}{2^{n+1}}\left\|(\iota_q\circ h_{\mu_{n+1}})(x)\right\|^{\theta}_{L_1(\mu_{n+1})}\left\|(\iota_q\circ h_{\mu_{n+1}})(x)\right\|^{1-\theta}_{L_q(\mu_{n+1})}\\
&\leq C\left(\sum_{n=0}^\infty\frac{1}{2^{n+1}}\left\|(\iota_q\circ h_{\mu_{n+1}})(x)\right\|_{L_1(\mu_{n+1})}\right)^{\theta}
\left(\sum_{n=0}^\infty\frac{1}{2^{n+1}}\left\|(\iota_q\circ h_{\mu_{n+1}})(x)\right\|_{L_q(\mu_{n+1})}\right)^{1-\theta}\\
&\leq C\left(\sum_{n=0}^\infty\frac{1}{2^{n+1}}\left\|(\iota_q\circ h_{\mu_{n+1}})(x)\right\|_{L_1(\mu_{n+1})}\right)^{\theta}
\left(2\sum_{n=0}^\infty\frac{1}{2^{n+1}}\left\|(\iota_q\circ h_{\mu_{n}})(x)\right\|_{L_q(\mu_{n})}\right)^{1-\theta}
\end{align*}
for all $x\in U$, and we deduce that
\begin{align*}
\sum_{n=0}^\infty\frac{1}{2^{n+1}}\left\|(\iota_q\circ h_{\mu_n})(x)\right\|_{L_q(\mu_n)}
&\leq C^{\frac{1}{\theta}}2^{\frac{1-\theta}{\theta}}\left(\sum_{n=0}^\infty\frac{1}{2^{n+1}}\left\|(\iota_q\circ h_{\mu_{n+1}})(x)\right\|_{L_1(\mu_{n+1})}\right)\\
&\leq C^{\frac{1}{\theta}}2^{\frac{1-\theta}{\theta}}2\left(\sum_{n=0}^\infty\frac{1}{2^{n+1}}\left\|(\iota_q\circ h_{\mu_{n}})(x)\right\|_{L_1(\mu_{n})}\right)\\
&=(2C)^{\frac{1}{\theta}}\left\|(\iota_q\circ h_{\lambda_0})(x)\right\|_{L_1(\lambda_0)}
\end{align*}
for all $x\in U$. From above, we deduce that 
$$
\frac{1}{2}\left\|(\iota_q\circ h_{\mu_0})(x)\right\|_{L_q(\mu_0)}\leq (2C)^{\frac{1}{\theta}}\left\|(\iota_q\circ h_{\lambda_0})(x)\right\|_{L_1(\lambda_0)}
$$
for all $x\in U$, and this proves our claim taking $c=2(2C)^{\frac{1}{\theta}}$. Thus we can write
\begin{align*}
\left\|f(x)\right\|
&\leq c\pi_q^{\Hv}(f)\left\|(\iota_q\circ h_{\lambda_0})(x)\right\|_{L_1(\lambda_0)}\\
&=c\pi_q^{\Hv}(f)\int_{B_{\Hv(U)}}\left|g(x)\right|\ d\lambda_0(g)
\end{align*}
for all $x\in U$. Hence $f\in\Pi^{\Hv}_1(U,F)$ with $\pi_1^{\Hv}(f)\leq c\pi_q^{\Hv}(f)$ by Theorem \ref{Pietsch}.
\end{proof}


\section{Banach-valued $\Hv$-molecules}\label{3}

Our main objective in this section is to study the duality of the spaces of $p$-summing weighted holomorphic mappings. With this purpose, we will first establish some results and introduce some concepts influenced by the theory of tensor products (see the monograph \cite{Rya-02}). 

\begin{proposition}\label{prop-navidad}
Let $x\in U$ and $y\in F$. Then $v(x)\delta_x\otimes y\colon\Hv(U,F^*)\to\mathbb{C}$ given by  
$$
(v(x)\delta_x\otimes y)(f)=\left\langle v(x)f(x),y\right\rangle\qquad \left(f\in\Hv(U,F^*)\right), 
$$
belongs to $\Hv(U,F^*)^*$ and $\left\|v(x)\delta_x\otimes y\right\|=v(x)\left\|\delta_x\right\|\left\|y\right\|$. 
\end{proposition}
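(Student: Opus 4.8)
The plan is to prove the two inequalities $\|v(x)\delta_x\otimes y\|\le v(x)\|\delta_x\|\|y\|$ and $\|v(x)\delta_x\otimes y\|\ge v(x)\|\delta_x\|\|y\|$ separately, the second one by exhibiting an explicit norming element. Linearity of $v(x)\delta_x\otimes y$ is immediate from the linearity of $f\mapsto f(x)$ and of $\langle\cdot,y\rangle$, and the case $y=0$ is trivial (both sides vanish), so I assume $y\neq 0$.

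For the upper bound, fix $f\in\Hv(U,F^*)$ with $f\neq 0$ and consider the scalar-valued map $g\colon U\to\C$ given by $g(z)=\langle f(z),y\rangle/\|y\|$. It is holomorphic, being the composition of $f$ with the bounded linear functional $\langle\cdot,y\rangle$, and $v(z)|g(z)|=v(z)|\langle f(z),y\rangle|/\|y\|\le v(z)\|f(z)\|\le\|f\|_v$ for all $z\in U$, so $g\in\Hv(U)$ with $\|g\|_v\le\|f\|_v$; hence $g/\|f\|_v\in B_{\Hv(U)}$. Evaluating at $x$ and using $\|\delta_x\|=\sup_{h\in B_{\Hv(U)}}|h(x)|$ gives $|\langle f(x),y\rangle|\le\|y\|\,\|f\|_v\,\|\delta_x\|$. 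Multiplying by $v(x)$ yields
$$
|(v(x)\delta_x\otimes y)(f)|=v(x)|\langle f(x),y\rangle|\le v(x)\|\delta_x\|\,\|y\|\,\|f\|_v,
$$
which is the required estimate; in particular $v(x)\delta_x\otimes y\in\Hv(U,F^*)^*$ with $\|v(x)\delta_x\otimes y\|\le v(x)\|\delta_x\|\|y\|$.

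For the reverse inequality I produce a single $f$ at which the bound is attained. By Theorem \ref{t0}(ii) there is $g_x\in B_{\Hv(U)}$ with $g_x(x)=\|\delta_x\|$, and by the Hahn--Banach theorem there is $y^*\in F^*$ with $\|y^*\|=1$ and $\langle y,y^*\rangle=\|y\|$. Set $f:=g_x\cdot y^*\colon z\mapsto g_x(z)y^*$; then $f\in\Hv(U,F^*)$ with $\|f\|_v=\|g_x\|_v\|y^*\|\le 1$, and
$$
(v(x)\delta_x\otimes y)(f)=\langle v(x)g_x(x)y^*,y\rangle=v(x)g_x(x)\langle y,y^*\rangle=v(x)\|\delta_x\|\,\|y\|.
$$
Combining this with the upper bound gives $\|v(x)\delta_x\otimes y\|=v(x)\|\delta_x\|\|y\|$.

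I do not anticipate any real obstacle; the only mildly delicate point is the passage from the vector-valued $f$ to a scalar function lying in $B_{\Hv(U)}$, which is handled by pairing with $y/\|y\|$ and observing that the resulting composition is still holomorphic.
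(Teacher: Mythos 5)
Your proof is correct, and its key step---the lower bound via the norming element $g_x\cdot y^*$, with $g_x\in B_{\Hv(U)}$ from Theorem \ref{t0}(ii) satisfying $g_x(x)=\|\delta_x\|$ and $y^*$ a Hahn--Banach norming functional for $y$---is exactly the paper's argument. The only divergence is in the upper bound: the paper estimates $v(x)|\langle f(x),y\rangle|\le v(x)\|f(x)\|\,\|y\|$ and then invokes the linearization of Theorem \ref{t0}(iv), writing $\|f(x)\|=\|T_f(\delta_x)\|\le\|T_f\|\,\|\delta_x\|=\|f\|_v\|\delta_x\|$, whereas you scalarize directly, pairing $f$ with $y/\|y\|$ to obtain $g\in\Hv(U)$ with $\|g\|_v\le\|f\|_v$ and then using only the definition $\|\delta_x\|=\sup_{h\in B_{\Hv(U)}}|h(x)|$. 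Both are valid and give the same estimate; your route is marginally more self-contained, since it needs only part (ii) of Theorem \ref{t0} plus the observation that $z\mapsto\langle f(z),y\rangle$ is holomorphic (composition of $f$ with evaluation at $y$), while the paper's is a one-line consequence of the linearization machinery it has already set up and uses throughout. The small cases ($y=0$, $f=0$) are handled correctly in your write-up.
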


\begin{proof}
Clearly, $v(x)\delta_x\otimes y$ is linear. For any $f\in\Hv(U,F^*)$, we have
\begin{align*}
\left|(v(x)\delta_x\otimes y)(f)\right|&=\left|\left\langle v(x)f(x),y\right\rangle\right|\leq v(x)\left\|f(x)\right\|\left\|y\right\|=v(x)\left\|T_f(\delta_x)\right\|\left\|y\right\|\\
&\leq v(x)\left\|T_f\right\|\left\|\delta_x\right\|\left\|y\right\|=\left\|f\right\|_vv(x)\left\|\delta_x\right\|\left\|y\right\|,
\end{align*}
and thus $v(x)\delta_x\otimes y\in\Hv(U,F^*)^*$ with $\left\|v(x)\delta_x\otimes y\right\|\leq v(x)\left\|\delta_x\right\|\left\|y\right\|$. For the converse inequality, take $y^*\in B_{F ^*}$ such that $\left|\left\langle y^*,y\right\rangle\right|=\left\|y\right\|$ and consider $g_x\cdot y^*\in\Hv(U,F^*)$ (see Theorem \ref{t0}). Since $\left\|g_x\cdot y^*\right\|_v=\left\|g_x\right\|_v\left\|y^*\right\|\leq 1$, we deduce that 
\begin{align*}
&\left\|v(x)\delta_x\otimes y\right\|\geq \left|(v(x)\delta_x\otimes y)(g_x\cdot y^*)\right|=v(x)\left|\left\langle (g_x\cdot y^*)(x),y\right\rangle\right|\\
&=v(x)\left|\left\langle g_x(x)y^*,y\right\rangle\right|=v(x)\left|g_x(x)\right|\left|\left\langle y^*,y\right\rangle\right|=v(x)\left\|\delta_x\right\|\left\|y\right\|.
\end{align*}
\end{proof}

The elements of the following tensor product space could be referred to as $F$-valued $\Hv$-molecules on $U$. 

\begin{definition}
Let $E$ and $F$ be complex Banach spaces, let $U$ be an open subset of $E$ and let $v$ be a weight on $U$. Define the linear space 
$$
\lin((v\Delta_v)(U))\otimes F:=\lin\left\{v(x)\delta_x\otimes y\colon x\in U,\, y\in F\right\}\subseteq \Hv(U,F^*)^*.
$$
\end{definition}

Each element $\gamma\in\lin((v\Delta_v)(U))\otimes F$ can be expressed in a form (not necessarily unique): 
$$
\gamma=\sum_{i=1}^n\lambda_iv(x_i)\delta_{x_i}\otimes y_i,
$$
where $n\in\mathbb{N}$, $\lambda_i\in\mathbb{C}$, $x_i\in U$ and $y_i\in F$ for $i=1,\ldots,n$; and its action as a functional on a mapping $f\in\Hv(U,F^*)$ comes given by    
$$
\gamma(f)=\sum_{i=1}^n\lambda_iv(x_i)\left\langle f(x_i),y_i\right\rangle . 
$$


\subsection{Projective norm}
 
Given two linear spaces $E$ and $F$, the tensor product space $E\otimes F$, equipped with a norm $\alpha$, is usually denoted by $E\otimes_\alpha F$, and the completion of $E\otimes_\alpha F$ by $E\widehat{\otimes}_\alpha F$. Consider the projective norm $\pi$ on $u\in E\otimes F$, defined by 
$$
\pi(u)=\inf\left\{\sum_{i=1}^n\|x_i\|\left\|y_i\right\|\right\},
$$
where the infimum is taken over all such representations of $u$ as $\sum_{i=1}^n x_i\otimes y_i$ for some $n\in\mathbb{N}$, $(x_i)_{i=1}^n\in E^n$ and $(y_i)_{i=1}^n\in F^n$. 

We now show that the projective norm and the operator canonical norm coincide on the space of $F$-valued $\Hv$-molecules on $U$.

\begin{proposition}\label{teo-L} 
Let $\gamma\in\lin((v\Delta_v)(U))\otimes F$. Then $\left\|\gamma\right\|=\pi(\gamma)$, where 
$$
\left\|\gamma\right\|=\sup\left\{\left|\gamma(f)\right|\colon f\in\Hv(U,F^*), \left\|f\right\|_v\leq 1 \right\}
$$
and 
$$
\pi(\gamma)=\inf\left\{\sum_{i=1}^n\left|\lambda_i\right|v(x_i)\left\|\delta_{x_i}\right\|\left\|y_i\right\|\colon \gamma=\sum_{i=1}^n\lambda_iv(x_i)\delta_{x_i}\otimes y_i\right\}.
$$
\end{proposition}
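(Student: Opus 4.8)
The plan is to prove the two inequalities $\pi(\gamma)\leq\|\gamma\|$ and $\|\gamma\|\leq\pi(\gamma)$ separately, using Proposition \ref{prop-navidad} and the linearization machinery of Theorem \ref{t0}.

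First I would prove $\|\gamma\|\leq\pi(\gamma)$, which is the easy direction. Given any representation $\gamma=\sum_{i=1}^n\lambda_i v(x_i)\delta_{x_i}\otimes y_i$ and any $f\in\Hv(U,F^*)$ with $\|f\|_v\leq 1$, the triangle inequality together with the estimate $|(v(x_i)\delta_{x_i}\otimes y_i)(f)|\leq v(x_i)\|\delta_{x_i}\|\|y_i\|$ from (the proof of) Proposition \ref{prop-navidad} gives $|\gamma(f)|\leq\sum_{i=1}^n|\lambda_i|v(x_i)\|\delta_{x_i}\|\|y_i\|$. Taking the supremum over such $f$ and then the infimum over all representations yields $\|\gamma\|\leq\pi(\gamma)$.

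For the reverse inequality $\pi(\gamma)\leq\|\gamma\|$, the natural approach is to pass through the linearization. By Theorem \ref{t0}(iv), the assignment $f\mapsto T_f$ is an isometric identification of $\Hv(U,F^*)$ with $\L(\Gv(U),F^*)$, which in turn (since $F^*$ is a dual space) identifies isometrically with $\L(F,\Gv(U)^*)=\L(F,\Hv(U))$ via $J_v$ from Theorem \ref{t0}(i); chasing the definitions, the functional $v(x)\delta_x\otimes y$ corresponds to the evaluation $S\mapsto\langle S(y),v(x)\delta_x\rangle$ on $\L(F,\Hv(U))$. Thus $\lin((v\Delta_v)(U))\otimes F$ sits isometrically inside $\L(F,\Hv(U))^*$, and on the algebraic tensor product $\Gv(U)\otimes F$ the functional norm induced by $\L(F,\Gv(U)^*)$ is exactly the projective tensor norm $\pi$ computed with the norms of $\Gv(U)$ and $F$; see the standard identification $(\Gv(U)\widehat\otimes_\pi F)^* = \L(F,\Gv(U)^*)$ in \cite{Rya-02}. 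Since $\|v(x)\delta_x\|_{\Gv(U)}=v(x)\|\delta_x\|$ by Theorem \ref{t0}(ii)--(iii) and the $\Gv(U)$-norm restricted to $\lin((v\Delta_v)(U))$ is inherited from $\Hv(U)^*$, the projective norm of $\gamma$ as an element of $\Gv(U)\otimes F$ equals the quantity $\pi(\gamma)$ in the statement. Therefore $\pi(\gamma)=\|\gamma\|_{(\Gv(U)\widehat\otimes_\pi F)^*}=\|\gamma\|_{\Hv(U,F^*)^*}$, which is the claimed equality.

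The main obstacle I expect is bookkeeping the chain of isometric identifications carefully — in particular verifying that the norm $\|v(x)\delta_x\|$ appearing in the definition of $\pi(\gamma)$ is genuinely the $\Gv(U)$-norm of the element $v(x)\delta_x$ (this uses Theorem \ref{t0}(ii), which produces a norming $g_x\in B_{\Hv(U)}$), and that the pairing between $\gamma$ and $f\in\Hv(U,F^*)$ matches the canonical pairing between $\Gv(U)\otimes F$ and $\L(F,\Gv(U)^*)$ under the identification $T_f\leftrightarrow f$. An alternative, more self-contained route that avoids invoking the tensor-product duality theorem is a direct Hahn--Banach argument: to show $\pi(\gamma)\leq\|\gamma\|$, one fixes $\varepsilon>0$, considers $\gamma$ as an element of the Banach space $\lin((v\Delta_v)(U))\widehat\otimes_\pi F$, and uses a norming functional, i.e. an operator in $\L(F,(\lin((v\Delta_v)(U)))^*)$; one then checks that such an operator extends to an element of $\L(F,\Gv(U)^*)\cong\Hv(U,F^*)$ without increasing the norm, using that $\lin((v\Delta_v)(U))$ is a subspace of $\Gv(U)$ and that $\Gv(U)^*\cong\Hv(U)$. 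Either way, the only real work is the identification; the inequalities themselves are then immediate.
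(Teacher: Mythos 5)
Your easy direction ($\|\gamma\|\leq\pi(\gamma)$) is fine and matches the paper. The problem is the other direction: both of your routes rest on an identification that is precisely the non-trivial content of the proposition, and you never prove it. The duality $(\Gv(U)\widehat\otimes_\pi F)^*\cong\L(\Gv(U),F^*)\cong\Hv(U,F^*)$ (via $f\mapsto T_f$) only gives, by Hahn--Banach, that $\|\gamma\|$ equals the \emph{projective} norm of $\gamma$ computed in $\Gv(U)\otimes F$, where representations $\gamma=\sum_i\phi_i\otimes y_i$ with arbitrary $\phi_i\in\Gv(U)$ (or arbitrary $\phi_i\in\lin((v\Delta_v)(U))$) are allowed. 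But the quantity $\pi(\gamma)$ in the statement is the ``atomic'' infimum taken only over representations whose first factors are scalar multiples of point evaluations $\delta_{x_i}$. In general these two infima do \emph{not} coincide: e.g.\ in $\ell_2^2\otimes\C$ with the spanning set $\{e_1,e_2\}$, the element $(e_1+e_2)\otimes 1$ has projective norm $\sqrt2$ but atomic infimum $2$. So your sentence ``the projective norm of $\gamma$ as an element of $\Gv(U)\otimes F$ equals the quantity $\pi(\gamma)$ in the statement'' is exactly the inequality $\pi(\gamma)\leq\|\gamma\|$ that has to be proved; asserting it via ``bookkeeping the chain of isometric identifications'' is circular.

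Your alternative route (Hahn--Banach norming functional $\eta$ on $(\lin((v\Delta_v)(U))\otimes F,\pi)$) is the right idea and is what the paper does, but the step you wave at --- ``such an operator extends to an element of $\L(F,\Gv(U)^*)\cong\Hv(U,F^*)$ without increasing the norm, using that $\lin((v\Delta_v)(U))$ is a subspace of $\Gv(U)$'' --- is not a proof. Norm-preserving lifting of an operator $F\to M^*$ ($M=\lin((v\Delta_v)(U))$) through the restriction $\Gv(U)^*\to M^*$ is not automatic, and in fact no extension is needed at all, since every $\delta_x$ already lies in $M$: one simply sets $\langle f_\eta(x),y\rangle=\eta(\delta_x\otimes y)$. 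The genuine work, which your proposal never addresses, is to show that this $f_\eta$ is \emph{holomorphic} on $U$ (the bound $\|f_\eta\|_v\leq 1$ is easy). The paper spends the bulk of its argument on exactly this point: it uses a separation argument to get $\eta$, then shows that for each $y\in F$ the scalar function $x\mapsto\eta(\delta_x\otimes y)$ is differentiable, with candidate derivative $x\mapsto\eta(D\Delta_v(a)(x)\otimes y)$ built from the derivative of $\Delta_v$ (Theorem \ref{t0}(iii)), and invokes the weak-holomorphy criterion of \cite[Exercise 8.D]{Muj-86} to conclude $f_\eta\in\Hv(U,F^*)$, whence $\|\gamma\|\geq|\gamma(f_\eta)|=|\eta(\gamma)|$ yields the contradiction. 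Without this holomorphy verification (or some substitute showing that norm-one functionals on $(M\otimes F,\pi)$ are implemented by norm-one elements of $\Hv(U,F^*)$), your argument does not close.
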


\begin{proof}
Let $\sum_{i=1}^n\lambda_iv(x_i)\delta_{x_i}\otimes y_i$ be a representation of $\gamma$. Since $\gamma\in\Hv(U,F^*)^*$ and
\begin{align*}
\left|\gamma(f)\right|&=\left|\sum_{i=1}^n\lambda_iv(x_i)\left\langle f(x_i),y_i\right\rangle\right|\leq\sum_{i=1}^n\left|\lambda_i\right|v(x_i)\left\|f(x_i)\right\|\left\|y_i\right\|\\
&=\sum_{i=1}^n\left|\lambda_i\right|v(x_i)\left\|T_f(\delta_{x_i})\right\|\left\|y_i\right\|\leq \left\|f\right\|_v\sum_{i=1}^n\left|\lambda_i\right|v(x_i)\left\|\delta_{x_i}\right\|\left\|y_i\right\|
\end{align*}
for all $f\in\Hv(U,F^*)$, we have that $\left\|\gamma\right\|\leq\sum_{i=1}^n\left|\lambda_i\right|v(x_i) \left\|\delta_{x_i}\right\|\left\|y_i\right\|$. Since this holds for each representation of $\gamma$ as above, it is deduced that $\left\|\gamma\right\|\leq\pi(\gamma)$.  

To prove that $\pi(\gamma)\leq\left\|\gamma\right\|$, suppose by contradiction that $\left\|\gamma\right\|<\pi(\gamma)$. Note that $\gamma\neq 0$ and consider the set 
$$
B=\{\mu\in\lin((v\Delta_v)(U))\otimes F\colon \pi(\mu)\leq \left\|\gamma\right\|\}.
$$
Clearly, $B$ is a closed convex subset of $\lin((v\Delta_v)(U))\otimes_\pi F$. Applying the Hahn--Banach Separation Theorem to $B$ and $\{\gamma\}$, we can take a functional $\eta\in(\lin((v\Delta_v)(U))\otimes_\pi F)^*$ with $\left\|\eta\right\|=1$ such that 
$$
\left\|\gamma\right\|=\sup\{\mathrm{Re}(\eta(\mu))\colon\mu\in B\}<\mathrm{Re}(\eta(\gamma)). 
$$
Define $f_\eta\colon U\to F^*$ by 
$$
\langle f_\eta(x),y\rangle=\eta\left(\delta_x\otimes y\right)\qquad (x\in U,\; y\in F).
$$
We now show that $f_\eta$ is holomorphic. By \cite[Exercise 8.D]{Muj-86}, it suffices to prove that for each $y\in F$, the function $f_{\eta,y}\colon U\to\mathbb{C}$, defined by 
$$
f_{\eta,y}(x)=\eta(\delta_x\otimes y)\qquad (x\in U),
$$
is holomorphic. For it, let $a\in U$ and since $\Delta_v\colon U\to\lin((v\Delta_v)(U))$ is holomorphic, there exists $D\Delta_v(a)\in\mathcal{L}(E,\lin((v\Delta_v)(U)))$ such that 
$$
\lim_{x\to a}\frac{\delta_x-\delta_a-D\Delta_v(a)(x-a)}{\left\|x-a\right\|}=0. 
$$
Define the function $T(a)\colon E\to\mathbb{C}$ by 
$$
T(a)(x)=\eta(D\Delta_v(a)(x)\otimes y)\qquad\left(x\in E\right). 
$$
Clearly, $T(a)$ is linear and  
\begin{align*}
\left|T(a)(x)\right|&=\left|\eta(D\Delta_v(a)(x)\otimes y)\right|\leq \left\|\eta\right\|\pi\left(D\Delta_v(a)(x)\otimes y\right)\\
&\leq \left\|D\Delta_v(a)(x)\right\|\left\|y\right\|\leq \left\|D\Delta_v(a)\right\|\left\|x\right\|\left\|y\right\|
\end{align*}
for all $x\in E$, hence $T(a)\in E^*$. Since  
\begin{align*}
f_{\eta,y}(x)-f_{\eta,y}(a)-T(a)(x-a)
&=\eta(\delta_x\otimes y)-\eta(\delta_a\otimes y)-\eta(D\Delta_v(a)(x-a)\otimes y) \\
&=\eta\left((\delta_x-\delta_a-D\Delta_v(a)(x-a))\otimes y\right)
\end{align*}
for all $x\in U$, it follows that the limit 
$$
\lim_{x\to a}\frac{f_{\eta,y}(x)-f_{\eta,y}(a)-T(a)(x-a)}{\left\|x-a\right\|}
=\lim_{x\to a}\eta\left(\frac{\delta_x-\delta_a-D\Delta_v(a)(x-a)}{\left\|x-a\right\|}\otimes y\right)=0.
$$
Thus $f_{\eta,y}$ is holomorphic at $a$ with $Df_{\eta,y}(a)=T(a)$, as required.

Given $x\in U$, we have  
$$
v(x)|\langle f_\eta(x),y\rangle|
=v(x)\left|\eta\left(\delta_x\otimes y\right)\right|\leq v(x)\left\|\eta\right\|\pi(\delta_x\otimes y)
\leq v(x)\left\|\delta_{x}\right\|\left\|y\right\|\leq\left\|y\right\|
$$
for all $y\in F$, and so $v(x)\left\|f_\eta(x)\right\|\leq 1$. Therefore $f_\eta\in\Hv(U,F^*)$ and $\left\|f_\eta\right\|_v\leq 1$. Furthermore, $\mu(f_\eta)=\eta(\mu)$ for all $\mu\in\lin((v\Delta_v)(U))\otimes F$. Then 
$$
\left\|\gamma\right\|\geq|\gamma(f_\eta)|\geq\mathrm{Re}(\gamma(f_\eta))=\mathrm{Re}(\eta(\gamma)),
$$
and we arrive at a contradiction.
\end{proof}


\subsection{$p$-Chevet--Saphar $\Hv$-norms}

The $p$-Chevet--Saphar norms $d_p$ on the tensor product of two Banach spaces $E\otimes F$ are well known (see, for example, \cite[Section 6.2]{Rya-02}). 

Our study of the duality of the spaces of $p$-summing weighted holomorphic mappings requires the introduction of the following $\Hv$-variants of such norms.

\begin{definition}
Let $E$ and $F$ be complex Banach spaces, let $U$ be an open subset of $E$, let $v$ be a weight on $U$ and let $1\leq p\leq \infty$. For $\gamma\in\lin((v\Delta_v)(U))\otimes F$, define $d^{\Hv}_p(\gamma)$ as 
$$
\inf\left\{\left(\sup_{g\in B_{\Hv(U)}}\left(\sum_{i=1}^n\left|\lambda_i\right|^{p^*}v(x_i)^{p^*}\left|g(x_i)\right|^{p^*}\right)^{\frac{1}{p^*}}\right)\left(\sum_{i=1}^n\left\|y_i\right\|^p\right)^{\frac{1}{p}}\right\}
$$
for any $1<p<\infty$, and
\begin{align*}
d^{\Hv}_1(\gamma)&=\inf\left\{\left(\sup_{g\in B_{\Hv(U)}}\left(\max_{1\leq i\leq n}\left|\lambda_i\right|v(x_i)\left|g(x_i)\right|\right)\right)\left(\sum_{i=1}^n\left\|y_i\right\|\right)\right\},\\
d^{\Hv}_\infty(\gamma)&=\inf\left\{\left(\sup_{g\in B_{\Hv(U)}}\left(\sum_{i=1}^n\left|\lambda_i\right|v(x_i)\left|g(x_i)\right|\right)\right)\left(\max_{1\leq i \leq n}\left\|y_i\right\|\right)\right\},
\end{align*}
with the infimum taken over all representations of $\gamma$ as $\sum_{i=1}^n\lambda_iv(x_i)\delta_{x_i}\otimes y_i$. 
\end{definition}

Motivated by the analogue concept for tensor product spaces, we introduce the following.

\begin{definition}\label{DEF}
A norm $\alpha$ on $\lin((v\Delta_v)(U))\otimes F$ is said to be a $\Hv$-reasonable crossnorm if it enjoys the following properties:
\begin{enumerate}
	\item[(i)] $\alpha(v(x)\delta_x\otimes y)=v(x)\left\|\delta_x\right\|\left\|y\right\|$ for all $x\in U$ and $y\in F$,
	\item[(ii)] For $g\in\Hv(U)$ and $y^*\in F^*$, the linear map $g\otimes y^*\colon\lin((v\Delta_v)(U))\otimes F\to \C$, defined by 
	$$
	(g\otimes y^*)(v(x)\delta_x\otimes y)=v(x)g(x)y^*(y),
	$$
	is bounded on $\lin((v\Delta_v)(U))\otimes_\alpha F$ with $\left\|g\otimes y^*\right\|\leq \left\|g\right\|_v\left\|y^*\right\|$.
\end{enumerate}
\end{definition}

\begin{theorem}\label{teo-che-norms}
$d^{\Hv}_p$ is a $\Hv$-reasonable crossnorm on $\lin((v\Delta_v)(U))\otimes F$ for any $1\leq p\leq\infty$.
\end{theorem}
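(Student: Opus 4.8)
The goal is to verify that $d^{\Hv}_p$ satisfies: (a) it is a norm on $\lin((v\Delta_v)(U))\otimes F$; (b) property (i) of a reasonable $\Hv$-crossnorm, namely $d^{\Hv}_p(v(x)\delta_x\otimes y)=v(x)\|\delta_x\|\|y\|$; and (c) property (ii), the boundedness of $g\otimes y^*$ with the stated norm estimate. I will treat the case $1<p<\infty$ in detail; the boundary cases $p=1$ and $p=\infty$ follow by the same scheme with the obvious modifications of the $\ell_{p}$/$\ell_{p^*}$ expressions into $\max$'s, exactly as in the classical Chevet--Saphar setting.

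\textbf{Step 1: $d^{\Hv}_p$ is a seminorm.} Absolute homogeneity is immediate from the definition: scaling $\gamma$ by $\mu\in\C$ can be absorbed into the coefficients $\lambda_i$ (or the $y_i$), and the two factors in the infimum are each positively homogeneous of the right total degree. For the triangle inequality, given $\gamma_1,\gamma_2$ and representations nearly attaining $d^{\Hv}_p(\gamma_1)$ and $d^{\Hv}_p(\gamma_2)$, I first rescale each representation (multiplying the $\lambda_i$ by a positive constant $t$ and the $y_i$ by $t^{-1}$ leaves $\gamma_j$ unchanged but multiplies the first factor by $t$ and the second by $t^{-1}$) so that in each representation the two factors are equal; this is the standard normalization trick for Chevet--Saphar norms. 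Concatenating the two normalized representations gives a representation of $\gamma_1+\gamma_2$, and an application of the triangle inequality in $\ell_{p^*}$ (for the supremum factor, using that the $\sup_g$ of a sum of $p^*$-norms is controlled accordingly) together with the triangle inequality in $\ell_p$ (for the $\|y_i\|$ factor) yields $d^{\Hv}_p(\gamma_1+\gamma_2)\le d^{\Hv}_p(\gamma_1)+d^{\Hv}_p(\gamma_2)$ after letting the approximation error go to zero.

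\textbf{Step 2: property (i) and positive-definiteness.} For the upper bound $d^{\Hv}_p(v(x)\delta_x\otimes y)\le v(x)\|\delta_x\|\|y\|$, use the one-term representation $\gamma=v(x)\delta_x\otimes y$ (with $\lambda_1=1$): the first factor is $\sup_{g\in B_{\Hv(U)}}v(x)|g(x)|=v(x)\|\delta_x\|$ by Theorem \ref{t0}(ii), and the second is $\|y\|$. The reverse inequality, and simultaneously positive-definiteness (hence that $d^{\Hv}_p$ is a genuine norm), I obtain by comparison with the operator-canonical norm $\|\cdot\|$ on $\lin((v\Delta_v)(U))\otimes F\subseteq\Hv(U,F^*)^*$ computed in Proposition \ref{teo-L}: for any representation $\gamma=\sum_i\lambda_iv(x_i)\delta_{x_i}\otimes y_i$ and any $f\in B_{\Hv(U,F^*)}$, H\"older's inequality in the pairing $\sum_i |\lambda_i|v(x_i)\|f(x_i)\|\|y_i\|$ — splitting the exponents as $1/p^*+1/p$ — gives
\begin{align*}
|\gamma(f)|&\le\sum_{i=1}^n|\lambda_i|v(x_i)\|f(x_i)\|\|y_i\|\\
&\le\left(\sum_{i=1}^n|\lambda_i|^{p^*}v(x_i)^{p^*}\|f(x_i)\|^{p^*}\right)^{\frac{1}{p^*}}\left(\sum_{i=1}^n\|y_i\|^p\right)^{\frac{1}{p}}.
\end{align*}
Since $f\in B_{\Hv(U,F^*)}$ means $v(x_i)\|f(x_i)\|\le 1$ and, more to the point, testing against $f=h\cdot y^*$ with $h\in B_{\Hv(U)}$ and $y^*\in B_{F^*}$ shows $\sup_f\|f(x_i)\|v(x_i)$-type quantities are dominated by $\sup_{g\in B_{\Hv(U)}}v(x_i)|g(x_i)|$; taking the supremum over such $f$ and then the infimum over representations yields $\pi(\gamma)=\|\gamma\|\le d^{\Hv}_p(\gamma)$. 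In particular $d^{\Hv}_p(\gamma)=0\Rightarrow\gamma=0$, so $d^{\Hv}_p$ is a norm; and applied to $\gamma=v(x)\delta_x\otimes y$ this gives $v(x)\|\delta_x\|\|y\|=\|\gamma\|\le d^{\Hv}_p(\gamma)$, completing (i).

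\textbf{Step 3: property (ii).} Fix $g\in\Hv(U)$ and $y^*\in F^*$. For $\gamma=\sum_{i=1}^n\lambda_iv(x_i)\delta_{x_i}\otimes y_i$ compute directly $(g\otimes y^*)(\gamma)=\sum_i\lambda_iv(x_i)g(x_i)y^*(y_i)$, and estimate
\begin{align*}
|(g\otimes y^*)(\gamma)|&\le\sum_{i=1}^n|\lambda_i|v(x_i)|g(x_i)|\,|y^*(y_i)|\\
&\le\|y^*\|\left(\sum_{i=1}^n|\lambda_i|^{p^*}v(x_i)^{p^*}|g(x_i)|^{p^*}\right)^{\frac{1}{p^*}}\left(\sum_{i=1}^n\|y_i\|^p\right)^{\frac{1}{p}}
\end{align*}
by H\"older again. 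If $g\neq 0$, bound the first factor by $\|g\|_v\sup_{h\in B_{\Hv(U)}}(\sum_i|\lambda_i|^{p^*}v(x_i)^{p^*}|h(x_i)|^{p^*})^{1/p^*}$ (applying it to $h=g/\|g\|_v\in B_{\Hv(U)}$); the case $g=0$ is trivial. Taking the infimum over all representations of $\gamma$ gives $|(g\otimes y^*)(\gamma)|\le\|g\|_v\|y^*\|\,d^{\Hv}_p(\gamma)$, i.e. $\|g\otimes y^*\|\le\|g\|_v\|y^*\|$ on $\lin((v\Delta_v)(U))\otimes_{d^{\Hv}_p}F$.

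\textbf{Main obstacle.} The routine parts are the two H\"older estimates and the rescaling-and-concatenate argument for the triangle inequality. The only genuinely delicate point is the verification that $d^{\Hv}_p$ is positive-definite: an abstract infimum of products could a priori collapse to zero. The clean way around this, carried out in Step 2, is to sandwich $d^{\Hv}_p$ between the operator-canonical norm $\|\cdot\|=\pi(\cdot)$ (which is already known to be a norm by Proposition \ref{teo-L}) from below and the projective-type quantity from above; the lower bound $\|\gamma\|\le d^{\Hv}_p(\gamma)$ is exactly what makes the positivity work and also delivers the nontrivial half of crossnorm property (i). One should take a little care that the supremum defining the first factor of $d^{\Hv}_p$ (taken over scalar-valued $g\in B_{\Hv(U)}$) correctly dominates the corresponding quantity with vector-valued $f\in B_{\Hv(U,F^*)}$ that appears when testing $\gamma$ as a functional — this is where the factorization $f=h\cdot y^*$ through scalar functions, together with $\|h\cdot y^*\|_v=\|h\|_v\|y^*\|$, is used.
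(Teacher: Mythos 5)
Your Step 3 and the easy half of property (i) agree with the paper's proof, but two steps of your plan do not work as written, and one of them is fatal to the route you chose.

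The serious problem is in Step 2. Your chain requires, for every $f\in B_{\Hv(U,F^*)}$, the bound $\left(\sum_{i=1}^n|\lambda_i|^{p^*}v(x_i)^{p^*}\|f(x_i)\|^{p^*}\right)^{1/p^*}\leq\sup_{g\in B_{\Hv(U)}}\left(\sum_{i=1}^n|\lambda_i|^{p^*}v(x_i)^{p^*}|g(x_i)|^{p^*}\right)^{1/p^*}$, and your justification via $f=h\cdot y^*$ goes the wrong way: testing against elementary tensors only controls $\sup\{|\gamma(h\cdot y^*)|\colon h\in B_{\Hv(U)},\,y^*\in B_{F^*}\}$, not $\pi(\gamma)=\sup\{|\gamma(f)|\colon f\in B_{\Hv(U,F^*)}\}$. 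A vector-valued $f$ can nearly norm all the $\delta_{x_i}$ simultaneously (e.g.\ with $F^*\supseteq\ell_\infty^2$ take $f=g_1\cdot e_1+g_2\cdot e_2$ with $g_1,g_2\in B_{\Hv(U)}$ nearly norming $\delta_{x_1}$, resp.\ $\delta_{x_2}$), while no single scalar $g\in B_{\Hv(U)}$ need do so; hence the displayed inequality, and with it your claim $\pi(\gamma)\leq d^{\Hv}_p(\gamma)$ for all molecules, is false in general. Indeed, by the paper's own duality results (Theorem \ref{messi-10} and Corollary \ref{theo-dual-classic}), $\pi\leq d^{\Hv}_p$ on all of $\lin((v\Delta_v)(U))\otimes F$ would yield $\Hv(U,F^*)=\Pi^{\Hv}_{p^*}(U,F^*)$ with $\pi^{\Hv}_{p^*}\leq\|\cdot\|_v$ for every $F$, i.e.\ the summing classes would always trivialize. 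What is true, and what the paper uses, is precisely the weaker estimate $|\gamma(g\cdot y^*)|\leq d^{\Hv}_p(\gamma)$ for $g\in B_{\Hv(U)}$, $y^*\in B_{F^*}$ (your H\"older computation restricted to these test functions); this suffices for positive definiteness, because such functionals separate molecules (the paper reduces to a linearly independent family $(v(x_i)\delta_{x_i})$ and uses that $B_{F^*}$ separates the points of $F$ to get $\lambda_iy_i=0$), and it also gives the missing half of (i), since $\|v(x)\delta_x\otimes y\|=v(x)\|\delta_x\|\|y\|$ is attained on functions of the form $g_x\cdot y^*$ (Proposition \ref{prop-navidad}). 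So your conclusions are salvageable, but not by the inequality you assert.

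The second gap is the triangle inequality in Step 1. Normalizing each representation so that its two factors are \emph{equal} is not the standard Chevet--Saphar trick and does not suffice: after concatenation the product of the two factors is only bounded by $\left(c_1^{p^*/2}+c_2^{p^*/2}\right)^{1/p^*}\left(c_1^{p/2}+c_2^{p/2}\right)^{1/p}$ with $c_j\approx d^{\Hv}_p(\gamma_j)$, and by H\"older this quantity is $\geq c_1+c_2$, strictly so unless $c_1=c_2$ or $p=2$ (for $p=4$, $c_1=4$, $c_2=1$ it is about $5.2>5$). The correct normalization scales the $j$-th representation so that its first factor equals $c_j^{1/p^*}$ and its second equals $c_j^{1/p}$; then concatenation gives at most $(c_1+c_2)^{1/p^*}(c_1+c_2)^{1/p}=c_1+c_2$. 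Equivalently, argue as the paper does: concatenate with free scaling parameters $r,s$, bound the product of the factors by Young's inequality, and optimize $r,s$.
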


\begin{proof}
We will only prove it for $1<p<\infty$. The other cases follow similarly. 

Let $\gamma\in\lin((v\Delta_v)(U))\otimes F$ and let $\sum_{i=1}^n\lambda_iv(x_i)\delta_{x_i}\otimes y_i$ be a representation of $\gamma$. Clearly, $d^{\Hv}_p(\gamma)\geq 0$. Given $\lambda\in\C$, since $\sum_{i=1}^n (\lambda\lambda_i)v(x_i)\delta_{x_i}\otimes y_i$ is a representation of $\lambda\gamma$, we have 
\begin{align*}
d^{\Hv}_p(\lambda\gamma)
&\leq\left(\sup_{g\in B_{\Hv(U)}}\left(\sum_{i=1}^n\left|\lambda\lambda_i\right|^{p^*}v(x_i)^{p^*}\left|g(x_i)\right|^{p^*}\right)^{\frac{1}{p^*}}\right)\left(\sum_{i=1}^n\left\|y_i\right\|^p\right)^{\frac{1}{p}}\\
&=\left|\lambda\right|\left(\sup_{g\in B_{\Hv(U)}}\left(\sum_{i=1}^n\left|\lambda_i\right|^{p^*}v(x_i)^{p^*}\left|g(x_i)\right|^{p^*}\right)^{\frac{1}{p^*}}\right)\left(\sum_{i=1}^n\left\|y_i\right\|^p\right)^{\frac{1}{p}}.
\end{align*}
If $\lambda=0$, we obtain $d^{\Hv}_p(\lambda\gamma)=0=\left|\lambda\right|d^{\Hv}_p(\gamma)$. For $\lambda\neq 0$, since the preceding inequality holds for every representation of $\gamma$, we deduce that $d^{\Hv}_p(\lambda\gamma)\leq\left|\lambda\right|d^{\Hv}_p(\gamma)$. For the converse inequality, note that $d^{\Hv}_p(\gamma)=d^{\Hv}_p(\lambda^{-1}(\lambda\gamma))\leq |\lambda^{-1}|d^{\Hv}_p(\lambda\gamma)$ by using the proved inequality, thus $\left|\lambda\right|d^{\Hv}_p(\gamma)\leq d^{\Hv}_p(\lambda\gamma)$ and hence $d^{\Hv}_p(\lambda\gamma)=\left|\lambda\right|d^{\Hv}_p(\gamma)$.

To prove the triangular inequality of $d^{\Hv}_p$, let $\gamma_1,\gamma_2\in\lin((v\Delta_v)(U))\otimes F$ and let $\varepsilon>0$. If $\gamma_1=0$ or $\gamma_2=0$, there is nothing to prove. Assume $\gamma_1\neq 0\neq \gamma_2$. We can choose representations 
$$
\gamma_1=\sum_{i=1}^n\lambda_{1,i}v(x_{1,i})\delta_{x_{1,i}}\otimes y_{1,i},\qquad
\gamma_2=\sum_{i=1}^m\lambda_{2,i}v(x_{2,i})\delta_{x_{2,i}}\otimes y_{2,i},
$$
so that
$$
\left(\sup_{g\in B_{\Hv(U)}}\left(\sum_{i=1}^n\left|\lambda_{1,i}\right|^{p^*}v(x_{1,i})^{p^*}\left|g(x_{1,i})\right|^{p^*}\right)^{\frac{1}{p^*}}\right)\left(\sum_{i=1}^n\left\|y_{1,i}\right\|^p\right)^{\frac{1}{p}}
\leq d^{\Hv}_p(\gamma_1)+\varepsilon
$$
and 
$$
\left(\sup_{g\in B_{\Hv(U)}}\left(\sum_{i=1}^m\left|\lambda_{2,i}\right|^{p^*}v(x_{2,i})^{p^*}\left|g(x_{2,i})\right|^{p^*}\right)^{\frac{1}{p^*}}\right)\left(\sum_{i=1}^m\left\|y_{2,i}\right\|^p\right)^{\frac{1}{p}}
\leq d^{\Hv}_p(\gamma_2)+\varepsilon .
$$
Fix arbitrary $r,s\in\mathbb{R}^+$ and define 
\begin{align*}
\lambda_{3,i}v(x_{3,i})\delta_{x_{3,i}}
&=\left\{\begin{array}{lll}
r^{-1}\lambda_{1,i}v(x_{1,i})\delta_{x_{1,i}}&\text{ if } i=1,\ldots,n,\\
s^{-1}\lambda_{2,i-n}v(x_{2,i-n})\delta_{x_{2,i-n}}&\text{ if } i=n+1,\ldots,n+m,
\end{array}\right.\\
y_{3,i}&=\left\{\begin{array}{lll}
ry_{1,i}&\text{ if } i=1,\ldots,n,\\
sy_{2,i-n}&\text{ if } i=n+1,\ldots,n+m.
\end{array}\right.
\end{align*}
It is clear that $\gamma_1+\gamma_2=\sum_{i=1}^{n+m}\lambda_{3,i}v(x_{3,i})\delta_{x_{3,i}}\otimes y_{3,i}$ and thus we have
$$
d^{\Hv}_p(\gamma_1+\gamma_2)
\leq \left(\sup_{g\in B_{\Hv(U)}}\left(\sum_{i=1}^{n+m}\left|\lambda_{3,i}\right|^{p^*}v(x_{3,i})^{p^*}\left|g(x_{3,i})\right|^{p^*}\right)^{\frac{1}{p^*}}\right)\left(\sum_{i=1}^{n+m}\left\|y_{3,i}\right\|^p\right)^{\frac{1}{p}}.
$$
A simple proof gives  
\begin{align*}
&\left(\sup_{g\in B_{\Hv(U)}}\left(\sum_{i=1}^{n+m}\left|\lambda_{3,i}\right|^{p^*}v(x_{3,i})^{p^*}\left|g(x_{3,i})\right|^{p^*}\right)^{\frac{1}{p^*}}\right)^{p^*}\\
&\leq\left(r^{-1}\sup_{g\in B_{\Hv(U)}}\left(\sum_{i=1}^{n}\left|\lambda_{1,i}\right|^{p^*}v(x_{1,i})^{p^*}\left|g(x_{1,i})\right|^{p^*}\right)^{\frac{1}{p^*}}\right)^{p^*}+\left(s^{-1}\sup_{g\in B_{\Hv(U)}}\left(\sum_{i=1}^{m}\left|\lambda_{2,i}\right|^{p^*}v(x_{2,i})^{p^*}\left|g(x_{2,i})\right|^{p^*}\right)^{\frac{1}{p^*}}\right)^{p^*}
\end{align*}
and
$$
\sum_{i=1}^{n+m}\left\|y_{3,i}\right\|^p
=r^{p}\sum_{i=1}^{n}\left\|y_{1,i}\right\|^p+s^p\sum_{i=1}^{m}\left\|y_{2,i}\right\|^p.
$$
Using Young's Inequality, it follows that 
\begin{align*}
d^{\Hv}_p(\gamma_1+\gamma_2)
&\leq\frac{1}{p^*}\left(\sup_{g\in B_{\Hv(U)}}\left(\sum_{i=1}^{n+m}\left|\lambda_{3,i}\right|^{p^*}v(x_{3,i})^{p^*}\left|g(x_{3,i})\right|^{p^*}\right)^{\frac{1}{p^*}}\right)^{p^*}
+\frac{1}{p}\sum_{i=1}^{n+m}\left\|y_{3,i}\right\|^p\\
&\leq\frac{r^{-p^*}}{p^*}\left(\sup_{g\in B_{\Hv(U)}}\left(\sum_{i=1}^{n}\left|\lambda_{1,i}\right|^{p^*}v(x_{1,i})^{p^*}\left|g(x_{1,i})\right|^{p^*}\right)^{\frac{1}{p^*}}\right)^{p^*}
+\frac{r^p}{p}\sum_{i=1}^{n}\left\|y_{1,i}\right\|^p\\
&+\frac{s^{-p^*}}{p^*}\left(\sup_{g\in B_{\Hv(U)}}\left(\sum_{i=1}^{m}\left|\lambda_{2,i}\right|^{p^*}v(x_{2,i})^{p^*}\left|g(x_{2,i})\right|^{p^*}\right)^{\frac{1}{p^*}}\right)^{p^*}
+\frac{s^p}{p}\sum_{i=1}^{m}\left\|y_{2,i}\right\|^p.
\end{align*}
Since $r,s$ were arbitrary in $\R^+$, taking above
\begin{align*}
r&=(d^{\Hv}_p(\gamma_1)+\varepsilon)^{-\frac{1}{p^*}}\left(\sup_{g\in B_{\Hv(U)}}\left(\sum_{i=1}^{n}\left|\lambda_{1,i}\right|^{p^*}v(x_{1,i})^{p^*}\left|g(x_{1,i})\right|^{p^*}\right)^{\frac{1}{p^*}}\right),\\
s&=(d^{\Hv}_p(\gamma_2)+\varepsilon)^{-\frac{1}{p^*}}\left(\sup_{g\in B_{\Hv(U)}}\left(\sum_{i=1}^{m}\left|\lambda_{2,i}\right|^{p^*}v(x_{2,i})^{p^*}\left|g(x_{2,i})\right|^{p^*}\right)^{\frac{1}{p^*}}\right),
\end{align*}
we obtain that $d^{\Hv}_p(\gamma_1+\gamma_2)\leq d^{\Hv}_p(\gamma_1)+d^{\Hv}_p(\gamma_2)+2\varepsilon$, and thus 
$$
d^{\Hv}_p(\gamma_1+\gamma_2)\leq d^{\Hv}_p(\gamma_1)+d^{\Hv}_p(\gamma_2)
$$
by the arbitrariness of $\varepsilon$. Hence $d^{\Hv}_p$ is a seminorm. To prove that it is a norm,  we can assume without loss of generality that the set $(v\Delta _{v})(U)\subseteq\Gv(U)$ is linearly independent. Using the H\"{o}lder's inequality, note first that  
\begin{align*}
&\left\vert \sum_{i=1}^{n}\lambda _{i}v(x_{i})g\left( x_{i}\right) y^*(y_{i})\right\vert 
\leq \sum_{i=1}^{n}\left|\lambda_{i}\right| v(x_{i})\left|g(x_i)\right|\left\vert y^*(y_{i})\right\vert \\
&\leq \left\|g\right\|_v\left\|y^*\right\|\sum_{i=1}^n\left|\lambda_i\right|v(x_i)\left|\left(\frac{g}{\left\|g\right\|_v}\right)(x_i)\right|\left|\left(\frac{y^*}{\left\|y^*\right\|}\right)(y_i)\right|\\
&\leq \left\|g\right\|_v\left\|y^*\right\|\left(\sum_{i=1}^n\left|\lambda_i\right|^{p^*}v(x_i)^{p^*}\left|\left(\frac{g}{\left\|g\right\|_v}\right)(x_i)\right|^{p^*}\right)^{\frac{1}{p^*}}\left(\sum_{i=1}^n\left|\left(\frac{y^*}{\left\|y^*\right\|}\right)(y_i)\right|
^p\right)^{\frac{1}{p}}\\
&\leq \left\|g\right\|_v\left\|y^*\right\|\sup_{h\in B_{\Hv(U)}}\left(\sum_{i=1}^n\left|\lambda_i\right|^{p^*}v(x_i)^{p^*}\left|h(x_i)\right|^{p^*}\right)^{\frac{1}{p^*}}\left(\sum_{i=1}^n\left\|y_i\right\|^p\right)^{\frac{1}{p}}.
\end{align*}
for any $0\neq g\in B_{\Hv(U)}$ and $0\neq y^*\in B_{F^{\ast}}$ (for $g=0$ or $y^*=0$, there is nothing to prove). Note that the quantity $\sum_{i=1}^{n}\lambda _{i}v(x_{i})g\left(x_{i}\right) y^*(y_{i})$ does not depend on the representation of $\gamma $ because
\begin{equation*}
\sum_{i=1}^{n}\lambda _{i}v(x_{i})g\left( x_{i}\right) y^{\ast
}(y_{i})=\left( \sum_{i=1}^{n}\lambda _{i}v(x_{i})\delta _{x_{i}}\otimes
y_{i}\right) \left( g\cdot y^*\right) =\gamma \left( g\cdot y^{\ast
}\right).
\end{equation*}
Passing to the infimum over all representations of $\gamma $, we deduce that 
\begin{equation*}
\left|\sum_{i=1}^{n}\lambda _{i}v(x_{i})g\left( x_{i}\right) y^*(y_{i})\right|\leq d_p^{\Hv}\left( \gamma \right) ,
\end{equation*}
for any $g\in B_{\Hv(U)}$ and $y^*\in B_{F^{\ast
}}$. Now, if $d_p^{\Hv }\left( \gamma
\right) =0$, we have 
\begin{equation*}
\left( \sum_{i=1}^{n}\lambda _{i}v(x_{i})y^*\left( y_{i}\right) \delta_{x_{i}}\right) \left( g\right) =\sum_{i=1}^{n}\lambda _{i}v(x_{i})g\left(x_{i}\right) y^*(y_{i})=0,
\end{equation*}
for any $g\in B_{\Hv(U)}$ and $y^*\in B_{F^{\ast}}$. For each $y^*\in B_{F^*}$, it follows that $\sum_{i=1}^{n}\lambda _{i}v(x_{i})y^*\left( y_{i}\right) \delta_{x_{i}}=0$, and thus $\lambda _{i}y^*(y_i)=0$ for all $i\in\left\{ 1,\ldots,n\right\} $. Therefore, $\lambda _{i}y_{i}=0$ for all $i\in \left\{ 1,\ldots,n\right\}$, since $B_{F^*}$ separates the points of $F$. Consequently, $\sum_{i=1}^{n}\lambda _{i}v(x_{i})\delta _{x_{i}}\otimes y_{i}=0$, and thus $\gamma=0$.

Finally, we show that $d^{\Hv}_p$ is a $\Hv$-reasonable crossnorm on the tensor space $\lin((v\Delta_v)(U))\otimes F$. Given $g\in\Hv(U)$ and $y^*\in F^*$,  as before we have 
\begin{align*}
\left|(g\otimes y^*)(\gamma)\right|
&=\left|\sum_{i=1}^n\lambda_i(g\otimes y^*)(v(x_i)\delta_{x_i}\otimes y_i)\right|
=\left|\sum_{i=1}^n\lambda_i v(x_i)g(x_i)y^*(y_i)\right|\\
&\leq \left\|g\right\|_v\left\|y^*\right\|\sup_{h\in B_{\Hv(U)}}\left(\sum_{i=1}^n\left|\lambda_i\right|^{p^*}v(x_i)^{p^*}\left|h(x_i)\right|^{p^*}\right)^{\frac{1}{p^*}}\left(\sum_{i=1}^n\left\|y_i\right\|^p\right)^{\frac{1}{p}}.
\end{align*}
Taking infimum over all the representations of $\gamma$, we deduce that 
$$
\left|(g\otimes y^*)(\gamma)\right|\leq \left\|g\right\|_v\left\|y^*\right\|d^{\Hv}_p(\gamma).
$$
Hence $g\otimes y^*\in (\lin((v\Delta_v)(U))\otimes_{d^{\Hv}_p} F)^*$ with $\left\|g\otimes y^*\right\|\leq \left\|g\right\|_v\left\|y^*\right\|$, and thus $d^{\Hv}_p$ holds Condition $(ii)$ in Definition \ref{DEF}. To prove Condition $(i)$, given $x\in U$ and $y\in F$, we have
$$
d^{\Hv}_p(v(x)\delta_x\otimes y)\leq\left(\sup_{g\in B_{\Hv(U)}}v(x)\left|g(x)\right|\right)\left\|y\right\|=v(x)\|\delta_x\|\|y\|,
$$
and, conversely, taking $y^*\in B_{Y^*}$ so that $|y^*(y)|=||y||$, we conclude that  
\begin{align*}
d^{\Hv}_p(v(x)\delta_x\otimes y)
&\geq\left\|g_x\otimes y^*\right\|d^{\Hv}_p(v(x)\delta_x\otimes y)
\geq\left|(g_x\otimes y^*)(v(x)\delta_x\otimes y)\right|\\
&=\left|v(x)g_x(x)y^*(y)\right|=v(x)\left\|\delta_x\right\|\left\|y\right\|.
\end{align*}
\end{proof}

We now compute $d^{\Hv}_p$ with a simpler formula for $p=1$ and $p=\infty$. In fact, the $1$-Chevet--Saphar $\Hv$-norm coincides with the projective norm.

\begin{proposition}\label{1-nuclear-proj}
For $\gamma\in\lin((v\Delta_v)(U))\otimes F$, we have
$$
d^{\Hv}_1(\gamma)=\inf\left\{\sum_{i=1}^n\left|\lambda_i\right|v(x_i)\left\|\delta_{x_i}\right\|\left\|y_i\right\|\right\}
$$ 
and 
$$
d^{\Hv}_\infty(\gamma)=\inf\left\{\sup_{g\in B_{\Hv(U)}}\left(\sum_{i=1}^n\left|\lambda_i\right|v(x_i)\left|g(x_i)\right|\left\|y_i\right\|\right)\right\},
$$
with the infimum taken over all representations of $\gamma$ as $\sum_{i=1}^n\lambda_iv(x_i)\delta_{x_i}\otimes y_i$. 
\end{proposition}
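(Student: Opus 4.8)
The plan is to reduce both identities to the elementary rescaling observation that a tensor $\gamma=\sum_{i=1}^n\lambda_iv(x_i)\delta_{x_i}\otimes y_i$ in $\lin((v\Delta_v)(U))\otimes F$ admits, for any scalars $\mu_1,\dots,\mu_n>0$, the alternative representation $\gamma=\sum_{i=1}^n(\mu_i\lambda_i)v(x_i)\delta_{x_i}\otimes(\mu_i^{-1}y_i)$, together with the trivial remark that any summand equal to $0$ may be deleted from a representation. After such a deletion every surviving summand $\lambda_iv(x_i)\delta_{x_i}\otimes y_i$ satisfies $\lambda_i\neq0$, $y_i\neq0$ and $\delta_{x_i}\neq0$, and since $v>0$ this forces $|\lambda_i|v(x_i)\|\delta_{x_i}\|>0$ and $\|y_i\|>0$; if no summand survives then $\gamma=0$ and both asserted identities are trivial.

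For $p=1$ I would first record that, for fixed $n$, $\lambda_1,\dots,\lambda_n\in\C$ and $x_1,\dots,x_n\in U$, interchanging the supremum with the finite maximum gives
\[
\sup_{g\in B_{\Hv(U)}}\Bigl(\max_{1\le i\le n}|\lambda_i|v(x_i)|g(x_i)|\Bigr)=\max_{1\le i\le n}|\lambda_i|v(x_i)\sup_{g\in B_{\Hv(U)}}|g(x_i)|=\max_{1\le i\le n}|\lambda_i|v(x_i)\|\delta_{x_i}\|,
\]
by the definition of $\|\delta_{x_i}\|$ in Theorem \ref{t0}(ii). Granting this, the inequality $\inf\{\sum_i|\lambda_i|v(x_i)\|\delta_{x_i}\|\|y_i\|\}\le d^{\Hv}_1(\gamma)$ follows from $\sum_i a_ib_i\le(\max_i a_i)(\sum_i b_i)$ applied with $a_i=|\lambda_i|v(x_i)\|\delta_{x_i}\|$ and $b_i=\|y_i\|$, after passing to the infimum over all representations of $\gamma$. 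For the reverse inequality, fix a representation, delete its null summands, and choose $\mu_i>0$ so that the numbers $|\mu_i\lambda_i|v(x_i)\|\delta_{x_i}\|$ are all equal to a common value $\alpha$; evaluating $d^{\Hv}_1$ on the rescaled representation gives $d^{\Hv}_1(\gamma)\le\alpha\sum_i\mu_i^{-1}\|y_i\|=\sum_i|\lambda_i|v(x_i)\|\delta_{x_i}\|\|y_i\|$, and the infimum over representations yields $d^{\Hv}_1(\gamma)\le\inf\{\sum_i|\lambda_i|v(x_i)\|\delta_{x_i}\|\|y_i\|\}$.

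The case $p=\infty$ is dual, in the sense that the scalar factor and the vector factor exchange roles. For every $g\in B_{\Hv(U)}$ one has $\sum_i|\lambda_i|v(x_i)|g(x_i)|\|y_i\|\le(\max_j\|y_j\|)\sum_i|\lambda_i|v(x_i)|g(x_i)|$; taking the supremum in $g$ and then the infimum over representations gives $\inf\{\sup_{g\in B_{\Hv(U)}}\sum_i|\lambda_i|v(x_i)|g(x_i)|\|y_i\|\}\le d^{\Hv}_\infty(\gamma)$. Conversely, fix a representation, delete its null summands, and rescale with $\mu_i=\|y_i\|$; then $\mu_i^{-1}y_i$ is a unit vector for each $i$, so $\max_i\|\mu_i^{-1}y_i\|=1$, while $\sup_{g\in B_{\Hv(U)}}\sum_i|\mu_i\lambda_i|v(x_i)|g(x_i)|=\sup_{g\in B_{\Hv(U)}}\sum_i|\lambda_i|v(x_i)|g(x_i)|\|y_i\|$; hence $d^{\Hv}_\infty(\gamma)$ is at most the latter quantity, and the infimum over representations completes the argument.

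No step presents a genuine difficulty; the only points that deserve attention are the interchange of supremum and finite maximum used for $p=1$, and the preliminary removal of the degenerate summands (those with a vanishing scalar, a vanishing vector, or a null point evaluation $\delta_{x_i}$), which is needed so that the multipliers $\mu_i$ in the rescaling are well defined and strictly positive.
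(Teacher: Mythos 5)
Your proof is correct. For the $d^{\Hv}_\infty$ formula and for the inequality $\inf\bigl\{\sum_i|\lambda_i|v(x_i)\|\delta_{x_i}\|\|y_i\|\bigr\}\leq d^{\Hv}_1(\gamma)$ your argument coincides with the paper's (the paper uses the same interchange of the supremum over $B_{\Hv(U)}$ with the finite maximum, and the same normalization $y_i/\|y_i\|$ in the $p=\infty$ case). The genuine difference is in the reverse inequality $d^{\Hv}_1(\gamma)\leq\sum_i|\lambda_i|v(x_i)\|\delta_{x_i}\|\|y_i\|$: the paper deduces it from the fact that $d^{\Hv}_1$ is a reasonable $\Hv$-crossnorm (triangle inequality plus $d^{\Hv}_1(v(x)\delta_x\otimes y)=v(x)\|\delta_x\|\|y\|$, i.e.\ Theorem \ref{teo-che-norms}), whereas you rescale a fixed representation so that the numbers $|\mu_i\lambda_i|v(x_i)\|\delta_{x_i}\|$ are all equal and evaluate the defining expression of $d^{\Hv}_1$ on the rescaled representation. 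Your route is more self-contained — it does not invoke Theorem \ref{teo-che-norms}, whose $p=1$ case the paper only asserts to ``follow similarly'' — at the cost of the preliminary bookkeeping you rightly carry out (deleting degenerate summands so that $|\lambda_i|v(x_i)\|\delta_{x_i}\|>0$ and the multipliers $\mu_i$ are well defined); the paper's version is shorter once the crossnorm theorem is available. Note also that the paper phrases the $p=1$ statement as $d^{\Hv}_1(\gamma)=\pi(\gamma)$ with $\pi$ as in Proposition \ref{teo-L}, but this is exactly the infimum you work with, so the two statements agree.
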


\begin{proof}
Let $\gamma\in\lin((v\Delta_v)(U))\otimes F$ and let $\sum_{i=1}^n\lambda_iv(x_i)\delta_{x_i}\otimes y_i$ be a representation of $\gamma$. We have 
\begin{align*}
\pi(\gamma)&\leq\sum_{i=1}^n\left|\lambda_i\right|v(x_i)\left\|\delta_{x_i}\right\|\left\|y_i\right\|
=\sum_{i=1}^n\left|\lambda_i\right|v(x_i)\left(\sup_{g\in B_{\Hv(U)}}\left|g(x_i)\right|\right)\left\|y_i\right\|\\
&\leq\sum_{i=1}^n\max_{1\leq i\leq n}\left(\left|\lambda_i\right|v(x_i)\sup_{g\in B_{\Hv(U)}}\left|g(x_i)\right|\right)\left\|y_i\right\|\\
&=\left(\max_{1\leq i\leq n}\left(\left|\lambda_i\right|v(x_i)\sup_{g\in B_{\Hv(U)}}\left|g(x_i)\right|\right)\right)\left(\sum_{i=1}^n\left\|y_i\right\|\right)\\
&=\left(\sup_{g\in B_{\Hv(U)}}\left(\max_{1\leq i\leq n}\left|\lambda_i\right|v(x_i)\left|g(x_i)\right|\right)\right)\left(\sum_{i=1}^n\left\|y_i\right\|\right)
\end{align*}
and therefore $\pi(\gamma)\leq d^{\Hv}_1(\gamma)$. Conversely, since $d^{\Hv}_1$ is a crossnorm, we have 
$$
d^{\Hv}_1(\gamma)
\leq \sum_{i=1}^n\left|\lambda_i\right|v(x_i)d^{\Hv}_1(\delta_{x_i}\otimes y_i)
=\sum_{i=1}^n\left|\lambda_i\right|v(x_i)\left\|\delta_{x_i}\right\|\left\|y_i\right\|,
$$
and thus $d^{\Hv}_1(\gamma)\leq\pi(\gamma)$. 

For the second equality we note that  
$$
\sup_{g\in B_{\Hv(U)}}\left(\sum_{i=1}^n\left|\lambda_i\right|v(x_i)\left|g(x_i)\right|\left\|y_i\right\|\right)
\leq \left(\max_{1\leq i\leq n}\left\|y_i\right\|\right)\sup_{g\in B_{\Hv(U)}}\left(\sum_{i=1}^n\left|\lambda_i\right|v(x_i)\left|g(x_i)\right|\right),
$$
and taking the infimum over all representations of $\gamma$ gives  
$$
\inf\left\{\sup_{g\in B_{\Hv(U)}}\left(\sum_{i=1}^n\left|\lambda_i\right|v(x_i)\left|g(x_i)\right|\left\|y_i\right\|\right)\right\}\leq d^{\Hv}_\infty(\gamma).
$$
Conversely, we can assume without loss of generality that $y_i\neq 0$ for all $i\in\{1,\ldots,n\}$ and since $\gamma=\sum_{i=1}^n\lambda_iv(x_i)\left\|y_i\right\|\delta_{x_i}\otimes (y_i/\left\|y_i\right\|)$, we obtain 
$$
d^{\Hv}_\infty(\gamma)\leq\sup_{g\in B_{\Hv(U)}}\left(\sum_{i=1}^n\left|\lambda_i\right|v(x_i)\left\|y_i\right\|\left|g(x_i)\right|\right),
$$
and taking the infimum over all representations of $\gamma$, we conclude that 
$$
d^{\Hv}_\infty(\gamma)\leq\inf\left\{\sup_{g\in B_{\Hv}}\left(\sum_{i=1}^n\left|\lambda_i\right|v(x_i)\left|g(x_i)\right|\left\|y_i\right\|\right)\right\}.
$$
\end{proof}


\subsection{Duality}\label{4}

Given $p\in [1,\infty]$, we will show that the dual space of $\lin((v\Delta_v)(U))\widehat{\otimes}_{d^{\Hv}_{p^*}} F$ can be canonically identified as the space of $p$-summing weighted holomorphic mappings from $U$ to $F^*$.

\begin{theorem}\label{messi-10}\label{theo-dual-classic-00} 
For $1\leq p\leq\infty$, the space $(\Pi^{\Hv}_p(U,F^*),\pi^{\Hv}_p)$ is isometrically isomorphic to $(\lin((v\Delta_v)(U))\widehat{\otimes}_{d^{\Hv}_{p^*}} F)^*$, via the mapping $\Lambda\colon\Pi^{\Hv}_p(U,F^*)\to(\lin((v\Delta_v)(U))\widehat{\otimes}_{d^{\Hv}_{p^*}} F)^*$ defined by 
$$
\Lambda(f)(\gamma)=\sum_{i=1}^n\lambda_iv(x_i)\left\langle f(x_i),y_i\right\rangle 
$$
for $f\in\Pi^{\Hv}_p(U,F^*)$ and $\gamma=\sum_{i=1}^n\lambda_i v(x_i)\delta_{x_i}\otimes y_i\in\lin((v\Delta_v)(U))\otimes F$. Furthermore, its inverse comes given by 
$$
\left\langle \Lambda^{-1}(\varphi)(x),y\right\rangle=\varphi(\delta_x\otimes y) 
$$
for $\varphi\in(\lin((v\Delta_v)(U))\widehat{\otimes}_{d^{\Hv}_{p^*}} F)^*$, $x\in U$ and $y\in F$.
\end{theorem}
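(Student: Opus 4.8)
The plan is to carry out the proof for $1<p<\infty$ (so that $(p^{*})^{*}=p$) and to note that the cases $p=1$ and $p=\infty$ run the same way after replacing the $\ell_{p^*}$- and $\ell_p$-expressions by the maxima appearing in $d^{\Hv}_\infty$ and $d^{\Hv}_1$, invoking Proposition \ref{1-nuclear-proj}. First I would check that $\Lambda$ is well defined, linear and contractive. By construction each molecule $\gamma=\sum_{i=1}^n\lambda_iv(x_i)\delta_{x_i}\otimes y_i$ is an element of $\Hv(U,F^*)^*$ acting by $\gamma(f)=\sum_{i=1}^n\lambda_iv(x_i)\langle f(x_i),y_i\rangle$ independently of the representation, so $\Lambda(f)(\gamma):=\gamma(f)$ is unambiguous and linear in $\gamma$. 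For $f\in\Pi^{\Hv}_p(U,F^*)$, Hölder's inequality with exponents $p,p^*$ together with Definition \ref{def-p-summing weighted holomorphic} give
\begin{align*}
|\Lambda(f)(\gamma)|
&\leq\sum_{i=1}^n|\lambda_i|v(x_i)\|f(x_i)\|\|y_i\|\\
&\leq\left(\sum_{i=1}^n|\lambda_i|^pv(x_i)^p\|f(x_i)\|^p\right)^{\frac{1}{p}}\left(\sum_{i=1}^n\|y_i\|^{p^*}\right)^{\frac{1}{p^*}}\\
&\leq\pi^{\Hv}_p(f)\left(\sup_{g\in B_{\Hv(U)}}\left(\sum_{i=1}^n|\lambda_i|^pv(x_i)^p|g(x_i)|^p\right)^{\frac{1}{p}}\right)\left(\sum_{i=1}^n\|y_i\|^{p^*}\right)^{\frac{1}{p^*}},
\end{align*}
and passing to the infimum over all representations of $\gamma$ yields $|\Lambda(f)(\gamma)|\leq\pi^{\Hv}_p(f)\,d^{\Hv}_{p^*}(\gamma)$. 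Hence $\Lambda(f)$ extends uniquely to an element of $(\lin((v\Delta_v)(U))\widehat{\otimes}_{d^{\Hv}_{p^*}}F)^*$ of norm at most $\pi^{\Hv}_p(f)$, and $\Lambda$ is linear.

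Next I would produce the candidate inverse. Fix $\varphi\in(\lin((v\Delta_v)(U))\widehat{\otimes}_{d^{\Hv}_{p^*}}F)^*$. For each $x\in U$ the map $y\mapsto\varphi(\delta_x\otimes y)$ is linear, and since $d^{\Hv}_{p^*}$ is a reasonable $\Hv$-crossnorm (Theorem \ref{teo-che-norms}) one has $|\varphi(\delta_x\otimes y)|\leq\|\varphi\|\,d^{\Hv}_{p^*}(\delta_x\otimes y)=\|\varphi\|\,\|\delta_x\|\,\|y\|$; thus it defines an element $f(x):=\Lambda^{-1}(\varphi)(x)\in F^*$ with $\|f(x)\|\leq\|\varphi\|\,\|\delta_x\|$, so $v(x)\|f(x)\|\leq\|\varphi\|$ by Theorem \ref{t0}(ii). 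That $f$ is holomorphic is proved exactly as in Proposition \ref{teo-L}: by \cite[Exercise 8.D]{Muj-86} it suffices that $x\mapsto\varphi(\delta_x\otimes y)$ be holomorphic for each fixed $y$, and its derivative at $a\in U$ is obtained from the holomorphy of $\Delta_v$ as $x\mapsto\varphi(D\Delta_v(a)(x)\otimes y)$. Hence $f\in\Hv(U,F^*)$ with $\|f\|_v\leq\|\varphi\|$.

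The heart of the argument is to show $f\in\Pi^{\Hv}_p(U,F^*)$ with $\pi^{\Hv}_p(f)\leq\|\varphi\|$. Fix $n\in\N$, $\lambda_1,\dots,\lambda_n\in\C$ and $x_1,\dots,x_n\in U$; by the $\ell_p^n$ versus $\ell_{p^*}^n$ duality it is enough to bound $\sum_{i=1}^nb_i|\lambda_i|v(x_i)\|f(x_i)\|$ for every $b_1,\dots,b_n\geq0$ with $\sum_ib_i^{p^*}\leq1$. Given $\varepsilon>0$, for each $i$ pick $y_i\in B_F$ and a unimodular $\omega_i\in\C$ with $\omega_i\langle f(x_i),y_i\rangle=|\langle f(x_i),y_i\rangle|\geq(1-\varepsilon)\|f(x_i)\|$, and consider the molecule
$$
\gamma=\sum_{i=1}^n(|\lambda_i|\omega_i)\,v(x_i)\delta_{x_i}\otimes(b_iy_i)\in\lin((v\Delta_v)(U))\otimes F.
$$
From this representation and $\sum_i\|b_iy_i\|^{p^*}\leq\sum_ib_i^{p^*}\leq1$ we get $d^{\Hv}_{p^*}(\gamma)\leq\sup_{g\in B_{\Hv(U)}}(\sum_{i=1}^n|\lambda_i|^pv(x_i)^p|g(x_i)|^p)^{1/p}$, while $\varphi(\gamma)=\sum_{i=1}^nb_i|\lambda_i|v(x_i)|\langle f(x_i),y_i\rangle|\geq0$. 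Therefore
$$
(1-\varepsilon)\sum_{i=1}^nb_i|\lambda_i|v(x_i)\|f(x_i)\|\leq\varphi(\gamma)\leq\|\varphi\|\,d^{\Hv}_{p^*}(\gamma)\leq\|\varphi\|\sup_{g\in B_{\Hv(U)}}\left(\sum_{i=1}^n|\lambda_i|^pv(x_i)^p|g(x_i)|^p\right)^{\frac{1}{p}};
$$
taking the supremum over the $b_i$'s and then letting $\varepsilon\to0$ gives the $p$-summing inequality, so $f\in\Pi^{\Hv}_p(U,F^*)$ with $\pi^{\Hv}_p(f)\leq\|\varphi\|$.

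Finally I would verify that $\Lambda$ and $\Lambda^{-1}$ are mutually inverse. If $f=\Lambda^{-1}(\varphi)$, then for every molecule $\gamma$ one has $\Lambda(f)(\gamma)=\sum_i\lambda_iv(x_i)\varphi(\delta_{x_i}\otimes y_i)=\varphi(\gamma)$, so $\Lambda(f)$ and $\varphi$ agree on the dense subspace $\lin((v\Delta_v)(U))\otimes F$, hence $\Lambda(f)=\varphi$; and $\langle\Lambda^{-1}(\Lambda(f))(x),y\rangle=\Lambda(f)(\delta_x\otimes y)=\langle f(x),y\rangle$ for all $x,y$, so $\Lambda^{-1}(\Lambda(f))=f$. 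Combining the two norm estimates, $\pi^{\Hv}_p(f)=\pi^{\Hv}_p(\Lambda^{-1}(\Lambda(f)))\leq\|\Lambda(f)\|\leq\pi^{\Hv}_p(f)$, which forces equality, so $\Lambda$ is a surjective linear isometry. I expect the main obstacle to be the crux estimate $\pi^{\Hv}_p(f)\leq\|\varphi\|$: one must choose the representation of the auxiliary molecule $\gamma$ so that the weights $b_i$ sit in the $F$-slot (making $\sum\|b_iy_i\|^{p^*}\leq1$) while the scalars $\lambda_i$ and the phases $\omega_i$ sit in the $\Gv(U)$-slot, so that simultaneously $d^{\Hv}_{p^*}(\gamma)$ is controlled by the right-hand side of the summing inequality and $\varphi(\gamma)$ reconstructs $\sum_ib_i|\lambda_i|v(x_i)\|f(x_i)\|$; a secondary point needing care is the holomorphy of $\Lambda^{-1}(\varphi)$, handled via the differentiability argument of Proposition \ref{teo-L}.
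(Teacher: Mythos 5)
Your proposal is correct and follows essentially the same route as the paper: the same Hölder estimate giving $\left|\Lambda(f)(\gamma)\right|\leq\pi^{\Hv}_p(f)\,d^{\Hv}_{p^*}(\gamma)$, the same construction of the inverse via $\langle f_\varphi(x),y\rangle=\varphi(\delta_x\otimes y)$ with holomorphy borrowed from Proposition \ref{teo-L}, and the same key estimate $\pi^{\Hv}_p(f_\varphi)\leq\|\varphi\|$ obtained from $\ell_p^n$--$\ell_{p^*}^n$ duality by feeding $\varphi$ an auxiliary molecule whose scalar coefficients sit in the $\Gv(U)$-slot and whose $F$-slot carries the weights $b_iy_i$ with $\sum_i\|b_iy_i\|^{p^*}\leq1$. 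The only cosmetic differences are that you absorb phases $\omega_i$ and use $(1-\varepsilon)$-norming $y_i\in B_F$, whereas the paper uses exactly norming $y_i$ with $\|y_i\|\leq1+\varepsilon$ and packages the duality step as a functional on $(\C^n,\|\cdot\|_{p^*})$.
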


\begin{proof}
We prove it for $1<p<\infty$. The cases $p=1$ and $p=\infty$ can be proved similarly.

Let $f\in\Pi^{\Hv}_p(U,F^*)$ and let $\Lambda_0(f)\colon\lin((v\Delta_v)(U))\otimes F\to\mathbb{C}$ be the linear functional given by 
$$
\Lambda_0(f)(\gamma)=\sum_{i=1}^n\lambda_iv(x_i)\left\langle f(x_i),y_i\right\rangle 
$$
for $\gamma=\sum_{i=1}^n\lambda_iv(x_i)\delta_{x_i}\otimes y_i\in\lin((v\Delta_v)(U))\otimes F$. Note that the functional $\Lambda_0(f)$ belongs to $(\lin((v\Delta_v)(U))\otimes_{d^{\Hv}_{p^*}} F)^*$ with 
$\left\|\Lambda_0(f)\right\|\leq \pi^{\Hv}_p(f)$ since  
\begin{align*}
\left|\Lambda_0(f)(\gamma)\right|
&=\left|\sum_{i=1}^n\lambda_iv(x_i)\left\langle f(x_i), y_i\right\rangle\right|\leq\sum_{i=1}^n\left|\lambda_i\right|v(x_i)\left\|f(x_i)\right\|\left\|y_i\right\|\\
&\leq \left(\sum_{i=1}^n\left|\lambda_i\right|^pv(x_i)^p\left\|f(x_i)\right\|^p\right)^{\frac{1}{p}}\left(\sum_{i=1}^n\left\|y_i\right\|^{p^*}\right)^{\frac{1}{p^*}}\\
&\leq \pi^{\Hv}_p(f)\sup_{g\in B_{\Hv(U)}}\left(\sum_{i=1}^n\left|\lambda_i\right|^pv(x_i)^p\left|g(x_i)\right|^p\right)^{\frac{1}{p}}\left(\sum_{i=1}^n\left\|y_i\right\|^{p^*}\right)^{\frac{1}{p^*}},
\end{align*}
and taking the infimum over all such representations of $\gamma$, we deduce the inequality $\left|\Lambda_0(f)(\gamma)\right|\leq \pi^{\Hv}_p(f)d^{\Hv}_{p^*}(\gamma)$. Since $\gamma$ was arbitrary, then $\Lambda_0(f)$ is continuous on $\lin((v\Delta_v)(U))\otimes_{d^{\Hv}_{p^*}} F$ with $\left\|\Lambda_0(f)\right\|\leq \pi^{\Hv}_p(f)$.

Hence there is a unique complex-valued continuous function $\Lambda(f)$ defined on the tensor product space $\lin((v\Delta_v)(U))\widehat{\otimes}_{d^{\Hv}_{p^*}} F$ that extends $\Lambda_0(f)$. Further, $\Lambda(f)$ is linear and $\left\|\Lambda(f)\right\|=\left\|\Lambda_0(f)\right\|$. Let $\Lambda$ be the mapping from $\Pi^{\Hv}_p(U,F^*)$ into $(\lin((v\Delta_v)(U))\widehat{\otimes}_{d^{\Hv}_{p^*}} F)^*$ so defined. 

Clearly, $\Lambda_0\colon\Pi^{\Hv}_p(U,F^*)\to(\lin((v\Delta_v)(U))\otimes_{d^{\Hv}_{p^*}} F)^*$ is linear. To show its injectivity, if $f\in\Pi^{\Hv}_p(U,F^*)$ and $\Lambda_0(f)=0$, then $v(x)\left\langle f(x),y\right\rangle=\Lambda_0(f)(v(x)\delta_x\otimes y)=0$ for all $x\in U$ and $y\in F$, hence $f(x)=0$ for all $x\in U$ and thus $f=0$. Then $\Lambda$ is also linear and injective. Indeed, let $\phi\in\lin((v\Delta_v)(U))\widehat{\otimes}_{d^{\Hv}_{p^*}} F$ and let $(\gamma_n)_{n\geq 1}$ be a sequence in $\lin((v\Delta_v)(U))\otimes_{d^{\Hv}_{p^*}} F$ such that $d^{\Hv}_{p^*}(\gamma_n-\phi)\to 0$ when $n\to +\infty$. Given $a,b\in\C$ and $f,g\in\Pi^{\Hv}_p(U,F^*)$, an easy calculation shows that
\begin{align*}
\Lambda(af+bg)(\gamma_n)&=\Lambda_0(af+bg)(\gamma_n)=(a\Lambda_0(f)+b\Lambda_0(g))(\gamma_n)\\
&=(a\Lambda(f)+b\Lambda(g))(\gamma_n)
\end{align*}
for all $n\in\N$, and taking limits with $n\to +\infty$, we have that 
$$
\Lambda(af+bg)(\phi)=(a\Lambda(f)+b\Lambda(g))(\phi).
$$ 
Hence $\Lambda$ is linear. For the injectivity of $\Lambda$, note that if $f\in\Pi^{\Hv}_p(U,F^*)$ and $\Lambda(f)=0$, then $\Lambda_0(f)=0$ which implies that $f=0$ by the injectivity of $\Lambda_0$. 

To see that $\Lambda$ is a surjective isometry, let $\varphi\in(\lin((v\Delta_v)(U))\widehat{\otimes}_{d^{\Hv}_{p^*}} F)^*$ and define $f_\varphi\colon U\to F^*$ by 
$$
\langle f_\varphi(x),y\rangle=\varphi(\delta_x\otimes y)\qquad\left(x\in U,\; y\in F\right). 
$$
As in the proof of Proposition \ref{teo-L}, it is similarly proved that $f_\varphi\in\Hv(U,F^*)$ with $\left\|f_\varphi\right\|_v\leq\left\|\varphi\right\|$.

We now prove that $f_\varphi\in\Pi^{\Hv}_p(U,F^*)$. Fix $n\in\mathbb{N}$, $\lambda_1,\ldots,\lambda_n\in\mathbb{C}$ and $x_1,\ldots,x_n\in U$. Let $\varepsilon>0$. For each $i\in\{1,\ldots,n\}$, there exists $y_i\in F$ with $\left\|y_i\right\|\leq 1+\varepsilon$ such that $\left\langle f_\varphi(x_i),y_i\right\rangle=\left\|f_\varphi(x_i)\right\|$. Clearly, the function $T\colon\C^n\to\C$, defined by 
$$
T(t_1,\ldots,t_n)=\sum_{i=1}^n t_i \lambda_i v(x_i)\left\|f_\varphi(x_i)\right\|,\qquad\forall (t_1,\ldots,t_n)\in\C^n,
$$
is linear and continuous on $(\C^n,||\cdot||_{p^*})$ with 
$$
\left\|T\right\|=\left(\sum_{i=1}^n\left|\lambda_i\right|^pv(x_i)^p\left\|f_\varphi(x_i)\right\|^{p}\right)^{\frac{1}{p}}.
$$ 
For any $(t_1,\ldots,t_n)\in\C^n$ with $||(t_1,\ldots,t_n)||_{p^*}\leq 1$, we have
\begin{align*}
\left|T(t_1,\ldots,t_n)\right|
&=\left|\varphi\left(\sum_{i=1}^n t_i\lambda_iv(x_i)\delta_{x_i}\otimes y_i\right)\right|\\
&\leq \left\|\varphi\right\|d^{\Hv}_{p^*}\left(\sum_{i=1}^n\lambda_iv(x_i)\delta_{x_i}\otimes t_iy_i\right)\\
&\leq\left\|\varphi\right\|\left(\sup_{g\in B_{\Hv(U)}}\left(\sum_{i=1}^n\left|\lambda_i\right|^{p}v(x_i)^p\left|g(x_i)\right|^{p}\right)^{\frac{1}{p}}\right)\left(\sum_{i=1}^n\left\|t_iy_i\right\|^{p^*}\right)^{\frac{1}{p^*}}\\
&\leq(1+\varepsilon)\left\|\varphi\right\|\sup_{g\in B_{\Hv(U)}}\left(\sum_{i=1}^n\left|\lambda_i\right|^{p}v(x_i)^p\left|g(x_i)\right|^{p}\right)^{\frac{1}{p}},
\end{align*}
therefore 
$$
\left(\sum_{i=1}^n\left|\lambda_i\right|^pv(x_i)^p\left\|f_\varphi(x_i)\right\|^{p}\right)^{\frac{1}{p}}
\leq(1+\varepsilon)\left\|\varphi\right\|\sup_{g\in B_{\Hv(U)}}\left(\sum_{i=1}^n\left|\lambda_i\right|^{p}v(x_i)^p\left|g(x_i)\right|^{p}\right)^{\frac{1}{p}},
$$
and since $\varepsilon$ was arbitrary, it follows that 
$$
\left(\sum_{i=1}^n\left|\lambda_i\right|^pv(x_i)^p\left\|f_\varphi(x_i)\right\|^{p}\right)^{\frac{1}{p}}
\leq\left\|\varphi\right\|\sup_{g\in B_{\Hv(U)}}\left(\sum_{i=1}^n\left|\lambda_i\right|^{p}v(x_i)^p\left|g(x_i)\right|^{p}\right)^{\frac{1}{p}},
$$
and we conclude that $f_\varphi\in\Pi^{\Hv}_p(U,F^*)$ with $\pi^{\Hv}_p(f_\varphi)\leq\left\|\varphi\right\|$. 

Finally, for any $\gamma=\sum_{i=1}^n \lambda_i v(x_i)\delta_{x_i}\otimes y_i\in\lin((v\Delta_v)(U))\otimes F$, we get 
\begin{align*}
\Lambda(f_\varphi)(\gamma)
&=\sum_{i=1}^n\lambda_iv(x_i)\left\langle f_\varphi(x_i),y_i\right\rangle 
=\sum_{i=1}^n\lambda_iv(x_i)\varphi(\delta_{x_i}\otimes y_i)\\
&=\varphi\left(\sum_{i=1}^n\lambda_iv(x_i)\delta_{x_i}\otimes y_i\right) =\varphi(\gamma). 
\end{align*}
Hence $\Lambda(f_\varphi)=\varphi$ on a dense subspace of $\lin((v\Delta_v)(U))\widehat{\otimes}_{d^{\Hv}_{p^*}} F$ and we conclude that $\Lambda(f_\varphi)=\varphi$. This also justifies the last statement of the theorem. Moreover, $\pi^{\Hv}_p(f_\varphi)\leq\left\|\varphi\right\|=\left\|\Lambda(f_\varphi)\right\|$.
\end{proof}

In light of Theorem \ref{messi-10} and taking into account Propositions \ref{prop-1}, \ref{teo-L} and \ref{1-nuclear-proj}, we can identify the space $\Hv(U,F^*)$ with a dual space.

\begin{corollary}\label{theo-dual-classic} 
The space $(\Hv(U,F^*),\|\cdot\|_v)$ is isometrically isomorphic to $(\lin((v\Delta_v)(U))\widehat{\otimes}_{\|\cdot\|} F)^*$, via $\Lambda\colon\Hv(U,F^*)\to(\lin((v\Delta_v)(U))\widehat{\otimes}_{\|\cdot\|} F)^*$ given by 
$$
\Lambda(f)(\gamma)=\sum_{i=1}^n\lambda_iv(x_i)\left\langle f(x_i),y_i\right\rangle 
$$
for $f\in\Hv(U,F^*)$ and $\gamma=\sum_{i=1}^n\lambda_iv(x_i)\delta_{x_i}\otimes y_i\in\lin((v\Delta_v)(U))\otimes F$, with inverse  
$$
\left\langle \Lambda^{-1}(\varphi)(x),y\right\rangle=\varphi(\delta_x\otimes y) 
$$
for $\varphi\in(\lin((v\Delta_v)(U))\widehat{\otimes}_{\|\cdot\|} F)^*$, $x\in U$ and $y\in F$. $\hfill\qed$
\end{corollary}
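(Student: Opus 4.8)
The plan is to derive this corollary as the endpoint instance $p=\infty$ of Theorem \ref{messi-10}, after assembling the three auxiliary identifications cited just before the statement. First I would use Proposition \ref{prop-1}, which gives the equality of normed spaces $(\Pi^{\Hv}_\infty(U,F^*),\pi^{\Hv}_\infty)=(\Hv(U,F^*),\|\cdot\|_v)$; this handles the left-hand side. Since the conjugate index of $p=\infty$ is $p^*=1$, Theorem \ref{messi-10} supplies an isometric isomorphism $\Lambda\colon\Hv(U,F^*)\to(\lin((v\Delta_v)(U))\widehat{\otimes}_{d^{\Hv}_1}F)^*$ together with precisely the formula for $\Lambda$ and the formula for $\Lambda^{-1}$ appearing in the corollary. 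So the only thing left to check is that the tensor norm $d^{\Hv}_1$ may be replaced by the operator canonical norm $\|\cdot\|$ on $\lin((v\Delta_v)(U))\otimes F$.

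For that replacement I would chain Propositions \ref{1-nuclear-proj} and \ref{teo-L}: the former asserts $d^{\Hv}_1=\pi$ (the projective norm) on $\lin((v\Delta_v)(U))\otimes F$, while the latter asserts $\pi(\gamma)=\|\gamma\|$ on that same space, where $\|\gamma\|$ is the norm of $\gamma$ viewed as an element of $\Hv(U,F^*)^*$. Hence $d^{\Hv}_1$ and $\|\cdot\|$ agree as norms on $\lin((v\Delta_v)(U))\otimes F$, and consequently $\lin((v\Delta_v)(U))\widehat{\otimes}_{d^{\Hv}_1}F=\lin((v\Delta_v)(U))\widehat{\otimes}_{\|\cdot\|}F$ with identical norms; in particular the completion occurring in Theorem \ref{messi-10} is literally the completion occurring in the corollary.

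Combining the two steps, $\Lambda$ becomes an isometric isomorphism from $(\Hv(U,F^*),\|\cdot\|_v)$ onto $(\lin((v\Delta_v)(U))\widehat{\otimes}_{\|\cdot\|}F)^*$, and the explicit descriptions of $\Lambda$ and $\Lambda^{-1}$ carry over verbatim from Theorem \ref{messi-10}, since the expression $\Lambda(f)(\gamma)=\sum_{i=1}^n\lambda_iv(x_i)\langle f(x_i),y_i\rangle$ and the relation $\langle\Lambda^{-1}(\varphi)(x),y\rangle=\varphi(\delta_x\otimes y)$ do not refer to the tensor norm at all. I do not anticipate a real obstacle: the only point needing a word of care is that the case $p=\infty$ of Theorem \ref{messi-10} is genuinely available (it is stated for $1\le p\le\infty$, the proof noting that the endpoints are analogous) and that under the convention $p^*=1$ for $p=\infty$ the norm produced is $d^{\Hv}_1$, which is exactly the norm computed in Proposition \ref{1-nuclear-proj}.
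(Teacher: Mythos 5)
Your proposal is correct and matches the paper's own argument: the corollary is exactly the $p=\infty$ instance of Theorem \ref{messi-10}, with Proposition \ref{prop-1} identifying $(\Pi^{\Hv}_\infty(U,F^*),\pi^{\Hv}_\infty)$ with $(\Hv(U,F^*),\|\cdot\|_v)$ and Propositions \ref{1-nuclear-proj} and \ref{teo-L} giving $d^{\Hv}_1=\pi=\|\cdot\|$ on $\lin((v\Delta_v)(U))\otimes F$, so the completions and the formulas for $\Lambda$ and $\Lambda^{-1}$ carry over unchanged. The paper states this as an immediate consequence of those same three results, so your route is the intended one.
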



\section*{Declarations} 


\noindent\textbf{Funding}. This research was partially supported by Junta de Andaluc\'ia grant FQM194, and by grant PID2021-122126NB-C31 funded by ``ERDF A way of making Europe'' and MCIN/AEI/ 10.13039/501100011033.\\

\noindent\textbf{Conflict of interest}. The authors have no relevant financial or non-financial interests to disclose.\\

\noindent\textbf{Competing interests}. The authors declare no competing interests.


\end{document}